\theoremstyle{plain}
\newtheorem{thm}{Theorem}[section]
\newtheorem{lem}[thm]{Lemma}
\newtheorem{prop}[thm]{Proposition}
\newtheorem{corol}{Corollary}[section]
\theoremstyle{definition}
\theoremstyle{remark}
\newcommand{\pr}{\mathsf{P}}
\newcommand{\dd}{\mathrm{d}}
\newcommand{\vecop}{\operatorname{vec}}
\newcommand{\veps}{\varepsilon}
\newcommand{\tr}{\operatorname{tr}}
\newcommand{\etr}{\operatorname{etr}}
\newcommand{\diag}{\operatorname{diag}}
\newcommand{\tsp}{\mathsf{T}}
\newcommand{\rN}{\mathcal{N}}
\newcommand{\rW}{\mathcal{W}}
\newcommand{\rM}{\mathcal{M}}
\newcommand{\emax}{\gamma_{\max}}
\newcommand{\emin}{\gamma_{\min}}
\newcommand{\Alpha}{\mathcal{A}}
\newcommand{\Beta}{\mathcal{B}}
\newcommand{\R}[1]{\mathbb{R}^{#1}}
\newcommand{\SPD}[1]{\mathbb{S}^{#1}_{++}}
\newcommand{\SPSD}[1]{\mathbb{S}^{#1}_{+}}
\begin{document}
\title{\vspace{-1cm}Convergence Analysis of a Collapsed Gibbs Sampler for Bayesian Vector
 Autoregressions}

\author{Karl Oskar Ekvall \\
  \normalsize{Division of Biostatistics, Institute of Environmental Medicine,
  Karolinska Institute\footnote{Substantial parts of the work was done while the
  author was at the University of Minnesota and Vienna University of
  Technology.}} \\
   \normalsize{{\tt karl.oskar.ekvall@ki.se}}
   \and Galin L. Jones \\
   \normalsize{School of Statistics, University of Minnesota} \\
   \normalsize{{\tt galin@umn.edu}}
}
\date{}
\maketitle

\begin{abstract}
  We study the convergence properties of a collapsed Gibbs sampler for Bayesian
  vector autoregressions with predictors, or exogenous variables. The Markov
  chain generated by our algorithm is shown to be geometrically ergodic
  regardless of whether the number of observations in the underlying vector
  autoregression is small or large in comparison to the order and dimension of
  it. In a convergence complexity analysis, we also give conditions for when
  the geometric ergodicity is asymptotically stable as the number of
  observations tends to infinity. Specifically, the geometric convergence rate
  is shown to be bounded away from unity asymptotically, either almost surely
  or with probability tending to one, depending on what is assumed about the
  data generating process. This result is one of the first of its kind for
  practically relevant Markov chain Monte Carlo algorithms. Our
  convergence results hold under close to arbitrary model misspecification.
\end{abstract}

\newpage

\section{Introduction}
\label{sec:intro}
Markov chain Monte Carlo (MCMC) is often used to explore the posterior
distribution of a vector of parameters $\theta$ given data $\mathcal{D}$. To
ensure the reliability of an analysis using MCMC it is essential to understand the
convergence properties of the chain in use \citep{Doss.etal2014,
Flegal.etal2008, Flegal.Jones2011, Geyer1992, Jones.Hobert2001,
Latuszynski.etal2013, Vats.etal2019} and, accordingly, there are numerous
articles establishing such properties for different MCMC algorithms
\citep[e.g.][]{Abrahamsen.Hobert2017, Backlund.Hobert2020, Hobert.etal2018,
Jarner.Hansen2000, Johnson.Jones2015, Jones.Hobert2004, Qin.Hobert2019,
Roberts.Tweedie1996, Vats2017}. It has been common in this literature to treat
the data $\mathcal D$ as fixed, or realized. Thus, the model for how the data
are generated has typically been important only insofar as it determines the
likelihood function based on an arbitrary realization---the stochastic
properties of the data prescribed by that model have not been emphasized. This
is natural since the target distribution, i.e. the posterior distribution,
treats the data as fixed. On the other hand, due to the rapid growth of data
available in applications, it is also desirable to understand how performance is
affected as the number of observations increases. When this happens, the data
are more naturally thought of as stochastic; each time the sample size increases
by one, the additional observation is randomly generated. The study of how
convergence properties of MCMC algorithms are affected by changes in the data is
known as convergence complexity analysis \citep{Rajaratnam.Sparks2015} and it
has attracted increasing attention recently \citep{Johndrow.etal2018,
Qin.Hobert2019, Qin.Hobert2019b, Yang.Rosenthal2019, Yang.etal2016}.

Accounting for randomness in the data and letting the sample size grow leads to
a more complicated analysis than when the data are fixed. In fact, to date,
convergence complexity analysis has only been successfully carried out for a
few, maybe even one \cite{Qin.Hobert2019}, practically relevant MCMC algorithms,
by which we mean MCMC algoritms for settings where one could not easily sample
by less complicated methods. We study a MCMC algorithm for a fundamental model
in time series analysis: a Bayesian vector autoregression with predictors
(VARX). Briefly, the VARX we consider assumes that $Y_t \in \R{r}$ and $X_t\in
\R{p}$ satisfy, for $t = 1, \dots, n$,
\begin{equation}\label{eq:varx}
  Y_t = \sum_{i = 1}^q\Alpha_i^\tsp Y_{t - i} + \Beta^\tsp X_t + U_t
\end{equation}
with $U_1, \dots, U_n$ independent and multivariate normally distributed with
mean zero and common covariance matrix $\Sigma \in \SPD{r}$, the set of $r\times
r$ symmetric and positive definite (SPD) matrices, $\Alpha_i \in \R{r\times r}$,
$i = 1, \dots, q$, and $\Beta \in \R{p\times r}$. As we will see, the likelihood
for the VARX is closely related to that of a multivariate linear regression and,
consequently, our results apply also to that model. Moreover, \eqref{eq:varx}
simplifies to a multivariate linear regression if $\Alpha_i = 0$ for all $i$. We
focus on the VARX because it motivates our study and, in particular, our choice
of priors. More details on the model specification, priors, and resulting
posterior distributions are given in Section~\ref{sec:BVARX}.

The target distribution of our algorithm is the posterior distribution of
$\theta = (\Alpha_1, \dots, \Alpha_q, \Beta, \Sigma)$ given $\mathcal D =
\{(Y_1, X_1), \dots, (Y_n, X_n)\}$. We will consider both fixed and growing data
and refer to the two settings as the small-$n$ and large-$n$ setting,
respectively. By $n$ being small we mean that it is fixed and possibly small in
comparison to $r$ and $q$, but $n > p$ throughout. Many large VARs in the
literature \citep{Banbura.etal2009, Ghosh.etal2018, Koop2013} are covered by
this setting. By $n$ being large we mean that it is increasing and that the data
are stochastic.

The algorithm we consider is a collapsed Gibbs sampler that simplifies to a
commonly considered (non-collapsed) Gibbs sampler when there are no predictors
in the model; that is, when $\Beta \equiv 0$ in \eqref{eq:varx}. To discuss our
results, we require some more notation. Let $F(\cdot |\mathcal{D})$ denote the
VARX posterior distribution having density $f(\theta | \mathcal{D})$ with
support on $\Theta \subseteq \R{d}$ for some $d \geq 1$ and let $K^h$ ($K \equiv
K^1$) be the $h$-step transition kernel for a Markov chain with state space
$\Theta$, started at a point $\theta \in \Theta$. We assume throughout that all
discussed Markov chains are irreducible, aperiodic, and Harris recurrent
\citep{Meyn.Tweedie2011}, and that sets on which measures are defined are
equipped with their Borel $\sigma$-algebra. Our analysis is focused on
convergence rates in total variation distance, by which we mean the rate at
which $\Vert K^h(\theta, \cdot) - F(\cdot | \mathcal{D})\Vert_{TV}$ approaches
zero as $h$ tends to infinity, where $\Vert \cdot\Vert_{TV}$ denotes the total
variation norm. If this convergence happens at a geometric (or exponential)
rate, meaning there exist a $\rho \in [0, 1)$ and an $M:\Theta \to [0, \infty)$
such that for every $\theta \in \Theta$ and $h \in \{1, 2, \dots\}$
\begin{equation} \label{eq:geo_erg}
 \Vert K^h(\theta, \cdot) - F(\cdot | \mathcal{D}) \Vert_{TV} \leq
 M(\theta)\rho^h,
\end{equation}
then the Markov chain, or the kernel $K$, is said to be geometrically ergodic.
The geometric convergence rate $\rho^\star$ is the infimum of the set of $\rho
\in [0, 1]$ such that \eqref{eq:geo_erg} holds \citep{Qin.Hobert2019}. Since all
probability measures have unit total variation norm, $\rho^\star$ is always in
$[0, 1]$, and $K$ is geometrically ergodic if and only if $\rho^\star < 1$. A
substantial part of the literature on convergence of MCMC algorithms is centered
around geometric ergodicity, for good reasons: under moment conditions, a
central limit theorem (CLT) holds for functionals of geometrically ergodic
Markov chains \citep{Chan.Geyer1994, Jones2004} and the variance in the
asymptotic distribution given by that CLT can be consistently estimated
\citep{Flegal.Jones2010,Jones.etal2006, Vats.etal2018}, allowing principled
methods for ensuring reliability of the results \citep{Robertson.etal2019,
Vats.etal2019}. Our main result in the small-$n$ setting gives conditions that
ensure $\rho^\star < 1$ when the data are fixed and $K$ is the kernel
corresponding to the algorithm we consider.

Notice that, although it is suppressed in the notation, $K$, $M$, $\rho$, and,
hence, $\rho^\star$ typically depend on $\mathcal{D}$. In the large-$n$ setting,
we are no longer considering a single dataset, but a sequence of datasets
$\{\mathcal{D}_n\} := \{\mathcal{D}_1, \mathcal{D}_2, \dots\}$, where
$\mathcal{D}_n$ here denotes a dataset with $n$ observations. Consequently, for
every $n$ there is a posterior distribution $F(\cdot | \mathcal{D}_n)$ and a
Markov chain with kernel $K_n$ that is used to explore it. To each kernel $K_n$
there also corresponds a geometric convergence rate $\rho^\star_n$. Since
$\rho^\star_n$ depends on $\mathcal{D}_n$, the sequence $\{\rho^\star_n\}$ is
now one of random variables, ignoring possible issues with measurability. If the
convergence rate $\rho_n^\star$ tends to one in probability or almost surely as
$n\to \infty$, we say that the convergence rate is unstable; in practice, we
expect an algorithm that generates a chain with geometric convergence rate
tending to one to be less reliable when applied to large datasets. Thus, we are
interested in bounding $\{\rho^\star_n\}$ away from unity asymptotically, in
either one of two senses: first, if there exists a sequence of random variables
$\{\bar{\rho}_n\}$ such that $\rho^\star_n \leq \bar{\rho}_n$ for every $n$ and
$\limsup_{n\to \infty}\bar{\rho}_n < 1$ almost surely, then we say that
$\{K_n\}$ is asymptotically geometrically ergodic almost surely, or the
geometric ergodicity is asymptotically stable almost surely. Secondly, if
instead of the upper limit being less than unity almost surely it holds that
$\lim_{n \to \infty}\pr(\bar{\rho}_n < 1) = 1$, then we say that $\{K_n\}$ is
asymptotically geometrically ergodic in probability, or that the geometric
ergodicity is asymptotically stable in probability. In the large-$n$ setting, we
give conditions for asymptotically stable geometric ergodicity, in both of the
two senses, of the Markov chain generated by our algorithm.  An intuitive,
albeit somewhat loose, interpretation of our main results is that the geometric
ergodicity is asymptotically stable if the sample covariance matrix
$n^{-1}\sum_{t = 1}^n[Y_t^\tsp, \dots, Y_{t - q}^\tsp, X_t^\tsp][Y_t^\tsp,
\dots, Y_{t - q}^\tsp, X_t^\tsp]^\tsp$ tends to some positive definite limit. In
particular, as long as this holds, the geometric ergodicity of the Markov chain
we study is asymptotically stable under arbitrary model misspecification. Our
small- and large-$n$ results are complementary: the small-$n$ results establish geometric
ergodicity for fixed $n$ but do not ensure asymptotic geometric ergodicity,
while the laarge-$n$ results give asymptotic geometric ergodicity but do not imply
geometric ergodicity for fixed $n$.

The rest of the paper is organized as follows. We begin in
Section~\ref{sec:BVARX} by completing the specification of the model and priors.
Because some of the priors may be improper we derive conditions which guarantee
the posterior exists and is proper. In Section \ref{sec:collapsed} we develop a
collapsed Gibbs sampler for exploring the posterior. Conditions for geometric
ergodicity for small $n$ are presented in Section \ref{sec:geo_erg} and
conditions for asymptotically stable geometric ergodicity are given in Section
\ref{sec:asy_stab}. Some concluding remarks are given in Section
\ref{sec:var_disc}.

\section{Bayesian vector autoregression with predictors} \label{sec:BVARX}
Recall the definition of the VARX in \eqref{eq:varx}. To complete the
specification, we assume that the starting point $(Y_{- q + 1}, \dots, Y_0)$ is
non-stochastic and known and that the predictors are strongly exogenous. By the
latter we mean that $\{X_t\}$ is independent of $\{U_t\}$ and has a distribution
that does not depend on the model parameters. With these assumptions the
following lemma is straightforward. Its proof is provided in Appendix
\ref{app:main} for completeness. Let $Y = [Y_1, \dots, Y_n]^\tsp \in \R{n \times
r}$, $X = [X_1, \dots, X_n]^\tsp \in \R{n \times p}$, $Z_t = [Y_{t - 1}^{\tsp},
\dots, Y_{t - q}^\tsp]^\tsp \in \R{qr}$, $t = 1, \dots, n$, and $Z = [Z_1,
\dots, Z_n]^\tsp \in \R{n \times qr}$. Let also $\Alpha = [\Alpha_1^\tsp, \dots,
\Alpha_q^\tsp]^\tsp \in \R{qr \times r}$ and $\alpha = \vecop(\Alpha)$, where
$\vecop(\cdot)$ is the vectorization operator, stacking the columns of its
matrix argument.

\begin{lem} \label{lem:joint_dens}
 The joint density for $n$ observations in the VARX is $f(Y, X\mid \Alpha, \Beta,
\Sigma) = f(X)f(Y\mid X, \Alpha, \Beta, \Sigma)$ with $f(Y \mid X, \Alpha,
\Beta, \Sigma)$ given by
 \begin{align*}
  \begin{split}
   (2\pi)^{-nr/2} \vert \Sigma\vert^{-n/2}\etr\left[-\frac{1}{2}\Sigma^{-1} (Y -
   Z\Alpha - X\Beta)^\tsp(Y - Z\Alpha - X\Beta)\right]
  \end{split},
 \end{align*}
 where $\etr(\cdot) = \exp(\tr(\cdot))$.
\end{lem}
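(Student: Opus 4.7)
The plan is to factor the joint density using strong exogeneity, then handle the conditional density of $Y$ given $X$ by chaining together the one-step-ahead predictive densities and finally converting a sum over $t$ into a trace.

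First I would use the strong exogeneity assumption. Since $\{X_t\}$ is independent of $\{U_t\}$ and has a distribution that does not depend on $(\Alpha,\Beta,\Sigma)$, the joint density of $(Y,X)$ factors as $f(X)f(Y\mid X,\Alpha,\Beta,\Sigma)$, which handles the first factor in the statement. What remains is to derive the explicit form of $f(Y\mid X,\Alpha,\Beta,\Sigma)$.

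Next I would expand this conditional density by repeated conditioning:
\[
 f(Y\mid X,\Alpha,\Beta,\Sigma) = \prod_{t = 1}^n f(Y_t \mid Y_{t - 1},\dots,Y_1, X,\Alpha,\Beta,\Sigma).
\]
Because the starting values $Y_{-q+1},\dots,Y_0$ are non-stochastic and known, $Z_t$ is a deterministic function of $(Y_1,\dots,Y_{t-1})$, and the strong exogeneity of $X$ together with independence of the $U_t$ implies that, conditionally on the past $Y$'s and on $X$, $U_t$ is still $\rN(0,\Sigma)$. Since $Y_t = \Alpha^\tsp Z_t + \Beta^\tsp X_t + U_t$ by the definition of $\Alpha$ and $Z_t$, the conditional distribution of $Y_t$ is $\rN(\Alpha^\tsp Z_t + \Beta^\tsp X_t,\Sigma)$, and the corresponding density is
\[
 (2\pi)^{-r/2}\vert\Sigma\vert^{-1/2}\exp\Bigl[-\tfrac{1}{2}(Y_t - \Alpha^\tsp Z_t - \Beta^\tsp X_t)^\tsp \Sigma^{-1}(Y_t - \Alpha^\tsp Z_t - \Beta^\tsp X_t)\Bigr].
\]

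Finally I would multiply these $n$ densities together, producing the constant $(2\pi)^{-nr/2}\vert\Sigma\vert^{-n/2}$ and a sum in the exponent, and then convert that sum to trace form. Writing $R = Y - Z\Alpha - X\Beta$ so that the $t$-th row of $R$ is $(Y_t - \Alpha^\tsp Z_t - \Beta^\tsp X_t)^\tsp$, the identity
\[
 \sum_{t=1}^n (Y_t - \Alpha^\tsp Z_t - \Beta^\tsp X_t)^\tsp \Sigma^{-1}(Y_t - \Alpha^\tsp Z_t - \Beta^\tsp X_t) = \tr\bigl(\Sigma^{-1} R^\tsp R\bigr)
\]
follows immediately from $\tr(AB) = \tr(BA)$ applied row-by-row. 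Substituting yields the stated expression with $\etr(\cdot) = \exp(\tr(\cdot))$. There is no real obstacle here; the only step requiring any care is verifying that strong exogeneity and the i.i.d.\ normal assumption on $\{U_t\}$ together ensure the conditional distribution of $U_t$ given the past remains $\rN(0,\Sigma)$, which is what allows the product-of-normals form in the second step.
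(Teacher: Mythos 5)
Your proposal is correct and follows essentially the same route as the paper's own proof: factor via strong exogeneity, chain the one-step-ahead conditionals, argue that $U_t$ given $X$ and the past $Y$'s remains $\rN(0,\Sigma)$ because the past $Y$'s are functions of past $U$'s and $X$'s, and assemble the product into the trace form. The only difference is that you spell out the final sum-to-trace identity, which the paper leaves as ``straightforward algebra.''
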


For fixed data, $f(Y\mid X, \Alpha, \Beta, \Sigma)$ is the same as for a
multivariate linear regression with partitioned design matrix $[Z, X] \in
\R{n\times (qr + p)}$ and coefficient matrix $[\Alpha^\tsp, \Beta^\tsp]^\tsp$.
However, our choice of priors is guided by the vector autoregression. Let
$\SPSD{r}$ denote the set of $r \times r$ symmetric positive semi-definite
(SPSD) matrices and, to define priors, let $m \in \R{qr^2}$, $C \in
\SPSD{qr^2}$, $D \in \SPSD{r}$, and $a \geq 0$ be hyperparameters. Our prior on
$\theta = (\alpha, \Beta, \Sigma) \in \Theta = \R{qr^2} \times \R{p\times r}
\times \SPD{r}$ is of the form $f(\theta) = f(\alpha)f(\Beta)f(\Sigma)$, with
\begin{align*}
 f(\alpha) & \propto \exp\left(-\frac{1}{2}[\alpha - m]^\tsp C [\alpha - m]
 \right), \\ f(\Beta)  & \propto 1,
\end{align*}
and
\begin{align*}
 f(\Sigma) \propto \vert \Sigma\vert^{-a/2}\etr\left(-\frac{1}{2}D \Sigma^{-1}
 \right),
\end{align*}
where $\vert \cdot \vert$ means the determinant when applied to matrices. The
flat prior on $\Beta$ is standard in multivariate scale and location problems,
including in particular the multivariate regression model. The priors on
$\alpha$ and $\Sigma$ are common in macroeconomics \citep{Karlsson2013} and the
prior on $\Sigma$ includes the inverse Wishart ($D\in \SPD{r}, a > 2r$) and
Jeffreys prior ($D = 0, a = r + 1$) as special cases. In other work on similar
models it is often assumed that $[Z, X]$ has full rank or that the prior for
$[\Alpha^\tsp, \Beta^\tsp]^\tsp$ is proper \citep{Abrahamsen.Hobert2017,
Backlund.Hobert2020,Ghosh.etal2018,Hobert.etal2018, Korobilis2008,
Tiao.Zellner1964}. Treating $\Alpha$ and $\Beta$ differently is appealing in the
current setting: it adheres to the common practice of using flat priors for
regression coefficients such as $\Beta$, while $m$ and $C$ can be chosen to
reflect the fact that many commonly studied time series are known to be near
non-stationary in the unit root sense. In particular, with economic and
financial data it is often reasonable to expect the diagonal elements of
$\Alpha_1$ to be near one. For a more thorough discussion of popular priors in
Bayesian VARs we refer the reader to Karlsson \citep{Karlsson2013}.

The following result gives two different sets of conditions that lead to a
proper posterior. Though we focus on proper normal or flat priors for $\alpha$
in the rest of the paper, it may be relevant for other work to note that the
proposition holds for any prior $f(\alpha)$ satisfying the conditions. For
example, $f(\alpha)$ could be truncated to impose stability of the VARX, which
if $q = 1$ corresponds to a prior with support only on $\alpha$ for which the
spectral norm of $\Alpha$ is less than one \citep[see e.g.][for
definitions]{Lutkepohl2005}.

\begin{prop} \label{prop:prop_post}
 If either
 \begin{enumerate}
  \item $D \in \SPD{r}$, $X$ has full column rank, $n + a > 2r + p$, and
  $f(\alpha)$ is proper; or
  \item $[Y, Z, X] \in \R{n \times (r + qr + p)}$ has full column rank, $n + a >
  (2 + q)r + p$, and $f(\alpha)$ is bounded,
 \end{enumerate}
 then the posterior distribution is proper; if, in addition, $f(\alpha) \propto
\exp(-[\alpha - m]^\tsp C [\alpha - m]/2)$, then with $S = n^{-1}(Y - Z\Alpha -
X\Beta)^\tsp (Y - Z\Alpha - X\Beta)$, the posterior density is characterized by
 \begin{align} \label{eq:post}
  f(\Alpha, \Beta, \Sigma \mid Y, X) \propto \vert\Sigma\vert^{-\frac{n +
  a}{2}}\etr\left(-\frac{1}{2}\Sigma^{-1}[D + nS] - \frac{1}{2}(\alpha - m)^\tsp
  C(\alpha - m)\right).
 \end{align}
\end{prop}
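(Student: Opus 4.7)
The plan is to apply Bayes' rule to identify the posterior kernel
\[
f(\theta \mid Y, X) \propto f(\alpha)\vert\Sigma\vert^{-(n+a)/2}\etr\!\left[-\tfrac{1}{2}\Sigma^{-1}\bigl\{D + (Y - Z\Alpha - X\Beta)^\tsp(Y - Z\Alpha - X\Beta)\bigr\}\right],
\]
and to verify propriety by showing this kernel integrates to a finite value over $\Theta$. Once that is done, the explicit formula \eqref{eq:post} follows immediately by substituting the Gaussian prior for $\alpha$ and recognizing $nS = (Y - Z\Alpha - X\Beta)^\tsp(Y - Z\Alpha - X\Beta)$. The core work is the propriety argument, which I would split into two Fubini--Tonelli-based iterated integrations, choosing a different order for each of the two hypothesis sets.

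For Case 1, integrate in the order $\Beta, \Sigma, \alpha$. Since $X$ has full column rank, $X^\tsp X \in \SPD{p}$, and completing the square in $\Beta$ yields
\[
(Y - Z\Alpha - X\Beta)^\tsp(Y - Z\Alpha - X\Beta) = (\Beta - \hat\Beta)^\tsp X^\tsp X(\Beta - \hat\Beta) + (Y - Z\Alpha)^\tsp(I - P_X)(Y - Z\Alpha)
\]
with $P_X = X(X^\tsp X)^{-1}X^\tsp$. Vectorizing $\Beta$ turns the $\Beta$-integral into a Gaussian integral with covariance $\Sigma\otimes (X^\tsp X)^{-1}$, contributing a factor proportional to $\vert\Sigma\vert^{p/2}\vert X^\tsp X\vert^{-r/2}$. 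What remains in $\Sigma$ is an inverse-Wishart kernel with scale $D + (Y-Z\Alpha)^\tsp(I-P_X)(Y-Z\Alpha)$, which lies in $\SPD{r}$ because $D$ does; it integrates finitely precisely when $n + a > 2r + p$, producing a constant multiple of $\vert D + (Y-Z\Alpha)^\tsp(I-P_X)(Y-Z\Alpha)\vert^{-(n+a-p-r-1)/2}$. By L\"owner monotonicity of the determinant this quantity is bounded above by $\vert D\vert^{-(n+a-p-r-1)/2}$ uniformly in $\alpha$, so the outer integral is dominated by a finite constant times $\int f(\alpha)\,\dd\alpha$, which is finite by properness.

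For Case 2, integrate in the order $(\alpha, \Beta), \Sigma$. Since $[Y, Z, X]$ has full column rank, so does $W := [Z, X]$, giving $W^\tsp W \in \SPD{qr+p}$; and bounding $f(\alpha)$ by its finite supremum reduces the problem to one with an effectively flat prior on $(\alpha, \Beta)$. Writing $Z\Alpha + X\Beta = W[\Alpha^\tsp, \Beta^\tsp]^\tsp$ and completing the square produces a matrix-Gaussian integral over $\R{(qr+p)\times r}$ that is finite and proportional to $\vert\Sigma\vert^{(qr+p)/2}\vert W^\tsp W\vert^{-r/2}$, leaving the residual $S_W := Y^\tsp(I - P_W)Y$ inside the $\Sigma$-exponential. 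Full column rank of $[Y, Z, X]$ ensures $(I - P_W)Y$ has rank $r$, so $S_W$, and hence $D + S_W$, lies in $\SPD{r}$; the remaining $\Sigma$-integral is then an inverse-Wishart kernel that converges if and only if $n + a > (2+q)r + p$.

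The main obstacle is organizational rather than analytical: the order of integration must be arranged so that at each step the scale matrix of the inverse-Wishart kernel is actually SPD, and the degree-of-freedom accounting must be matched to the stated thresholds $2r + p$ and $(2+q)r + p$ exactly. The characterization \eqref{eq:post} then follows directly by substituting $f(\alpha) \propto \exp(-\tfrac{1}{2}(\alpha-m)^\tsp C(\alpha-m))$ into the kernel displayed above and collecting exponents.
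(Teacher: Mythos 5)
Your proposal is correct and follows essentially the same route as the paper: integrate out the regression coefficients by completing matrix-normal squares, then recognize inverse-Wishart kernels in $\Sigma$ whose scale matrices are positive definite under the respective rank hypotheses, with the degrees-of-freedom count giving exactly the thresholds $n + a > 2r + p$ and $n + a > (2+q)r + p$. The only immaterial differences are that in Case 1 the paper bounds $\etr(-\tfrac{1}{2}E^\tsp Q_X E \Sigma^{-1})$ by one and recognizes a proper inverse Wishart times a proper $f(\alpha)$ rather than integrating $\Sigma$ first and invoking determinant monotonicity, and in Case 2 the paper integrates $\Beta$ and then $\Alpha$ sequentially (via the Frisch--Waugh--Lovell decomposition) where you integrate them jointly.
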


\begin{proof}
 Appendix \ref{app:main}.
\end{proof}

The first set of conditions is relevant to the small $n$-setting. It implies
that if the prior on $\Sigma$ is a proper inverse Wishart density, so that $a >
2r$ and $D\in \SPD{r}$, then the posterior is proper if $f(\alpha)$ is proper
and $X$ has full column rank. In particular, $r$ or $q$ can be arbitrarily large
in comparison to $n$. Thus, this setting is compatible with large VARs
\citep{Banbura.etal2009,Ghosh.etal2018,Koop2013}. The second set of conditions
allows for the use of improper priors also on $\alpha$ and $\Sigma$ when $n$ is
large in comparison to all of $p$, $q$, and $r$. The full column rank of $[Y, Z,
X]$ is natural in large-$n$ settings. In practice, one expects it to hold unless
the squares regression of $Y$ on $Z$ and $X$ gives residuals that are
identically zero.

The literature on convergence properties of MCMC algorithms for Bayesian VAR(X)s
is limited. An MCMC algorithm for a multivariate linear regression model has
been proposed and its convergence rate in the small-$n$ setting studied
\citep{Hobert.etal2018}. By the preceding discussion, this includes the VARX as
a special case, however, the (improper) prior used is $f(\theta) \propto \vert
\Sigma\vert^{-a}$ which is not compatible with the large VARXs we allow for in
the small-$n$ setting. A two-component ($\Alpha$ and $\Sigma$) Gibbs sampler for
Bayesian vector autoregressions without predictors has been proposed
\citep{Kadiyala.Karlsson1997}, but the analysis of it is simulation-based and as
such does not provide any theoretical guarantees. Our results address this
since, as we will discuss in more detail below, the algorithm we consider
simplifies to this Gibbs sampler when there are no predictors.

\section{A collapsed Gibbs sampler} \label{sec:collapsed}

If the precision matrix in the prior on $f(\alpha)$, $C$, is a matrix of zeros,
then the VARX posterior is a normal-(inverse) Wishart for which MCMC is
unnecessary. However, when $C \in \SPD{qr^2}$ the posterior is analytically
intractable and there are many potentially useful MCMC algorithms. For example,
the full conditional distributions have familiar forms so it is straightforward
to implement a three-component Gibbs sampler. Another sensible option is to
group $\Alpha$ and $\Beta$ and update them together. Here, we will instead make
use of the particular structure the partitioned matrix $[Z, X]$ offers and
devise a collapsed Gibbs sampler \citep{Liu1994}; that is, a Gibbs sampler where
some updates are not using full conditional distributions, but conditional
distributions with one or more components integrated out. As we will see, the
structure of the collapsed sampler, and in particular relations between the
convergence rates of marginal chains and the full chain, is instrumental to our
development. For a discussion more generally of how operator norms of collapsed
and non-Collapsed Gibbs samplers compare we refer the reader to
\citep{Liu1994}.

For the case where the precision matrix $C$ in the prior on $\alpha$ is positive
definite and $\Beta \equiv 0$, so that the predictor matrix $X$ plays no role in
the model, a two-component Gibbs sampler has been proposed \citep{Karlsson2013}.
We will show that the algorithm we study includes this Gibbs sampler as a
special case, with minor modifications, and as a consequence our results apply
almost verbatim to that sampler. A formal description of one iteration of the
collapsed Gibbs sampler is given in Algorithm \ref{alg:collapsed}.

\begin{algorithm}[H]
 \caption{Collapsed Gibbs sampler} \label{alg:collapsed}
 \begin{algorithmic}[1]
  \State {\it Input:} Current value $(\alpha^h, \Beta^h, \Sigma^h)$
  \State Draw $\Sigma^{h + 1}$ from the distribution of
  $\Sigma \mid \alpha^{h}, Y, X$
  \State Draw $\alpha^{h + 1}$ from the distribution of
  $\alpha \mid \Sigma^{h + 1}, Y, X$
  \State Draw $\Beta^{h + 1}$ from the distribution of
  $\Beta \mid \alpha^{h + 1}, \Sigma^{h + 1}, Y, X$
  \State Set $h=h+1$
 \end{algorithmic}
\end{algorithm}
We next give the conditional distributions necessary for its implementation. Let
$\rM(M, U, V)$ denote the matrix normal distribution with mean $M$ and scale
matrices $U$ and $V$, that is, the distribution of a matrix whose vectorization
is multivariate normal with mean $\vecop(M)$ and covariance matrix $V\otimes U$,
where $\otimes$ is the Kronecker product. Let also $\rW^{-1}(U, c)$ denote the
inverse Wishart distribution with scale matrix $U$ and $c$ degrees of freedom.
For any real matrix $M$, define $P_M$ to be the projection onto its column space
and $Q_M$ the projection onto the orthogonal complement of its column space.
Finally, define $B = B(\Sigma) = C + \Sigma^{-1} \otimes Z^\tsp Q_X Z$ and $u =
u(\Sigma) = B^{-1}[Cm + (\Sigma^{-1} \otimes Z^\tsp Q_X)\vecop(Q_X Y)]$.

\begin{lem} \label{lem:collapse_dist}
 If one of the two sets of conditions in
 Proposition~\ref{prop:prop_post} holds, then
 \begin{align*}
   & \Sigma \mid \Alpha, Y, X \sim  \rW^{-1}\left(D + (Y - Z\Alpha)^\tsp Q_X (Y
     - Z\Alpha),  n + a - p - r - 1\right),  \\
   & \alpha \mid \Sigma, Y, X \sim \rN(u, B^{-1}) \text{, and } \\
   & \Beta \mid \Alpha, \Sigma, Y, X \sim \rM\left([X^\tsp X]^{-1}X^\tsp (Y -
     Z\Alpha), [X^\tsp X]^{-1}, \Sigma\right).
 \end{align*}
\end{lem}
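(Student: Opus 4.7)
My plan is to derive each of the three conditional distributions directly from the joint posterior density in \eqref{eq:post}, which Proposition~\ref{prop:prop_post} provides under the stated assumptions. The key observation is that, after expanding $D + nS = D + (Y - Z\Alpha - X\Beta)^\tsp(Y - Z\Alpha - X\Beta)$, each of the three parameters enters the exponent in a recognizable quadratic form, so the work reduces to completing the square and matching densities. I would carry out the arguments in the order $\Beta$, $\Sigma$, $\alpha$, so that the integration of $\Beta$ (which is needed for the collapsed updates of $\Sigma$ and $\alpha$) only has to be done once.

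First, for the $\Beta$-conditional, fix $\Alpha$ and $\Sigma$. Writing $W = Y - Z\Alpha$ and expanding
\[
(W - X\Beta)^\tsp(W - X\Beta) = W^\tsp Q_X W + (\Beta - \hat\Beta)^\tsp X^\tsp X\,(\Beta - \hat\Beta),
\]
with $\hat\Beta = (X^\tsp X)^{-1}X^\tsp W$, isolates the $\Beta$-dependence in a quadratic form with precision $X^\tsp X$ (in rows) and $\Sigma^{-1}$ (in columns). This is exactly the kernel of $\rM(\hat\Beta, (X^\tsp X)^{-1}, \Sigma)$, giving the third display of the lemma. The assumption that $X$ has full column rank (guaranteed by either set of hypotheses in Proposition~\ref{prop:prop_post}) ensures that $(X^\tsp X)^{-1}$ exists.

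Next, for the $\Sigma$-conditional, I integrate out $\Beta$ using the matrix-normal normalizing constant, which contributes a factor of $\vert \Sigma\vert^{p/2}$ (times a constant independent of $\Sigma$). What remains in the exponent from the $(Y-Z\Alpha-X\Beta)$ quadratic is $W^\tsp Q_X W$. So as a function of $\Sigma$ alone,
\[
f(\Sigma\mid \Alpha, Y, X) \propto \vert\Sigma\vert^{-(n+a-p)/2}\etr\!\left(-\tfrac{1}{2}\Sigma^{-1}[D + (Y-Z\Alpha)^\tsp Q_X(Y-Z\Alpha)]\right),
\]
and matching the exponent $-(n+a-p)/2$ to the inverse-Wishart form $-(c+r+1)/2$ yields $c = n+a-p-r-1$, giving the first display.

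The most delicate step is the $\alpha$-conditional, and this is where I expect the main bookkeeping obstacle. Starting from the same marginal density in $(\Alpha,\Sigma)$, I fix $\Sigma$ and convert the matrix quadratic $\tr(\Sigma^{-1}(Y-Z\Alpha)^\tsp Q_X(Y-Z\Alpha))$ to vector form via $\vecop(Z\Alpha) = (I_r\otimes Z)\alpha$ and the identity $\tr(A^\tsp Q_X A\,\Sigma^{-1}) = \vecop(A)^\tsp(\Sigma^{-1}\otimes Q_X)\vecop(A)$. Expanding
\[
[\vecop(Y) - (I_r\otimes Z)\alpha]^\tsp(\Sigma^{-1}\otimes Q_X)[\vecop(Y) - (I_r\otimes Z)\alpha]
\]
and using the mixed-product rule $(A\otimes B)(C\otimes D) = (AC)\otimes(BD)$ produces the $\alpha$-quadratic $\alpha^\tsp(\Sigma^{-1}\otimes Z^\tsp Q_X Z)\alpha$ and linear term $-2\alpha^\tsp(\Sigma^{-1}\otimes Z^\tsp Q_X)\vecop(Y)$. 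Adding the prior quadratic $(\alpha - m)^\tsp C(\alpha - m)$ gives the Gaussian kernel with precision $B = C + \Sigma^{-1}\otimes Z^\tsp Q_X Z$ and mean $B^{-1}[Cm + (\Sigma^{-1}\otimes Z^\tsp Q_X)\vecop(Y)]$. To match the stated form of $u$, I use the idempotence of $Q_X$, namely $Z^\tsp Q_X = Z^\tsp Q_X Q_X$, so that $(\Sigma^{-1}\otimes Z^\tsp Q_X)\vecop(Y) = (\Sigma^{-1}\otimes Z^\tsp Q_X)\vecop(Q_X Y)$. Invertibility of $B$ under the assumptions (either $C$ is positive definite from the proper normal prior, or $Z^\tsp Q_X Z$ is positive definite from the full column rank condition on $[Y,Z,X]$) completes the identification of the second display.
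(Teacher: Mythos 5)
Your proposal is correct and follows essentially the same route as the paper: the paper likewise reads off the $\Beta$ full conditional from the $Q_X+P_X$ decomposition of the likelihood (its equation \eqref{eq:joint_dens}), obtains the $\Sigma$ and $\alpha$ conditionals from the $\Beta$-integrated density (the integrand of \eqref{eq:finite_post}), and identifies the Gaussian for $\alpha$ by vectorizing the trace exactly as you do, merely phrasing the completion of the square in terms of the Hessian and gradient of the log density. Your added remarks on the idempotence of $Q_X$ and the invertibility of $B$ under either hypothesis set are correct details that the paper leaves implicit.
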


\begin{proof}
 Appendix \ref{app:main}.
\end{proof}

The collapsed Gibbs sampler in Algorithm \ref{alg:collapsed} simulates a
realization from a Markov chain having one-step transition kernel $K_C(\theta',
A)$ defined, for any measurable $A \subseteq \Theta = \R{qr^2} \times \R{p\times
r} \times \SPD{r}$, by
\[
 \iiint I_A(\alpha, \Beta, \Sigma)f(\Sigma \mid \alpha', Y, X)f(\alpha \mid
 \Sigma, Y, X)f(\Beta \mid \alpha, \Sigma, Y, X)\,\dd \Sigma\, \dd \alpha\, \dd
 \Beta,
\]
where the subscript $C$ is short for collapsed. However, instead of working
directly with $K_C$ we will use its structure to reduce the problem in a
convenient way. Consider the sequence $\{(\alpha^h, \Sigma^h)\}$, $h = 1, 2,
\dots$, obtained by ignoring the component for $\Beta$ in Algorithm
\ref{alg:collapsed}. The sequence $\{(\alpha^h, \Sigma^h)\}$ is essentially
generated by a two-component Gibbs sampler. More precisely, if $Q_X$ is replaced
by the $n\times n$ identity $I_n$ in the conditional distributions of $\Sigma$
and $\alpha$ used in Algorithm \ref{alg:collapsed}, then the algorithm defined
by steps 1, 2, 3, and 5 is a two-component Gibbs sampler exploring the posterior
$f(\alpha, \Sigma \mid Y)$ for the model that takes $\Beta \equiv 0$ in
\eqref{eq:varx}. The transition kernel for $\{(\alpha^h, \Sigma^h)\}$ is, for
any measurable $A \subseteq \R{qr^2} \times \SPD{r}$,
\[
 K_G((\alpha', \Sigma'), A) = \iint I_A(\alpha, \Sigma) f(\alpha \mid
 \Sigma, Y, X)f(\Sigma \mid \alpha', Y, X)\,\dd \alpha\, \dd \Sigma .
\]
A routine calculation shows that since $K_G$, by construction, has invariant
distribution $F_{\Alpha, \Sigma}(\cdot \mid Y, X)$, then $K_C$ has the VARX
posterior $F(\cdot | Y, X)$ as its invariant distribution.

The sequences $\{\alpha^h\}$ and $\{\Sigma^h\}$ are also Markov chains.  The
transition kernel for the $\{\alpha^h\}$ sequence is, for any measurable
$A\subseteq \R{qr^2}$, \begin{align}\label{eq:transition_M} K_\Alpha(\alpha', A)
= \iint I_A(\alpha) f(\alpha \mid \Sigma, Y, X)f(\Sigma \mid \alpha', Y, X)\,
\dd \Sigma\,\dd \alpha. \end{align} The transition kernel, $K_{\Sigma}$, for the
$\{\Sigma^h\}$ sequence is constructed similarly.  The kernel $K_\Alpha$
satisfies detailed balance with respect to the posterior marginal
$F_{\Alpha}(\cdot \mid Y, X)$ and similarly for $K_{\Sigma}$ and hence each has
the respective posterior marginal as its invariant distribution.  However, the
kernels $K_{C}$ and $K_{G}$ do not satisfy detailed balance with respect to
their invariant distributions.

In Sections~\ref{sec:geo_erg} and~\ref{sec:asy_stab} we will establish geometric
ergodicity of $K_C$ and study its asymptotic stability, respectively. Our
approach, which is motivated by the following lemma, will be to analyze
$K_{\Alpha}$ in place of $K_C$; the lemma says we can analyze either of $K_G$,
$K_{\Alpha}$ or $K_{\Sigma}$ in place of $K_C$ (see also \citep{Robert1995}).
The proof of the lemma uses only well known results about de-initializing Markov
chains \cite{Roberts.Rosenthal2001} and can be found in Appendix \ref{app:main}.

\begin{lem} \label{lem:tv_dist}
 For any $\theta = (\alpha, \Beta, \Sigma) \in \Theta$, and $h \in \{1, 2,
 \dots\}$,
 \begin{align*}
  \Vert K_C^h(\theta, \cdot)  - F(\cdot | Y, X)\Vert_{TV} & = \Vert
  K_G^h((\alpha, \Sigma), \cdot) - F_{\Alpha, \Sigma}(\cdot|Y,X)\Vert_{TV} \\
  & \leq \Vert K_{\Alpha}^{h - 1}(\alpha, \cdot) -
  F_{\Alpha}(\cdot|Y,X)\Vert_{TV}
 \end{align*}
\end{lem}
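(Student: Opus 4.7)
The plan exploits two structural features of Algorithm~\ref{alg:collapsed}. First, the transition from $(\alpha^h, \Beta^h, \Sigma^h)$ to $(\alpha^{h+1}, \Sigma^{h+1})$ uses only $\alpha^h$ (in Step~2), not $\Beta^h$ or $\Sigma^h$, while $\Beta^{h+1}$ is then drawn from the posterior conditional $f(\Beta \mid \alpha^{h+1}, \Sigma^{h+1}, Y, X)$. Consequently $\{(\alpha^h, \Sigma^h)\}$ is a Markov chain with kernel $K_G$, and the full chain $\{(\alpha^h, \Beta^h, \Sigma^h)\}$ can be realized by running $K_G$ and, at each step $h \geq 1$, appending an independent draw of $\Beta^h$ from $f(\Beta \mid \alpha^h, \Sigma^h, Y, X)$.

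For the equality, I would show that $K_C^h(\theta, \cdot)$ and $F(\cdot \mid Y, X)$ admit a common disintegration: both are joint laws on $(\alpha, \Beta, \Sigma)$ whose conditional for $\Beta$ given $(\alpha, \Sigma)$ equals $f(\Beta \mid \alpha, \Sigma, Y, X)$. For $K_C^h$ this follows from the observation in the previous paragraph, and for $F$ it is the definition of a conditional density together with Lemma~\ref{lem:collapse_dist}. Their $(\alpha, \Sigma)$-marginals are, respectively, $K_G^h((\alpha, \Sigma), \cdot)$ and $F_{\Alpha, \Sigma}(\cdot \mid Y, X)$. A standard fact, which is precisely the de-initializing argument of \citep{Roberts.Rosenthal2001}, states that the TV distance between two joint measures sharing a common regular conditional equals the TV distance between their marginals; applying it yields the equality.

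For the inequality, let $\tilde K$ denote the kernel from $\R{qr^2}$ to $\R{qr^2} \times \SPD{r}$ given by one iteration of $K_G$, namely $\tilde K(\alpha', A) = \iint I_A(\alpha, \Sigma) f(\alpha \mid \Sigma, Y, X) f(\Sigma \mid \alpha', Y, X)\, \dd \Sigma\, \dd \alpha$; note that this does not depend on $\Sigma'$. Iterating Algorithm~\ref{alg:collapsed} then gives $K_G^h((\alpha, \Sigma), \cdot) = \int \tilde K(\alpha^{h-1}, \cdot)\, K_\Alpha^{h-1}(\alpha, \dd \alpha^{h-1})$, while the $F_\Alpha$-invariance of $K_\Alpha$ and the fact that $\tilde K$ pushes $F_\Alpha$ forward to $F_{\Alpha, \Sigma}$ yield $F_{\Alpha, \Sigma}(\cdot \mid Y, X) = \int \tilde K(\alpha^{h-1}, \cdot)\, F_\Alpha(\dd \alpha^{h-1} \mid Y, X)$. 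Subtracting these two representations and using that Markov kernels act as contractions on signed measures in total variation gives the stated bound.

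The hard part is largely bookkeeping; neither half involves a deep technical obstacle. The one point requiring care is the appearance of $h-1$ rather than $h$ on the right-hand side of the inequality: it reflects that the first iteration of $K_G$ already consumes one $K_\Alpha$-step, so after $h$ applications only $h-1$ pure $\alpha$-to-$\alpha$ transitions remain to be factored out through $\tilde K$.
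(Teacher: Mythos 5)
Your proposal is correct and follows essentially the same route as the paper: the paper verifies that $(\alpha^h,\Sigma^h)$ and $\theta^h$ are co-de-initializing and that $\alpha^{h-1}$ is de-initializing for $(\alpha^h,\Sigma^h)$, then cites Corollary~1 and Theorem~1 of Roberts and Rosenthal, whereas you unpack those same two facts from first principles (common regular conditional implies equal total variation distance of the joints and their marginals; factoring $K_G^h$ through $\tilde K\circ K_\Alpha^{h-1}$ plus contraction of Markov kernels). Both halves of your argument, including the bookkeeping that produces the exponent $h-1$, are sound.
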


The primary tool we will use for investigating both geometric ergodicity and
asymptotic stability is the following well known result \citep[][Theorem
12]{Rosenthal1995}, which has been specialized to the current setting.  Note
that the kernel $K_{\Alpha}$ acts to the left on measures, that is, for a
measure $\nu$, we define
\[
 \nu K^{h}_{\Alpha}(\cdot) = \int \nu(\dd \alpha) K^{h}_{\Alpha}(\alpha, \cdot).
\]

\begin{thm} \label{thm:Rosenthal}
  Suppose $V: \R{qr^2} \to [0, \infty)$ is such that for some $\lambda < 1$ and
  some $L < \infty$
  \begin{equation} \label{eq:drift_cond} \int V(\alpha)
    K_{\Alpha}(\alpha', \dd \alpha) \leq \lambda V(\alpha') + L \hspace*{5mm}
    \text{ for all } \alpha'.
 \end{equation}
 Also suppose there exists $\veps > 0$, a measure $R$, and some $T > 2L / (1 -
 \lambda)$ such that
 \begin{equation} \label{eq:minor_cond}
   K_{\Alpha}(\alpha, \cdot) \geq \veps R(\cdot) \hspace*{5mm} \text{ for all }
   \alpha \in \{\alpha : V(\alpha) \leq T\}.
 \end{equation}
 Then $K_{\Alpha}$ is geometrically ergodic and, moreover, if
 \[
  \bar{\rho} = (1 - \veps)^{c} \vee \left(\frac{1 + 2 L + \lambda T}{1 + T}
  \right)^{1 - {c}}\left(1 + 2L + 2\lambda T\right)^{c} \hspace*{5mm} \text{ for
  } c \in (0, 1),
 \]
 then, for any initial distribution $\nu$,
 \begin{equation} \label{eq:tv_ub}
  \|\nu K^{h}_{\Alpha}(\cdot) - F_{\Alpha}(\cdot | Y, X) \|_{TV} \le \left(2 +
  \frac{L}{1-\lambda} + \int V(\alpha) \nu(\dd \alpha)\right)  \bar{\rho}^{h} .
 \end{equation}
\end{thm}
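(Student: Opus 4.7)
The plan is to invoke the classical coupling argument underlying \citep{Rosenthal1995}. I would build two coupled copies of the chain, $\{\alpha^h\}$ started from $\nu$ and $\{\tilde{\alpha}^h\}$ started from $F_\Alpha(\cdot \mid Y, X)$, so that the coupling inequality
\[
 \|\nu K_\Alpha^h(\cdot) - F_\Alpha(\cdot \mid Y, X)\|_{TV} \leq \pr(\alpha^h \neq \tilde{\alpha}^h)
\]
reduces the theorem to bounding the right-hand side by $\bigl[2 + L/(1-\lambda) + \int V\, \dd \nu\bigr]\bar{\rho}^h$.

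The coupling is as follows. Let $S = \{\alpha : V(\alpha) \leq T\}$. At each step, if $(\alpha^h, \tilde{\alpha}^h) \in S \times S$, use \eqref{eq:minor_cond} to draw $\alpha^{h+1} = \tilde{\alpha}^{h+1}$ from the common minorizing measure $R$ with probability $\veps$, and otherwise draw the two increments independently from the respective residual kernels $(K_\Alpha(\alpha^h, \cdot) - \veps R)/(1-\veps)$ and $(K_\Alpha(\tilde{\alpha}^h, \cdot) - \veps R)/(1-\veps)$; once the chains meet, run them identically thereafter. When $(\alpha^h, \tilde{\alpha}^h) \notin S \times S$, update the chains independently according to $K_\Alpha$. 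Setting $W(\alpha, \tilde{\alpha}) := 1 + V(\alpha) + V(\tilde{\alpha})$ and using \eqref{eq:drift_cond}, the conditional expectation of $W$ at time $h+1$ is bounded on $(S \times S)^c$ by $\lambda W(\alpha^h, \tilde{\alpha}^h) + 2L + 1$, and on $S \times S$ trivially by $1 + 2L + 2\lambda T$.

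The crux is to combine these one-step bounds through H\"older's inequality with exponent $c \in (0,1)$ so as to manufacture a supermartingale of the form $r^{-h}(1-\veps)^{-c N_h} W(\alpha^h, \tilde{\alpha}^h)^{1-c}$, where $N_h$ counts the visits to $S \times S$ before time $h$ at which a failed coupling was attempted and $r = \bar{\rho}$. On $S \times S$ the attempted coupling supplies an expected factor $(1-\veps)^c$ when it fails, while on the complement the drift yields $((1+2L+\lambda T)/(1+T))^{1-c}$ multiplied by the H\"older residual $(1+2L+2\lambda T)^c$. The condition $T > 2L/(1-\lambda)$ is precisely what forces $(1+2L+\lambda T)/(1+T) < 1$. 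Applying Markov's inequality to this supermartingale at step $h$, together with the stationary estimate $\int V\, \dd F_\Alpha \leq L/(1-\lambda)$ obtained by iterating \eqref{eq:drift_cond} under stationarity, yields both the prefactor $2 + L/(1-\lambda) + \int V\, \dd\nu$ and the geometric rate $\bar{\rho}^h$.

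The main obstacle is the H\"older balancing that pins down the exact form of $\bar{\rho}$; everything else---coupling construction, iteration of the drift, stationary bound on $V$, and Markov's inequality---is essentially bookkeeping. Because the present statement is a verbatim specialization of Theorem~12 of \citep{Rosenthal1995}, I would simply cite that reference rather than redevelop the full calculation here.
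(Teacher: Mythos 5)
Your proposal is correct and takes essentially the same route as the paper: the paper offers no proof of this theorem beyond noting it is Theorem~12 of \citep{Rosenthal1995} specialized to $K_{\Alpha}$, and you likewise end by citing that result, with your coupling/drift/minorization sketch (including the observation that $T > 2L/(1-\lambda)$ is exactly what makes $(1+2L+\lambda T)/(1+T)<1$) being an accurate outline of Rosenthal's argument. No gap to report.
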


It is common for the initial value to be chosen deterministically, in which case
\eqref{eq:tv_ub} suggests choosing a starting value to minimize $V$.
Theorem~\ref{thm:Rosenthal} has been successfully employed to determine
sufficient burn-in in the sense that the upper bound on the right-hand side of
\eqref{eq:tv_ub} is below some desired value \citep{Jones.Hobert2001,
Jones.Hobert2004, Rosenthal1996}, but, unfortunately, the upper bound is often
so conservative as to be of little utility. However, our interest is twofold; it
is easy to see that there is a $c \in (0,1)$ such that $\bar{\rho}<1$ and hence
if $K_{\Alpha}$ satisfies the conditions, then it is geometrically ergodic and,
as developed and exploited in other recent research \citep{Qin.Hobert2019}, the
geometric convergence rate $\rho^\star$ is upper bounded by $\bar{\rho}$.
Outside of toy examples, we know of no general state space Monte Carlo Markov
chains for which $\rho^{\star}$ is known.

Consider the setting where the number of observations tends to infinity; that
is, there is a sequence of data sets $\{\mathcal{D}_{n} \}$ and corresponding
transition kernels $\{ K_{\Alpha, n} \}$ with $n \to \infty$.  If $\liminf_{n\to
\infty}\bar{\rho}_n = 1$ almost surely, then we say the drift
\eqref{eq:drift_cond} and minorization \eqref{eq:minor_cond} are asymptotically
unstable in the sense that, asymptotically, they provide no control over
$\rho^{\star}_{n}$. On the other hand, because $\rho^{\star}_{n} \le
\bar{\rho}_{n}$ establishing that $\limsup_{n\to \infty}\bar{\rho}_n < 1$ almost
surely or that $\lim_{n \to \infty}\pr(\bar{\rho}_n < 1) = 1$, leads to
asymptotically stable geometric ergodicity as defined in the introduction.

Notice that $\bar{\rho}$ depends on the drift function $V$ through
$\varepsilon$, $\lambda$, and $L$.  Thus the choice of drift function which
establishes geometric ergodicity for a fixed $n$ may not result in asymptotic
stability as $n \to \infty$.  Indeed, in Section~\ref{sec:geo_erg} we use one
$V$ to show that $K_{\Alpha}$ is geometrically ergodic under weak conditions
when $n$ is fixed, while in Section~\ref{sec:asy_stab} a different drift
function and slightly stronger conditions are needed to achieve asymptotically
stable geometric ergodicity of $K_{\Alpha}$.

\section{Geometric ergodicity} \label{sec:geo_erg}

In this section we consider the small-$n$ setting. That is, $n$ is fixed and the
data $Y$ and $X$ observed, or realized, and hence treated as constant.
Accordingly, we do not use a subscript for the sample size on the transition
kernels.  We next present some preliminary results that will lead to geometric
ergodicity of $K_{\Alpha}$, and hence $K_G$ and $K_C$.

We fix some notation before stating the next result.  Let $\Vert \cdot\Vert$
denote the Euclidean norm when applied to vectors and the spectral (induced)
norm when applied to matrices, $\Vert \cdot \Vert_F$ denotes the Frobenius norm
for matrices, and superscript $+$ denotes the Moore--Penrose pseudo-inverse.
Least squares estimators of $\Alpha$ and $\alpha$ are denoted by $\hat{\Alpha} =
(Z^\tsp Q_X Z)^{+}Z^\tsp Q_X Y$ and $\hat{\alpha} = \vecop(\hat{\Alpha})$,
respectively, and $y = \vecop(Y)$.

\begin{lem} \label{lem:drift}
  Define $V:\R{rq^2} \to [0, \infty)$ by $V(\alpha) = \Vert \alpha\Vert^2$. If
  $C \in \SPD{qr^2}$ and at least one of the two sets of conditions in
  Proposition~\ref{prop:prop_post} holds, then for any $\lambda \geq 0$ and with
  \begin{align*}
    L =  \left(\Vert C^{-1}\Vert \Vert C m\Vert + \Vert C^{-1/2} \Vert \Vert
    C^{1/2}\hat{\alpha}\Vert \right)^2 + \tr(C^{-1}),
 \end{align*}
 the kernel $K_{\Alpha}$ satisfies the drift condition
 \[
  \int V(\alpha) K_{\Alpha}(\alpha', \dd \alpha) \leq \lambda V(\alpha') + L.
 \]
\end{lem}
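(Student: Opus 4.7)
The plan is to exploit the conditional structure of $K_{\Alpha}$ in \eqref{eq:transition_M} together with Lemma~\ref{lem:collapse_dist}, and to show the stronger fact that $E[V(\alpha)\mid \alpha', Y, X]$ is actually bounded \emph{uniformly} in $\alpha'$ by $L$; this then trivially gives the drift inequality for every $\lambda\geq 0$. First I would use the tower property: since $K_{\Alpha}(\alpha',\cdot)$ is the law of $\alpha$ obtained by drawing $\Sigma \sim f(\Sigma\mid \alpha', Y, X)$ and then $\alpha \sim f(\alpha\mid \Sigma, Y, X)$, and since Lemma~\ref{lem:collapse_dist} gives $\alpha\mid \Sigma, Y, X \sim \rN(u(\Sigma), B(\Sigma)^{-1})$, the standard second-moment formula for a Gaussian yields
\[
\int \|\alpha\|^2 f(\alpha\mid \Sigma, Y, X)\,\dd\alpha = \|u(\Sigma)\|^2 + \tr(B(\Sigma)^{-1}).
\]
Hence it suffices to bound the right-hand side by $L$ for every $\Sigma\in\SPD{r}$; integrating over $f(\Sigma\mid \alpha', Y, X)$ then gives a bound independent of $\alpha'$.

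For the trace term, observe that $B(\Sigma) = C + \Sigma^{-1}\otimes Z^\tsp Q_X Z$ is the sum of $C\in\SPD{qr^2}$ and a positive semi-definite matrix, so $B(\Sigma)\succeq C$ and therefore $B(\Sigma)^{-1}\preceq C^{-1}$, giving $\tr(B(\Sigma)^{-1})\leq \tr(C^{-1})$. For the mean term, I would rewrite $u(\Sigma)$ using the vec/Kronecker identity $(\Sigma^{-1}\otimes Z^\tsp Q_X)\vecop(Q_X Y) = \vecop(Z^\tsp Q_X Y \Sigma^{-1})$ together with the normal-equation identity $Z^\tsp Q_X Z\,\hat{\Alpha} = Z^\tsp Q_X Y$ (which holds because $Z^\tsp Q_X Y$ lies in the column space of $Z^\tsp Q_X Z$, so the Moore--Penrose pseudo-inverse defining $\hat{\Alpha}$ inverts correctly). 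Setting $A(\Sigma) = \Sigma^{-1}\otimes Z^\tsp Q_X Z$, this shows $u(\Sigma) = B(\Sigma)^{-1}(Cm + A(\Sigma)\hat\alpha)$, so by the triangle inequality
\[
\|u(\Sigma)\| \leq \|B(\Sigma)^{-1} Cm\| + \|B(\Sigma)^{-1} A(\Sigma)\hat\alpha\|.
\]

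The first piece is at most $\|B(\Sigma)^{-1}\|\,\|Cm\|\leq \|C^{-1}\|\,\|Cm\|$. The second piece is the crux: I would change coordinates by $R(\Sigma) = C^{-1/2} A(\Sigma) C^{-1/2}$, which is PSD, and write
\[
B(\Sigma)^{-1}A(\Sigma) = C^{-1/2}(I + R(\Sigma))^{-1} R(\Sigma)\, C^{1/2}.
\]
Since $(I+R)^{-1}R$ has eigenvalues $\lambda_i/(1+\lambda_i)\in[0,1)$, its spectral norm is at most one, so
\[
\|B(\Sigma)^{-1}A(\Sigma)\hat\alpha\| \leq \|C^{-1/2}\|\,\|C^{1/2}\hat\alpha\|.
\]
Squaring the resulting bound on $\|u(\Sigma)\|$ and adding the trace bound yields the claimed constant $L$, independent of $\Sigma$, which then integrates trivially against $f(\Sigma\mid\alpha',Y,X)$.

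The only real obstacle is the bound on $\|B(\Sigma)^{-1}A(\Sigma)\hat\alpha\|$: the naive estimate $\|B^{-1}A\|\leq\|C^{-1}\|\|A\|$ is useless because $\|A(\Sigma)\|$ blows up as $\Sigma^{-1}$ becomes large, so one has to exploit the cancellation between $B^{-1}$ and $A$ via the simultaneous $C$-conjugation above. Everything else—verifying the vec/Kronecker identity, the normal-equation identity, and the trace inequality—is routine.
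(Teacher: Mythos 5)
Your proof is correct and reaches exactly the paper's constant $L$, but the crux is handled by a genuinely different argument. The paper bounds $\Vert B^{-1}(\Sigma^{-1}\otimes Z^\tsp Q_X)\vecop(Q_X Y)\Vert$ by first pulling out $\Vert C^{-1/2}\Vert$ and then invoking a variational fact (Lemma~\ref{lem:small_coef}: the ridge-regression coefficient has norm no larger than the least-squares coefficient) together with a generalized-inverse computation (Lemma~\ref{lem:g_inv_prod}) to identify the resulting expression as $\Vert C^{1/2}\hat{\alpha}\Vert$. You instead rewrite the data term via the normal equations $Z^\tsp Q_X Z\,\hat{\Alpha}=Z^\tsp Q_X Y$ (valid with the Moore--Penrose pseudo-inverse, as you correctly justify via column spaces), so that $u=B^{-1}(Cm+A\hat{\alpha})$ with $A=\Sigma^{-1}\otimes Z^\tsp Q_X Z$, and then show $B^{-1}A=C^{-1/2}(I+R)^{-1}RC^{1/2}$ with $R=C^{-1/2}AC^{-1/2}$ positive semi-definite, so that $\Vert (I+R)^{-1}R\Vert\leq 1$ does the work. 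Your route is more self-contained and purely spectral: it avoids the optimization argument and the generalized-inverse bookkeeping, and it makes transparent exactly where the cancellation between $B^{-1}$ and the unbounded factor $\Sigma^{-1}\otimes Z^\tsp Q_X Z$ occurs. The paper's route, on the other hand, isolates the ridge-versus-least-squares comparison as a reusable lemma and never needs the normal-equation identity. The remaining steps (Fubini, the Gaussian second-moment formula, the triangle inequality, and $\tr(B^{-1})\leq\tr(C^{-1})$) coincide in both proofs, and in both cases the resulting bound is uniform in $\Sigma$, hence integrates against $f(\Sigma\mid\alpha',Y,X)$ to give a bound independent of $\alpha'$, which is why any $\lambda\geq 0$ works.
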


\begin{proof}
 Assume first $Q_X = I_n$; the general case is then recovered by replacing $Z$
 and $Y$ by $Q_X Z$ and $Q_X Y$ everywhere. Using \eqref{eq:transition_M} and
 Fubini's Theorem yields
 \begin{align*}
  \int \Vert \alpha \Vert^2 K_{\Alpha}(\alpha', \dd \alpha) & =
    \iint \Vert \alpha \Vert^2 f(\alpha \mid \Sigma, Y, X)%
    f(\Sigma \mid \alpha', Y, X)\, \dd \Sigma\,\dd \alpha\\
    & = \iint \Vert \alpha \Vert^2 f(\alpha \mid \Sigma, Y,X)%
    f(\Sigma \mid \alpha', Y, X)\,\dd \alpha \, \dd \Sigma.
 \end{align*}
 Lemma \ref{lem:collapse_dist} and standard expressions for the moments of the
 multivariate normal distribution \citep[][Theorem 10.18]{Schott2005} give for
 the inner integral that
 \[
  \int \Vert \alpha\Vert^2 f(\alpha \mid \Sigma, Y, X)\,\dd \alpha = \Vert
  u\Vert^2 + \tr(B^{-1}).
 \]
 The triangle inequality gives $\Vert u \Vert \leq \Vert B^{-1}Cm\Vert + \Vert
 B^{-1}(\Sigma^{-1} \otimes Z^\tsp) y \Vert$. We work separately on the last two
 summands. First, since $\Sigma^{-1}\otimes Z^\tsp Z$ is SPSD, we get by Lemma
 \ref{lem:sum_pd_psd} that
\begin{align*}
  \Vert B^{-1}Cm\Vert \leq \Vert C^{-1}\Vert \Vert C m\Vert.
\end{align*}
Secondly,
 \begin{align*}
   & \Vert B^{-1}(\Sigma^{-1} \otimes Z^\tsp)y \Vert \\
   & = \Vert C^{-1/2}(I_{qr^2} + C^{-1/2}(\Sigma^{-1}\otimes Z^\tsp Z)%
   C^{-1/2})^{-1}C^{-1/2} (\Sigma^{-1}\otimes Z^\tsp) y \Vert \\
   & \leq \Vert C^{-1/2}\Vert \Vert (I_{qr^2} + C^{-1/2} (\Sigma^{-1}\otimes%
   Z^\tsp Z) C^{-1/2})^{-1}C^{-1/2} (\Sigma^{-1}\otimes Z^\tsp) y \Vert
 \end{align*}
Now by Lemma \ref{lem:small_coef}, with $(\Sigma^{-1/2}\otimes I_n)y$ and
$(\Sigma^{-1/2} \otimes Z) C^{-1/2}$ taking the roles of what is there denoted
$y$ and $X$, we have for any generalized inverse (denoted by superscript $g$)
that
\[
  \Vert (I_{qr^2} + C^{-1/2}(\Sigma^{-1} \otimes Z^\tsp Z)
  C^{-1/2})^{-1}C^{-1/2}(\Sigma^{-1}\otimes Z^\tsp)y \Vert
\]
is upper bounded by
\begin{equation} \label{eq:proof_eq_1}
  \Vert (C^{-1/2}(\Sigma^{-1} \otimes Z^\tsp Z) C^{-1/2})^{g}C^{-1/2}
(\Sigma^{-1}\otimes Z^\tsp)y \Vert.
\end{equation}
Lemma \ref{lem:g_inv_prod} says that $C^{1/2}(\Sigma^{-1} \otimes Z^\tsp Z)^+
 C^{1/2}$ is one such generalized inverse. Using that the Moore--Penrose
pseudo-inverse distributes over the Kronecker product
\citep{Magnus.Neudecker2002}, the middle part of this generalized inverse can be
written as $(\Sigma^{-1}\otimes Z^\tsp Z)^+ = \Sigma \otimes (Z^\tsp Z)^+$.
Thus, for this particular choice of generalized inverse \eqref{eq:proof_eq_1} is
equal to
\[
  \Vert C^{1/2}(\Sigma \otimes [Z^\tsp Z]^+)(\Sigma^{-1} \otimes Z^\tsp) y\Vert
  = \Vert C^{1/2}(I_r \otimes [Z^\tsp Z]^+ Z^\tsp)y\Vert.
\]
Thus, using also that $\tr(B^{-1}) \leq \tr(C^{-1})$ by Lemma
\ref{lem:sum_pd_psd} since $\Sigma^{-1} \otimes Z^\tsp Z$ SPSD, $\Vert u\Vert^2
+ \tr(B^{-1})$ is less than
\[
  \left(\Vert C^{-1}\Vert \Vert C m\Vert + \Vert C^{-1/2} \Vert \Vert C^{1/2}(I_r
\otimes [Z^\tsp Z]^+ Z^\tsp)y\Vert \right)^2 + \tr(C^{-1}).
\]
 Since the right-hand side does not depend on $\Sigma$, the proof is
 completed upon integrating both sides with respect to
 $f(\Sigma \mid \alpha', Y)\, \dd \Sigma$.
\end{proof}

\begin{lem} \label{lem:minor} If at least one of the two sets of
 conditions in Proposition~\ref{prop:prop_post} holds, then for any
 $T > 0$ and $\alpha$ such that $\Vert \alpha\Vert^{2} \leq T$, there
 exists a probability measure $R$ and
 \[
  \veps =  \frac{\vert D + Y^\tsp Q_{[Z, X]}Y \vert^{(n + a - p - r - 1)/2}}
  {\vert D + I_r(\Vert Q_X Y\Vert + \Vert Q_X Z\Vert \sqrt{T})^2\vert^{(n + a -
  p - r - 1)/2}} > 0
 \]
 such that
 \[
  K_{\Alpha}(\alpha,\cdot) \geq \veps R(\cdot).
 \]
\end{lem}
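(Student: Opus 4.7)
The plan is to construct the minorization measure by bounding $f(\Sigma \mid \alpha, Y, X)$ below, uniformly for $\alpha$ in the sublevel set $\{\|\alpha\|^2 \le T\}$, by a constant multiple of a fixed inverse Wishart density. Since by Lemma \ref{lem:collapse_dist} the density of $\Sigma \mid \alpha, Y, X$ is the inverse Wishart density
\[
 f(\Sigma \mid \alpha, Y, X) = \frac{\vert E(\alpha)\vert^{c/2}}{2^{cr/2}\Gamma_r(c/2)} \vert\Sigma\vert^{-(c + r + 1)/2} \etr\!\left(-\tfrac{1}{2}\Sigma^{-1} E(\alpha)\right),
\]
with $E(\alpha) = D + (Y - Z\Alpha)^\tsp Q_X(Y - Z\Alpha)$ and $c = n + a - p - r - 1$, I need two-sided Loewner bounds on $E(\alpha)$ that are independent of $\alpha$ when $\|\alpha\|^2 \le T$.

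First I would obtain the lower Loewner bound $E(\alpha) \geq D + Y^\tsp Q_{[Z, X]} Y$. Writing $(Y - Z\Alpha)^\tsp Q_X(Y - Z\Alpha) = (Q_X Y - Q_X Z\Alpha)^\tsp(Q_X Y - Q_X Z\Alpha)$ and observing that for every $c \in \R{r}$ the quantity $\min_{\Alpha}\|Q_X Y c - Q_X Z (\Alpha c)\|^2$ equals $c^\tsp Y^\tsp Q_{[Z, X]} Y c$ (via $Q_X - P_{Q_X Z} = Q_{[Z, X]}$), one sees that the residual matrix dominates $Y^\tsp Q_{[Z, X]} Y$ in the Loewner order, and the two sets of hypotheses in Proposition \ref{prop:prop_post} guarantee $D + Y^\tsp Q_{[Z, X]} Y \in \SPD{r}$. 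By monotonicity of the determinant on $\SPD{r}$ this gives $\vert E(\alpha)\vert^{c/2} \geq \vert D + Y^\tsp Q_{[Z,X]}Y\vert^{c/2}$.

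Next I would obtain the upper Loewner bound. Setting $W = Q_X Y - Q_X Z\Alpha$, we have $W^\tsp W \leq \|W\|^2 I_r$, and the triangle inequality together with $\|\Alpha\| \leq \|\Alpha\|_F = \|\alpha\| \leq \sqrt T$ yields $\|W\| \leq \|Q_X Y\| + \|Q_X Z\|\sqrt{T}$. Hence $E(\alpha) \leq D + (\|Q_X Y\| + \|Q_X Z\|\sqrt{T})^2 I_r$. Because $\Sigma^{-1} \succ 0$ and the trace is linear, this gives $\etr(-\tfrac{1}{2}\Sigma^{-1} E(\alpha)) \geq \etr(-\tfrac{1}{2}\Sigma^{-1}[D + (\|Q_X Y\| + \|Q_X Z\|\sqrt T)^2 I_r])$. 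Combining the two bounds and recognizing the right-hand side as $\veps$ times the inverse Wishart density $f^\star(\Sigma)$ with scale $D + (\|Q_X Y\| + \|Q_X Z\|\sqrt T)^2 I_r$ and the same degrees of freedom, with the constant $\veps$ exactly as in the statement, yields $f(\Sigma \mid \alpha, Y, X) \geq \veps f^\star(\Sigma)$ uniformly on $\{\|\alpha\|^2 \leq T\}$.

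To finish, I substitute this lower bound into \eqref{eq:transition_M} and apply Tonelli to obtain
\[
 K_{\Alpha}(\alpha, A) \geq \veps \iint I_A(\alpha^\star) f(\alpha^\star \mid \Sigma, Y, X) f^\star(\Sigma)\,\dd \Sigma\,\dd \alpha^\star =: \veps R(A),
\]
and $R$ is a probability measure because it is the $\alpha$-marginal of the joint distribution obtained by sampling $\Sigma$ from $f^\star$ and then $\alpha^\star$ from the conditional $f(\cdot \mid \Sigma, Y, X)$. Positivity of $\veps$ is immediate since both determinants are of $\SPD{r}$ matrices and $c > 0$ under the hypotheses of Proposition \ref{prop:prop_post}. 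The main obstacle is the Loewner lower bound $E(\alpha) \geq D + Y^\tsp Q_{[Z, X]} Y$, which requires a matrix-valued Frisch--Waugh-type argument rather than a scalar least squares comparison; everything else is operator-norm bookkeeping.
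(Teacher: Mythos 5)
Your proposal is correct and follows essentially the same route as the paper's proof: uniformly lower-bounding $f(\Sigma\mid\alpha,Y,X)$ on the sublevel set by $\veps$ times a fixed inverse Wishart density, with the determinant bound via a Frisch--Waugh--Lovell argument (the paper writes $E^\tsp Q_X E - E^\tsp Q_{[Z,X]}E$ as an explicit Gram matrix rather than minimizing quadratic forms, but this is the same idea) and the trace bound via the identical spectral/Frobenius norm estimates. The only cosmetic difference is that you normalize the minorizing function into $\veps f^\star$ up front, whereas the paper works with the unnormalized $g$ and sets $\veps = \int g$.
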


\begin{proof}
  We will prove that there exists a function $g:\SPD{r}\to (0, \infty)$, depending
  on the data and hyperparameters, such that $\int g(\Sigma)\,\dd \Sigma > 0$ and
  $g(\Sigma) \leq f(\Sigma\mid \Alpha, Y)$ for every $\alpha$ such that $\Vert
  \alpha \Vert^2 \leq T$, or, equivalently, $\Vert \Alpha\Vert_F^2 \leq T$. This
  suffices since if such a $g$ exists, then we may take $\veps = \int g(\Sigma)\,
  \dd \Sigma$ and define the distribution $R$ by, for any Borel set $A \subseteq
  \R{qr^2}$, \[R(A) = \frac{1}{\veps} \iint I_A(\alpha) f(\alpha \mid \Sigma, Y)
  g(\Sigma) \, \dd \alpha\, \dd \Sigma.\]

 Let $c = n + a - p - r - 1$ and $E = Y - Z\Alpha$ so that $f(\Sigma\mid \Alpha,
 Y)$ can be written
 \begin{align*}
  \frac{\vert D +  E^\tsp Q_X E\vert^{c/2}}{2^{cr/2}\Gamma_r(c/2)}\vert
  \Sigma\vert^{-\frac{n + a - p}{2}}\etr\left(-\frac{1}{2}\Sigma^{-1}[D + E^\tsp
  Q_X E]\right).
 \end{align*}
 To establish existence of a $g$ with the desired properties we will lower bound
 the first and third term in $f(\Sigma\mid \Alpha, Y)$ using two inequalities,
 namely
 \[
  \vert D + E^\tsp Q_X E\vert \geq \vert D + Y^\tsp Q_{[Z, X]}Y\vert
 \]
 and, for every $\Alpha$ such that $\Vert \Alpha\Vert_F^2 \leq T$,
 \[
  \tr\left[\Sigma^{-1}E^\tsp Q_X E\right] \leq  \tr\left[\Sigma^{-1}\left(\Vert
  Q_X Y\Vert + \Vert Q_X Z\Vert \sqrt{T} \right)^2\right].
 \]
 We prove the former inequality first. Since $E^\tsp Q_{[Z, X]}E = Y^\tsp Q_{[Z,
X]}Y$, it suffices to prove that $\vert D + E^\tsp Q_X E\vert \geq \vert D +
E^\tsp Q_{[Z, X]}E\vert$. For this, Lemma \ref{lem:sum_pd_psd}.3 says it is
enough to prove that $E^\tsp Q_X E - E^\tsp Q_{[Z, X]} E$ is SPSD. But the
Frisch--Waugh--Lovell theorem \citep[Section 2.4]{MacKinnon.Davidson2003} says
$E^\tsp Q_{[Z, X]}E = (Q_X E)^\tsp Q_{Q_X Z}(Q_X E)$, and therefore
 \begin{align*}
  E^\tsp Q_X E - E^\tsp Q_{[Z, X]} E & = (Q_X E)^\tsp (I_n - Q_{Q_X Z})Q_X E \\
  & = [(I_n - Q_{Q_X Z})Q_X E]^\tsp [(I_n - Q_{Q_X Z})Q_X E],
 \end{align*}
 which is clearly SPSD.

 For the second inequality we get, using the triangle inequality,
submultiplicativity, and that the Frobenius norm upper bounds the spectral norm,
 \begin{align*}
  \Vert E^\tsp Q_X E\Vert & = \Vert(Q_X E)^\tsp Q_X E\Vert\\
    & \leq \Vert Q_X E\Vert^2\\
    & \leq (\Vert Q_X Y\Vert + \Vert Q_X Z\Vert \Vert \Alpha\Vert)^2\\
    & \leq (\Vert Q_X Y\Vert + \Vert Q_X Z\Vert \Vert \Alpha\Vert_F)^2\\
    & \leq (\Vert Q_X Y\Vert + \Vert Q_X Z\Vert \sqrt{T})^2\\
    & = : c_1.
 \end{align*}
 Since the spectral norm for SPSD matrices is the maximum eigenvalue, we have
 shown that $c_1I_r - E^\tsp Q_X E$ is SPSD. Thus, $\Sigma^{-1/2}(I_r c_1 -
 E^\tsp Q_X E)\Sigma^{-1/2}$ is also SPSD and, hence,
 \[
  \tr(\Sigma^{-1}E^\tsp Q_X E) = \tr(\Sigma^{-1/2}E^\tsp Q_X E \Sigma^{-1/2})
  \leq  \tr(\Sigma^{-1} c_1),
 \]
 which is what we wanted to show. We have thus established that $ f(\Sigma \mid
 \Alpha, Y)$ is greater than
 \[
  g(\Sigma) := \frac{\vert D + Y^\tsp Q_{[Z, X]} Y
  \vert^{c/2}}{2^{cr/2}\Gamma_r(c/2)}\vert \Sigma\vert^{-\frac{n + a -
  p}{2}}\etr\left(-\frac{1}{2}\Sigma^{-1}[D + I_r c_1]\right).
 \]
 Finally, the stated expression for $\veps = \int g(\Sigma)\,\dd\Sigma$, and
 that it is indeed positive, follows from that under the first set of conditions
 in Proposition~\ref{prop:prop_post} $D$ is SPD, and under the second set of
 conditions $Y^\tsp Q_{[Z, X]}Y$ is SPD by Lemma \ref{lem:partition_rank}; in
 either case, both $D + Y^\tsp Q_{[Z, X]}Y$ and $D + I_r c_1$ are SPD and,
 consequently, $g$ is proportional to an inverse Wishart density with scale
 matrix $D + I_r c_1$ and $c$ degrees of freedom.
\end{proof}

We are ready for the main result of this section.

\begin{thm} \label{thm:geo_erg}
  If $C \in \SPD{qr^2}$ and at least one of the two sets of conditions in
  Proposition~\ref{prop:prop_post} holds, then the transition kernels $K_C$,
  $K_G$, and $K_{\Alpha}$ are geometrically ergodic.
\end{thm}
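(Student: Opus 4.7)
The plan is to apply Theorem~\ref{thm:Rosenthal} to the marginal kernel $K_{\Alpha}$ with drift function $V(\alpha) = \Vert\alpha\Vert^{2}$, since all of the pieces have been assembled in the preceding lemmas. Once geometric ergodicity of $K_{\Alpha}$ is established, Lemma~\ref{lem:tv_dist} transfers it directly to $K_{G}$ and $K_{C}$.

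Concretely, I would first note that the constant $L$ produced by Lemma~\ref{lem:drift} does not depend on the conditioning argument $\alpha'$, so the drift inequality $\int V(\alpha) K_{\Alpha}(\alpha', \dd\alpha) \leq \lambda V(\alpha') + L$ holds for every $\lambda \geq 0$; in particular it holds for some fixed $\lambda \in [0, 1)$, say $\lambda = 1/2$. With $\lambda$ chosen, the condition in Theorem~\ref{thm:Rosenthal} requires a sub-level set parameter $T > 2L/(1 - \lambda)$. Since Lemma~\ref{lem:minor} supplies a positive minorization constant $\veps$ and an associated probability measure $R$ on the set $\{\alpha : \Vert\alpha\Vert^{2} \leq T\}$ for \emph{every} $T > 0$, I would simply pick any $T$ exceeding $2L/(1 - \lambda)$ to satisfy both requirements simultaneously.

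With the drift condition and minorization condition both verified for the chosen $V$, $\lambda$, $L$, $T$, and $\veps$, Theorem~\ref{thm:Rosenthal} yields a constant $\bar{\rho} \in (0, 1)$ (obtained by choosing any $c \in (0, 1)$ making $\bar{\rho} < 1$) such that $K_{\Alpha}$ is geometrically ergodic with rate at most $\bar{\rho}$. Finally, by Lemma~\ref{lem:tv_dist}, for every $\theta = (\alpha, \Beta, \Sigma) \in \Theta$,
\[
 \Vert K_{C}^{h}(\theta, \cdot) - F(\cdot \mid Y, X)\Vert_{TV} = \Vert K_{G}^{h}((\alpha, \Sigma), \cdot) - F_{\Alpha, \Sigma}(\cdot \mid Y, X)\Vert_{TV} \leq \Vert K_{\Alpha}^{h-1}(\alpha, \cdot) - F_{\Alpha}(\cdot \mid Y, X)\Vert_{TV},
\]
and the right-hand side is bounded by a constant (depending on $\alpha$) times $\bar{\rho}^{h-1}$. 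Hence $K_{G}$ and $K_{C}$ inherit geometric ergodicity at the same rate $\bar{\rho}$.

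There is no real obstacle here: all analytic work has been done in Lemmas~\ref{lem:drift} and~\ref{lem:minor} and in Lemma~\ref{lem:tv_dist}. The only mild point deserving care is the compatibility between the two hypotheses of Theorem~\ref{thm:Rosenthal}, namely that $T$ may be chosen large enough to satisfy $T > 2L/(1-\lambda)$ while still having a valid positive $\veps$ from Lemma~\ref{lem:minor}; this is immediate since the minorization is available for any $T > 0$.
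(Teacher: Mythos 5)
Your proposal is correct and follows the paper's own proof essentially verbatim: verify the drift condition via Lemma~\ref{lem:drift} with a fixed $\lambda \in [0,1)$, obtain the minorization from Lemma~\ref{lem:minor} for any $T > 2L/(1-\lambda)$, conclude geometric ergodicity of $K_{\Alpha}$ from Theorem~\ref{thm:Rosenthal}, and transfer to $K_G$ and $K_C$ via Lemma~\ref{lem:tv_dist}. The only difference is that you spell out the (immediate) compatibility of the choice of $T$ with the minorization, which the paper leaves implicit.
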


\begin{proof}
 By Lemma \ref{lem:tv_dist} it suffices to show it for $K_{\Alpha}$. Lemma
\ref{lem:drift} establishes that a drift condition \eqref{eq:drift_cond} holds
for $K_{\Alpha}$ with $V(\alpha) = \Vert \alpha\Vert^2$ and all $\lambda \in [0,
1)$, while Lemma~\ref{lem:minor} establishes a minorization condition
\eqref{eq:minor_cond} for $K_{\Alpha}$.  The claim now follows immediately from
Theorem~\ref{thm:Rosenthal}.
\end{proof}

We note that, since $V(\alpha)$ is unbounded off compact sets and it is routine
to show that $K_{\Alpha}$ is weak Feller \citep[][p. 124]{Meyn.Tweedie2011}, the
theorem can in fact be proven without using Lemma \ref{lem:minor} \citep[][Lemma
15.2.8]{Meyn.Tweedie2011}. However, with Lemma \ref{lem:minor} we also get an
explicit bound on the convergence rate, through Theorem \ref{thm:Rosenthal},
which will be useful in what follows.

\section{Asymptotic stability}
\label{sec:asy_stab}
We consider asymptotically stable geometric ergodicity as $n\to \infty$.
Motivated by Lemma \ref{lem:tv_dist}, we focus on the sequence of kernels
$\{K_{\Alpha, n}\}$, where $K_{\Alpha, n}$ is the kernel $K_{\Alpha}$ with the
dependence on the sample size $n$ made explicit; we continue to write
$K_{\Alpha}$ when $n$ is arbitrary but fixed. Similar notation applies to the
kernels $K_C$ and $K_G$.

It is clear that as $n$ changes so do the data $Y$ and $X$. Treating $Y$ and $X$
as fixed is not appropriate unless we only want to discuss asymptotic properties
holding pointwise, i.e. for particular, or all, paths of the stochastic process
$\{(Y_t, X_t)\}$, which is unnecessarily restrictive. We assume that $(Y_t,
X_t)$, $t = 1, 2, \dots$ are defined on a common probability space so the joint
distribution of $Y$ and $X$ exists for every $n$, and we allow for model
misspecification; that is, $\{Y_t\}$ and $\{X_t\}$ need not satisfy
\eqref{eq:varx}.

Recall that Theorem~\ref{thm:Rosenthal} is instrumental to our strategy: if
$K_{\Alpha, n}$ satisfies Theorem~\ref{thm:Rosenthal} with some $V = V_n$,
$\lambda = \lambda_n$, $L = L_n$, $\veps = \veps_n$, and $T = T_n$, then there
exists a $\bar{\rho}_n < 1$ that upper bounds $\rho^\star_n$. We focus on the
properties of those $\bar{\rho}_n$, $n = 1, 2, \dots$, as $n$ tends to infinity.
Throughout the section we assume that the priors, and in particular the
hyperparameters, are the same for every $n$.  The latter is not necessary and
could be replaced by appropriate bounds on how the hyperparameters change with
$n$; however, doing so complicates notation and does not lead to fundamental
insights in our setting. For example, $C$ could be allowed to vary with $n$ as
long as its eigenvalues are bounded away from zero and from above.

Clearly, the choice of drift function $V_n$ is important for the upper bound
$\bar{\rho}_n$ one obtains. The drift function used for the small-$n$ regime is
not well suited for the asymptotic analysis in this section. Essentially,
problems occur if $\lambda_n \to 1$, $L_n \to \infty$, or $\veps_n \to 0$ so
that the corresponding upper bounds satisfy $\lim_{n \to \infty} \bar{\rho}_n =
1$ almost surely \citep[][Proposition 2]{Qin.Hobert2019}.  Consider
Theorem~\ref{thm:geo_erg}.  Since we can take $\lambda_n = 0$ for all $n$, only
$L_n$ or $\veps_n$ can lead to problems.  Because $L_n$ is essentially a
quadratic in $\alpha$, it is clear that $L_n(Y, X) = O_\pr(1)$ if and only if
$\Vert \hat{\alpha}\Vert = O_\pr(1)$, while we show in Appendix
\ref{app:inadequacy} that $\veps_n \to 0$ almost surely as $n\to \infty$ if
\[
 n \Vert Q_X Z\Vert^2/\Vert Q_{[Z, X]} Y \Vert^2 \to \infty.
\]
We expect this to occur in many relevant configurations. Indeed, we expect the
order of $\Vert Q_X Z\Vert$ will often be at least that of $\Vert Q_{[X, Z]}
Y\Vert$. To see why, consider the case without predictors  and data generated
according to the VARX. Then $\Vert Q_{[X, Z]} Y\Vert^2 = \Vert Q_Z Y\Vert^2 = n
\emax(Y^\tsp Q_Z Y / n)$, where $\emax(\cdot)$ denotes the maximum eigenvalue,
and $Y^\tsp Q_Z Y / n$ is the maximum likelihood estimator of $\Sigma$ which is
known to be consistent, for example, if data are generated from a stable VAR
with i.i.d. Gaussian innovations \citep{Lutkepohl2005}. For such VARs it also
holds that $Z^\tsp Z / n$ converges in probability to some SPD limit
\citep{Lutkepohl2005}, and hence $\Vert Z\Vert^2 = n \emax(Z^\tsp Z / n) =
O_\pr(n)$. Similar arguments can be made for any other data generating
processes for which $Y^\tsp Q_Z Y / n$ and $Z^\tsp Z / n$ are suitably bounded
in probability or almost surely as $n\to \infty$.

The intuition as to why the drift function that works in the small-$n$ regime is
not suitable for convergence complexity analysis is that the drift function
should be centered (minimized) at a point the chain in question can be expected
to visit often \citep{Qin.Hobert2019}. The function defined by $V(\alpha) =
\Vert \alpha\Vert^2$ is minimized when $\alpha = 0$, but there is in general no
reason to believe the $\alpha$-component of the chain will visit a neighborhood
of the origin often. On the other hand, if the number of observations grows fast
enough in comparison to other quantities and we suppose momentarily that the
data are generated from the VARX, then we expect the marginal posterior density
of $\Alpha$ to concentrate around the true $\Alpha$, i.e. the $\Alpha$ according
to which the data is generated. We also expect that for large $n$ the least
squares and maximum likelihood estimator $\hat{\Alpha} = (Z^\tsp Q_X Z)^+ Z^\tsp
Q_X Y$ is close to the true $\Alpha$. Thus, intuitively, the $\alpha$-component
of the chain should visit the vicinity of $\hat{\alpha} = \vecop(\hat{\Alpha})$
often. Formalizing and extending this intuition to cases where the model can be
misspecified, so that no true $\Alpha$ exists, leads to the main result of the
section.

Let us re-define $V:\R{qr^2} \to [0, \infty)$ by $V(\alpha) = \Vert Q_X Z \Alpha
- Q_X Z \hat{\Alpha}\Vert_F^2 = \Vert (I_r \otimes Q_X Z)(\alpha -
\hat{\alpha})\Vert^2$. We will use the following lemma to verify the drift
condition in \eqref{eq:drift} for all large enough $n$ almost surely or with
probability tending to one. The probabilistic qualifications are needed because,
in contrast to in the small $n$ setting, $V$ here depends on the data.
Accordingly, the $\lambda$ given in the lemma depends on $\hat{\Alpha}$ and,
consequently, need not be less than one for a fixed $n$ or a particular sample.

\begin{lem} \label{lem:drift_asy}
 If $[Z, X]$ has full column rank, $C \in \SPD{qr^2}$, at  one of the two sets
 of conditions in Proposition~\ref{prop:prop_post} holds,
 \[
  \lambda = \frac{qr +  \left(\Vert C\Vert^{1/2}\Vert \hat{\Alpha}\Vert_F + \Vert
  C^{-1}\Vert^{1/2}\Vert C m\Vert \right)^2}{n + a - 2r - p - 2},~~ \text{and}~
  L = \lambda \tr(D) + \lambda \Vert Q_{[Z, X]}Y\Vert_F^2,
 \]
 then
 \begin{align*}
  \int V(\alpha)K_{\Alpha}(\alpha',\dd \alpha) \leq \lambda V(\alpha') + L.
 \end{align*}
\end{lem}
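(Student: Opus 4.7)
The plan is to compute $\int V(\alpha) K_{\Alpha}(\alpha', \dd\alpha)$ by conditioning first on $\Sigma$ (using the Gaussian law of $\alpha\mid\Sigma$ from Lemma~\ref{lem:collapse_dist}) and then integrating out $\Sigma$ using its inverse-Wishart law given $\alpha'$. Introduce $W = I_r \otimes Q_X Z$ so that $V(\alpha) = \|W(\alpha - \hat\alpha)\|^2$ and $W^\tsp W = I_r \otimes Z^\tsp Q_X Z$. Fubini and the standard moment identity for the normal distribution give $\E[V(\alpha)\mid \Sigma] = \|W(u - \hat\alpha)\|^2 + \tr(W^\tsp W B^{-1})$, where $B = C + A$ with $A = \Sigma^{-1}\otimes Z^\tsp Q_X Z$. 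A direct calculation using $(\Sigma^{-1}\otimes Z^\tsp Q_X)y = A\hat\alpha$ (which in turn follows from $[Z,X]$ having full column rank, so $Z^\tsp Q_X Z \hat\Alpha = Z^\tsp Q_X Y$) yields the convenient closed form $u - \hat\alpha = B^{-1} C(m - \hat\alpha)$.

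For the trace term I would use $B \succeq A$, hence $B^{-1}\preceq A^{-1} = \Sigma \otimes (Z^\tsp Q_X Z)^{-1}$, so
\[
\tr(W^\tsp W B^{-1}) \leq \tr\bigl((I_r \otimes Z^\tsp Q_X Z)(\Sigma \otimes (Z^\tsp Q_X Z)^{-1})\bigr) = qr\,\tr(\Sigma).
\]
For the quadratic-form term the key operator inequality is
\[
W^\tsp W \preceq \|\Sigma\|\, B,
\]
which I would verify by noting that $\|\Sigma\|\Sigma^{-1} - I_r \succeq 0$ and $Z^\tsp Q_X Z, C \succeq 0$, so $\|\Sigma\|B - W^\tsp W = \|\Sigma\|C + \bigl((\|\Sigma\|\Sigma^{-1} - I_r)\otimes Z^\tsp Q_X Z\bigr) \succeq 0$. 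Combined with $B \succeq C$ this gives $B^{-1} W^\tsp W B^{-1} \preceq \|\Sigma\| C^{-1}$, whence $\|W B^{-1} C^{1/2}\|^2 \leq \|\Sigma\|$. Applying the triangle inequality to $u - \hat\alpha = B^{-1}(Cm - C\hat\alpha)$, factoring out $B^{-1}C^{1/2}$ and bounding $\|C^{1/2}m\|\leq \|C^{-1}\|^{1/2}\|Cm\|$ and $\|C^{1/2}\hat\alpha\|\leq \|C\|^{1/2}\|\hat\Alpha\|_F$, and finally using $\|\Sigma\|\leq\tr(\Sigma)$, I obtain $\|W(u - \hat\alpha)\|^2 \leq c_0 \tr(\Sigma)$ with $c_0 = (\|C\|^{1/2}\|\hat\Alpha\|_F + \|C^{-1}\|^{1/2}\|Cm\|)^2$. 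Adding the two pieces gives $\E[V(\alpha)\mid\Sigma] \leq (qr + c_0)\tr(\Sigma)$.

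It remains to take expectation with respect to $\Sigma \mid \alpha'$. By a Frisch--Waugh--Lovell decomposition---orthogonality of $Q_{[Z,X]}Y$ and $Q_X Z(\Alpha' - \hat\Alpha)$ in the range of $Q_X$---the inverse-Wishart scale in Lemma~\ref{lem:collapse_dist} splits as
\[
(Y - Z\Alpha')^\tsp Q_X (Y - Z\Alpha') = Y^\tsp Q_{[Z,X]} Y + (\Alpha' - \hat\Alpha)^\tsp Z^\tsp Q_X Z (\Alpha' - \hat\Alpha),
\]
and the trace of the second summand is exactly $V(\alpha')$. The inverse-Wishart mean formula (degrees of freedom $n+a-p-r-1$ minus $r+1$ gives the denominator $n+a-p-2r-2$) then yields
\[
\E[\tr(\Sigma)\mid\alpha'] = \frac{\tr(D) + \|Q_{[Z,X]}Y\|_F^2 + V(\alpha')}{n + a - p - 2r - 2},
\]
and multiplying through by $qr + c_0$ produces precisely $\lambda V(\alpha') + L$.

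The main obstacle is Step~4, namely identifying and verifying the operator inequality $W^\tsp W \preceq \|\Sigma\| B$ and chaining it with $B \succeq C$ to get $\|WB^{-1}C^{1/2}\|^2 \leq \|\Sigma\|$; without this, the natural bound on $\|W(u - \hat\alpha)\|$ would carry an unwanted dependence on $\Sigma$ through $B^{-1}$ that does not integrate to the clean $\tr(\Sigma)$ factor needed to absorb the outer inverse-Wishart expectation.
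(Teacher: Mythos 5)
Your proof is correct and reaches exactly the constants in the statement; the overall skeleton (normal second-moment formula, separate bounds $qr\,\tr(\Sigma)$ for the trace term and $c_0\Vert\Sigma\Vert$ for the mean term, then the orthogonal split of $(Y-Z\Alpha')^\tsp Q_X(Y-Z\Alpha')$ and the inverse-Wishart mean) is the same as the paper's. The one place where you genuinely diverge is the bound on $\Vert W(u-\hat\alpha)\Vert$. The paper writes $u-\hat\alpha$ as a difference of two pieces, applies the Woodbury identity $H^{-1}-B^{-1}=H^{-1}(C^{-1}+H^{-1})^{-1}H^{-1}$ with $H=\Sigma^{-1}\otimes Z^\tsp Q_XZ$, and then bounds the factors $\Vert(I_r\otimes Q_XZ)H^{-1/2}\Vert=\Vert\Sigma^{1/2}\Vert$ and $\Vert H^{-1/2}(H^{-1}+C^{-1})^{-1}\Vert\leq\Vert C\Vert^{1/2}$ separately, with a parallel computation for the $Cm$ term. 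You instead observe the exact identity $u-\hat\alpha=B^{-1}C(m-\hat\alpha)$ (which follows from $B=C+A$ and $A\hat\alpha=(\Sigma^{-1}\otimes Z^\tsp Q_X)\vecop(Q_XY)$) and extract the $\Vert\Sigma\Vert$ factor through the single operator inequality $W^\tsp W\preceq\Vert\Sigma\Vert B$ chained with $B\succeq C$ to get $\Vert WB^{-1}C^{1/2}\Vert^2\leq\Vert\Sigma\Vert$. Your route is more economical: it avoids the Woodbury manipulation and the $H^{-1/2}$ factorization entirely, handles the $m$-term and the $\hat\alpha$-term in one stroke, and makes transparent why the bound is centered at $\hat\alpha$ (the prior pull $C(m-\hat\alpha)$ is the only source of discrepancy between $u$ and $\hat\alpha$). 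The paper's term-by-term treatment yields the identical constant, so nothing is lost or gained quantitatively; both arguments require $Z^\tsp Q_XZ$ invertible, which you correctly tie to the full-column-rank hypothesis.
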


\begin{proof}
 Suppose first that $Q_X = I_n$ and notice that $Z$ has full column rank, and
hence $(Z^\tsp Z)^{-1}$ exists. Since $f(\alpha \mid \Sigma, Y, X)$ is a
multivariate normal density, standard expressions for the moments of the
multivariate normal distribution gives
 \begin{align} \label{eq:drift}
  \int V(\alpha, \Sigma)f(\alpha\mid \Sigma, Y, X)\,\dd \alpha = \Vert (I_r
  \otimes Z)(u - \hat{\alpha})\Vert^2 + \tr((I_r \otimes Z) B^{-1}(I_r \otimes
  Z)^\tsp).
 \end{align}
 For the second term we use cyclical invariance of the trace to write
 \begin{align*}
   & \tr\left((I_r \otimes Z) B^{-1}(I_r \otimes Z)^\tsp\right)\\
   & = \tr\left[(I_r \otimes Z^\tsp Z)
       (C + \Sigma^{-1}\otimes Z^\tsp Z)^{-1}\right]\\
   & = \tr\left[(I_r \otimes Z^\tsp Z)^{1/2}(C + \Sigma^{-1}\otimes
       Z^\tsp Z)^{-1}(I_r \otimes Z^\tsp Z)^{1/2}\right].
 \end{align*}
 Since $C$ and $\Sigma^{-1}\otimes Z^\tsp Z$ are both SPD, the last expression
 is in the form required by Lemma \ref{lem:sum_pd_psd}, and hence
 \begin{align*}
  \tr\left((I_r \otimes Z) B^{-1}(I_r \otimes Z)^\tsp\right)
   & \leq \tr\left[(I_r \otimes Z^\tsp Z)^{1/2}(\Sigma^{-1}\otimes
      Z^\tsp Z)^{-1}(I_r \otimes Z^\tsp Z)^{1/2} \right] \\
   & = \tr\left[\Sigma \otimes Z(Z^\tsp Z)^{-1}Z^\tsp\right]\\
   & = \tr(\Sigma)\tr[Z(Z^\tsp Z)^{-1}Z^\tsp]\\
   & = \tr(\Sigma)qr,
 \end{align*}
 where the last line uses that the trace of a projection matrix is the dimension
 of the space onto which it is projecting. Focusing now on the first term on the
 right hand side in \eqref{eq:drift} we have, defining $H = \Sigma^{-1} \otimes
 Z^\tsp Z$ and using $\hat{\alpha} = H^{-1}(\Sigma^{-1}\otimes Z^\tsp)y$, that
 \[
  \Vert(I_r \otimes Z)(u - \hat{\alpha})\Vert = \Vert (I_r \otimes
  Z)(\hat{\alpha} - B^{-1}(C m + [\Sigma^{-1}\otimes Z^\tsp]y))\Vert
 \]
 is upper bounded by
 \begin{align} \label{eq:drift_2}
  \Vert (I_r \otimes Z)(H^{-1} - B^{-1})(\Sigma^{-1}\otimes Z^\tsp)y\Vert +
  \Vert (I_r \otimes Z) B^{-1}Cm\Vert.
 \end{align}
 Moreover, since $B = C + H$ the Woodbury identity gives $H^{-1} - B^{-1} =
H^{-1}(C^{-1} + H^{-1})^{-1}H^{-1}$ so that the first term in \eqref{eq:drift_2}
can be upper bounded as follows:
 \begin{align*}
   & \Vert (I_r \otimes Z)(H^{-1} - B^{-1})(\Sigma^{-1}\otimes Z^\tsp)y\Vert\\
   & =\Vert (I_r \otimes Z) H^{-1}(H^{-1} + C^{-1})^{-1}H^{-1}(\Sigma^{-1}
      \otimes Z^\tsp)y\Vert\\
   & = \Vert (I_r \otimes Z) H^{-1}(H^{-1} + C^{-1})^{-1}\hat{\alpha}\Vert\\
   & \leq \Vert (I_r \otimes Z) H^{-1/2}\Vert \Vert H^{-1/2}(H^{-1} +
    C^{-1})^{-1} \Vert \Vert \hat{\alpha}\Vert.
 \end{align*}
 Here, the power $G^t$, $t \in \R{}$, for a SPD matrix $G$ is defined by taking
the spectral decomposition $G = U_G \diag(\emax(G), \dots, \emin(G))U_G^\tsp$,
where $\emax(\cdot)$ and $\emin(\cdot)$ denote the largest and smallest
eigenvalues, respectively, and setting
\[
  G^t = U_G\diag(\emax^t(G), \dots, \emin^t(G))U_G^\tsp.
\]
Now by standard properties of eigenvalues and
eigenvectors of Kronecker products \citep[][Theorem 4.2.12]{Horn.Johnson1991} we
get
 \[
  \Vert (I_r \otimes Z)H^{-1/2}\Vert = \Vert\Sigma^{1/2}\otimes Z [Z^\tsp
  Z]^{-1/2}\Vert = \Vert \Sigma^{1/2}\Vert \Vert Z(Z^\tsp Z)^{-1/2}\Vert = \Vert
  \Sigma^{1/2}\Vert.
 \]
 In addition, using Lemma \ref{lem:sum_pd_psd},
 \begin{align*}
  \Vert H^{-1/2}(H^{-1} + C^{-1})^{-1}\Vert & = \emax^{1/2}\left((H^{-1} +
  C^{-1})^{-1}H^{-1}(H^{-1} + C^{-1})^{-1}\right) \\
  & \leq \emax^{1/2}\left((H^{-1} + C^{-1})^{-1}\right)\\
  & \leq \emax^{1/2}(C)\\
  & = \Vert C\Vert^{1/2}.
 \end{align*}
 It remains to deal with the second term in \eqref{eq:drift_2}. Using a similar
 technique as with the previous term, applying sub-multiplicativity and Lemma
 \ref{lem:sum_pd_psd} twice, we have
 \begin{align*}
   & \Vert (I_r \otimes Z)B^{-1}Cm\Vert\\
   & = \Vert(\Sigma^{1/2}\otimes I_n)(\Sigma^{-1/2} \otimes I_n)(I_r \otimes Z)
   B^{-1}Cm\Vert\\
   & \leq \Vert \Sigma^{1/2}\Vert \Vert (\Sigma^{-1/2} \otimes Z)
     (C + \Sigma^{-1}\otimes Z^\tsp Z)^{-1}\Vert \Vert Cm\Vert\\
   & = \Vert \Sigma^{1/2}\Vert \emax^{1/2}\left([C + \Sigma^{-1}\otimes
       Z^\tsp Z]^{-1}[\Sigma^{-1}\otimes Z^\tsp Z]
       [C + \Sigma^{-1}\otimes Z^\tsp Z]^{-1} \right)\Vert Cm \Vert \\
   & \leq \Vert \Sigma^{1/2}\Vert \emax^{1/2}([C + \Sigma^{-1}\otimes
     Z^\tsp Z]^{-1})\Vert Cm\Vert\\
   & \leq \Vert \Sigma^{1/2}\Vert \Vert C^{-1} \Vert^{1/2} \Vert C m\Vert.
 \end{align*}
 Putting things together we have shown that, for any $\Sigma$,
 \begin{align*}
  \Vert (I_r \otimes Z)(\hat{\alpha} - u)\Vert \leq \Vert
  \Sigma^{1/2}\Vert\left( \Vert C\Vert^{1/2}\Vert \hat{\alpha}\Vert + \Vert
  C^{-1}\Vert^{1/2} \Vert C m\Vert\right),
 \end{align*}
 and hence we get from \eqref{eq:drift}
 \begin{align*}
  \int V(\alpha)f(\alpha \mid \Sigma, Y, X)\, \dd \alpha
    & \leq \Vert \Sigma\Vert\left(\Vert C\Vert^{1/2}\Vert \hat{\alpha}\Vert +
    \Vert C^{-1}\Vert^{1/2} \Vert C m\Vert \right)^2 + qr \tr(\Sigma).
 \end{align*}
 The proof for the case $Q_X = I_n$ is completed by upper bounding $\Vert
 \Sigma\Vert \leq \tr(\Sigma)$, integrating both sides with respect to
 $f(\Sigma\mid \alpha', Y, X)\,\dd\Sigma$, and noting that
 \begin{align*}
   & \int \tr(\Sigma)f(\Sigma\mid \alpha', Y, X)\,\dd\Sigma\\
   & = \frac{1}{n + a - 2r - p - 2}\tr\left(D + (Y - Z\Alpha')^\tsp
    (Y - Z\Alpha')\right)\\
   & \leq \frac{1}{n + a - 2r - p - 2}\left(\tr(D) + \Vert Q_ZY\Vert^2_F +
      \Vert Z\hat{\Alpha} - Z\Alpha'\Vert_F^2\right),
 \end{align*}
 where we have used that $(Y - Z \Alpha')^\tsp(Y - Z\Alpha') = (Y - Z
 \Alpha')^\tsp P_Z (Y - Z\Alpha') + (Y - Z \Alpha')^\tsp Q_Z(Y - Z\Alpha')$, and
 that $P_ZY = Z\hat{\Alpha}$. The general case is recovered by replacing $Z$ and
 $Y$ by $Q_X Z$ and $Q_X Y$, respectively, and invoking Lemma
 \ref{lem:partition_rank}. That $Z^\tsp Q_X Z$ is invertible also in the general
 case follows from the same lemma.
\end{proof}

\begin{lem}\label{lem:minor_asy}
 If at least one of the two sets of conditions in
 Proposition~\ref{prop:prop_post} holds, then for any $T > 0$ and $\alpha =
 \vecop(\Alpha)$ such that $\Vert Q_X Z\hat{\Alpha} - Q_X Z\Alpha\Vert_F^2 \leq
 T$, there exists a probability measure $R$ and
 \[
  \veps = \left(\frac{\vert D + Y^\tsp Q_{[Z, X]} Y\vert}{\vert D +
  Y^\tsp Q_{[Z, X]} Y + I_{r}T\vert}\right)^{(n + a - p - r - 1)/2} > 0
 \]
 such that
 \[
  K_{\Alpha}(\alpha, \cdot) \geq \veps R(\cdot).
 \]
\end{lem}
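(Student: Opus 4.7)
The plan is to mirror the strategy of Lemma \ref{lem:minor}: construct a function $g:\SPD{r}\to(0,\infty)$ with $g(\Sigma)\leq f(\Sigma\mid \Alpha,Y)$ for every $\Alpha$ satisfying the constraint $\Vert Q_X Z\hat{\Alpha} - Q_X Z\Alpha\Vert_F^2 \leq T$, and then set $\veps = \int g(\Sigma)\,\dd\Sigma$ and $R(A) = \veps^{-1}\iint I_A(\alpha) f(\alpha\mid \Sigma,Y)g(\Sigma)\,\dd\alpha\,\dd\Sigma$. Lemma~\ref{lem:collapse_dist} then gives $K_\Alpha(\alpha,\cdot)\geq \veps R(\cdot)$ immediately. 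The advantage over the proof of Lemma \ref{lem:minor} is that the centering of the constraint at $\hat{\Alpha}$ allows a Pythagorean identity to replace the cruder triangle-inequality bound used there, and this is exactly what yields the sharper $\veps$ in the statement.

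The key step is the orthogonal decomposition. With $E = Y - Z\Alpha$, write $Q_X E = (Q_X Y - Q_X Z\hat{\Alpha}) + (Q_X Z\hat{\Alpha} - Q_X Z\Alpha)$. Because $Q_X Z\hat{\Alpha}$ is the orthogonal projection of $Q_X Y$ onto the column space of $Q_X Z$ (using that $\hat{\Alpha} = (Z^\tsp Q_X Z)^+Z^\tsp Q_X Y$ and that $Q_X Z$ has full column rank under the assumptions, by Lemma~\ref{lem:partition_rank}), the two summands are orthogonal. By the Frisch--Waugh--Lovell identity used in Lemma~\ref{lem:minor}, the first summand's Gram matrix is $Y^\tsp Q_{[Z,X]} Y$, and therefore
\begin{equation*}
 E^\tsp Q_X E = Y^\tsp Q_{[Z,X]} Y + (Q_X Z\hat{\Alpha} - Q_X Z\Alpha)^\tsp(Q_X Z\hat{\Alpha} - Q_X Z\Alpha).
\end{equation*}
The constraint then implies that the second summand has spectral norm at most $\Vert Q_X Z\hat{\Alpha} - Q_X Z\Alpha\Vert_F^2 \leq T$, so it is dominated in the Loewner order by $T I_r$. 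Consequently $E^\tsp Q_X E \preceq Y^\tsp Q_{[Z,X]} Y + T I_r$, and also $E^\tsp Q_X E \succeq Y^\tsp Q_{[Z,X]} Y$ by discarding the second summand.

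These two Loewner inequalities control the two places where $\Alpha$ enters the inverse Wishart density $f(\Sigma\mid\Alpha,Y)$. Using Lemma~\ref{lem:sum_pd_psd}.3, the first inequality yields $\vert D + E^\tsp Q_X E\vert \geq \vert D + Y^\tsp Q_{[Z,X]} Y\vert$, giving a lower bound on the leading determinant factor. The second inequality yields $\tr(\Sigma^{-1}[D + E^\tsp Q_X E]) \leq \tr(\Sigma^{-1}[D + Y^\tsp Q_{[Z,X]} Y + T I_r])$, which upper bounds the argument of $\etr(\cdot)$ and hence gives a lower bound on the exponential factor. Putting both together, for every $\Alpha$ with $\Vert Q_X Z\hat{\Alpha} - Q_X Z\Alpha\Vert_F^2 \leq T$,
\begin{equation*}
 f(\Sigma\mid\Alpha,Y) \geq g(\Sigma) := \frac{\vert D + Y^\tsp Q_{[Z,X]} Y\vert^{c/2}}{2^{cr/2}\Gamma_r(c/2)}\vert\Sigma\vert^{-(n+a-p)/2}\etr\!\left(-\tfrac{1}{2}\Sigma^{-1}[D + Y^\tsp Q_{[Z,X]} Y + T I_r]\right),
\end{equation*}
with $c = n+a-p-r-1$.

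To finish, I would note that under either set of conditions in Proposition~\ref{prop:prop_post}, both $D + Y^\tsp Q_{[Z,X]}Y$ and $D + Y^\tsp Q_{[Z,X]}Y + T I_r$ are SPD (by Lemma~\ref{lem:partition_rank} in the second case), so $g$ is a positive multiple of an inverse Wishart density with scale $D + Y^\tsp Q_{[Z,X]}Y + T I_r$ and $c$ degrees of freedom. The multiplicative constant is precisely the ratio of normalizers, which evaluates $\int g(\Sigma)\,\dd\Sigma$ to the stated $\veps$. The main obstacle is recognizing the Pythagorean decomposition of $E^\tsp Q_X E$ relative to the least-squares fit, as this is what yields the Loewner bound $E^\tsp Q_X E \succeq Y^\tsp Q_{[Z,X]}Y$ needed to obtain an $\veps$ that does not blow up as the data accumulate; the remaining manipulations are routine given this observation.
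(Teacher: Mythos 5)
Your proposal is correct and follows essentially the same route as the paper's proof: the same Pythagorean decomposition of $E^\tsp Q_X E$ into $Y^\tsp Q_{[Z,X]}Y$ plus the Gram matrix of $Q_X Z\hat{\Alpha} - Q_X Z\Alpha$, the same spectral-norm-versus-Frobenius-norm bound to dominate the latter by $TI_r$, and the same construction of $g$ as a scaled inverse Wishart density whose integral gives the stated $\veps$. The only cosmetic difference is that you work directly with general $Q_X$ while the paper first treats $Q_X = I_n$ and then substitutes $Q_X Z$, $Q_X Y$ at the end.
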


\begin{proof}
 The proof idea is similar to that for Lemma \ref{lem:minor}. We prove that
 there exists a $g:\SPD{r} \to [0, \infty)$, depending on the data and the
 hyperparameters, such that $\int g(\Sigma)\,\dd\Sigma = \veps > 0$ and
 $g(\Sigma) \leq f(\Sigma\mid \Alpha, Y, X)$ for every $\Alpha$ such that $\Vert
 Q_X Z\hat{\Alpha} - Q_X Z\Alpha\Vert_F^2 \leq T$.

 Assume first that $Q_X = I_n$ and let $c = n + a - r - p - 1$ be the degrees of
 freedom in the full conditional distribution for $\Sigma$. Using that $(Y -
 Z\Alpha)^\tsp (Y - Z\Alpha) - (Y - Z\Alpha)^\tsp Q_Z (Y - Z\Alpha)$ is SPSD and
 that $Q_Z Z = 0$, we get by Lemma \ref{lem:sum_pd_psd} that
 \[
  \vert D + (Y - Z\Alpha)^\tsp(Y - Z\Alpha)\vert \geq \vert D + Y^\tsp Q_Z
  Y\vert.
 \]
 Moreover, for any $\Alpha$ such that $\Vert Z\hat{\Alpha} - Z\Alpha\Vert_F^2
 \leq T$,
 \begin{align*}
   & \tr\left[\Sigma^{-1}(Y - Z\Alpha)^\tsp (Y - Z\Alpha)\right]\\
   & = \tr\left[\Sigma^{-1}Y^\tsp Q_Z Y + \Sigma^{-1 }(Y - Z\Alpha)^\tsp P_Z
       (Y - Z\Alpha)\right]\\
   & = \tr\left[\Sigma^{-1}Y^\tsp Q_Z Y + \Sigma^{-1}(Z \hat{\Alpha} -
       Z \Alpha)^\tsp (Z\hat{\Alpha} - Z\Alpha)\right]\\
   & \leq \tr\left[\Sigma^{-1}Y^\tsp Q_Z Y + \Sigma^{-1} \Vert Z\hat{\Alpha}
      -Z\Alpha\Vert^2\right]\\
   & \leq \tr\left[\Sigma^{-1}Y^\tsp Q_Z Y + \Sigma^{-1}T\right],
 \end{align*}
 where the first inequality follows from that $\Vert Z\hat{\Alpha} -
 Z\Alpha\Vert^2 = \Vert (Z\hat{\Alpha} - Z\Alpha)^\tsp ( Z\hat{\Alpha} -
 Z\Alpha)\Vert$ and that, therefore, $I_r \Vert Z\hat{\Alpha} - Z\Alpha\Vert^2 -
 (Z\hat{\Alpha} - Z\Alpha)^\tsp (Z\hat{\Alpha} - Z\Alpha)$ is SPSD, and the
 second inequality follows from that the Frobenius norm upper bounds the
 spectral norm, so that $\Vert Z\hat{\Alpha} - Z\Alpha^2 \Vert \leq \Vert
 Z\hat{\Alpha} - Z\Alpha\Vert_F^2 \leq T$.

 With the determinant and trace inequalities just established, we have that
$f(\Sigma\mid \Alpha, Y, X)$ is, for any $\Alpha$ satisfying the hypotheses,
lower bounded by
 \begin{align*}
  g(\Sigma):=\frac{\vert D + Y^\tsp Q_Z Y\vert^{c/2}}{2^{c r/2}\Gamma_r(c
  /2)}\vert \Sigma\vert^{-\frac{n + a -
  p}{2}}\etr\left(-\frac{1}{2}\Sigma^{-1}[D + Y^\tsp Q_Z Y + I_r T]\right).
 \end{align*}
 Noticing that $g$ so defined is proportional to an inverse Wishart density and
 using well known expression for its normalizing constant finishes the proof for
 the case where $Q_X = I_n$. The general case is recovered upon replacing $Z$
 and $Y$ by $Q_X Z$ and $Q_Z Y$ everywhere and invoking Lemma
 \ref{lem:partition_rank}.
\end{proof}

We are ready to state the main result of the section. Recall, $\emax(\cdot)$
denotes the largest eigenvalue of its argument matrix; let $\emin(\cdot)$
denote the smallest.

\begin{thm} \label{thm:asy_stab}
 If
 \begin{enumerate}
  \item [(a)] $C \in \SPD{qr^2}$,
  \item [(b)] there exists a constant $M > 0$ such that, with $W = [Y, Z, X] \in
  \R{n \times (r + qr + p)}$ and $S_W = W^\tsp W / n$, almost surely as $n\to
  \infty$,
  \[
    M^{-1} \leq \liminf_{n \to \infty}\emin(S_W) \leq \limsup_{n \to
    \infty}\emax(S_W) \leq M,
  \]
 \end{enumerate}
 then $\{K_{C, n}\}, \{K_{G, n}\}$, and $\{K_{\Alpha, n}\}$ are asymptotically
 geometrically ergodic almost surely.
\end{thm}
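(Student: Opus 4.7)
The plan is to apply Theorem~\ref{thm:Rosenthal} to the sequence $\{K_{\Alpha,n}\}$ using the drift function $V_n(\alpha) = \Vert (I_r \otimes Q_X Z)(\alpha - \hat{\alpha})\Vert^2$ and the two constants $\lambda_n$, $L_n$ supplied by Lemma~\ref{lem:drift_asy} together with the minorization constant $\veps_n$ supplied by Lemma~\ref{lem:minor_asy} for a suitable $T_n$. Lemma~\ref{lem:tv_dist} reduces the total variation analysis of $K_{C,n}$ and $K_{G,n}$ to that of $K_{\Alpha,n}$, so it will suffice to show that the resulting sequence $\{\bar{\rho}_n\}$ from Theorem~\ref{thm:Rosenthal} satisfies $\limsup_n \bar{\rho}_n < 1$ almost surely.

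The first main step is to translate assumption (b) into three quantitative asymptotic bounds. Because $S_W = W^\tsp W/n$ has eigenvalues sandwiched between $M^{-1}$ and $M$ eventually almost surely, every principal submatrix (by interlacing) and every Schur complement (by the block–matrix positive definiteness argument) inherits the same two‑sided bound. In particular, $Z^\tsp Q_X Z/n$, $X^\tsp X/n$, and $Y^\tsp Q_{[Z,X]}Y/n$ have eigenvalues in $[M^{-1},M]$ eventually a.s., and $[Z,X]$ and $[Y,Z,X]$ have full column rank for all large $n$ so the pseudoinverses in sight become ordinary inverses. From this I can read off three things: (i) $\Vert \hat{\Alpha}\Vert_F \le \Vert (Z^\tsp Q_X Z)^{-1}\Vert \cdot \Vert Z^\tsp Q_X Y\Vert_F = O(1)$ a.s., since the inverse is $O(1/n)$ and the right factor is $O(n)$; (ii) $\Vert Q_{[Z,X]}Y\Vert_F^2 \le \tr(Y^\tsp Y) = O(n)$ a.s.; (iii) $\emin(Y^\tsp Q_{[Z,X]}Y) \ge n/M$ a.s. Putting (i) into the formula for $\lambda_n$ shows $\lambda_n = O(1/n) \to 0$, and combining (ii) with $\lambda_n = O(1/n)$ shows $L_n = O(1)$; choose an arbitrary constant $L_\infty$ that dominates $L_n$ eventually a.s.

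The second main step concerns the minorization constant $\veps_n$ from Lemma~\ref{lem:minor_asy}. I would fix a finite constant $T_\infty > 2L_\infty$ and set $T_n = T_\infty$ for all large $n$; since $\lambda_n \to 0$ and $L_n \le L_\infty$ eventually, this satisfies the Theorem~\ref{thm:Rosenthal} requirement $T_n > 2L_n/(1-\lambda_n)$. Writing $A_n = D + Y^\tsp Q_{[Z,X]}Y$ and diagonalizing,
\[
 \log \veps_n = -\tfrac{n+a-p-r-1}{2}\sum_{i=1}^r \log\!\left(1 + \frac{T_\infty}{\lambda_i(A_n)}\right).
\]
Using (iii), each $\lambda_i(A_n) \ge n/M$, so $\log(1 + T_\infty/\lambda_i(A_n)) \le MT_\infty/n + o(1/n)$, and the prefactor $(n+a-p-r-1)/2$ is $O(n)$. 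Hence $\log \veps_n$ is bounded below by a finite constant eventually a.s., which gives a strictly positive limit inferior $\veps_\infty > 0$ for $\veps_n$.

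Finally, I plug these asymptotic bounds into the expression
\[
 \bar{\rho}_n = (1-\veps_n)^{c} \vee \left(\frac{1+2L_n+\lambda_n T_n}{1+T_n}\right)^{1-c}(1+2L_n+2\lambda_n T_n)^{c}.
\]
The first branch tends to at most $(1-\veps_\infty)^c < 1$ eventually a.s. For the second, the choice $T_\infty > 2L_\infty$ makes $(1+2L_\infty)/(1+T_\infty) < 1$; taking $c \in (0,1)$ small enough so that $(1+2L_\infty)^c$ is close enough to $1$ drives the whole product below $1$. Fixing any such $c$, both branches are eventually bounded by some $\bar{\rho}_\infty < 1$ a.s., which yields $\limsup_n \bar{\rho}_n \le \bar{\rho}_\infty < 1$ a.s. Combined with Lemma~\ref{lem:tv_dist}, this establishes asymptotically stable geometric ergodicity (a.s.) for $\{K_{\Alpha,n}\}$, $\{K_{G,n}\}$, and $\{K_{C,n}\}$. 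The step I expect to be most delicate is the second one: verifying that assumption (b) actually forces $\Vert \hat{\Alpha}\Vert_F$ to be $O(1)$ via Schur complement bounds on $Z^\tsp Q_X Z$, because this is where the large‑$n$ analysis crucially departs from the fixed‑$n$ argument in Section~\ref{sec:geo_erg}.
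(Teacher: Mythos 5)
Your proposal is correct and follows essentially the same route as the paper: reduce to $K_{\Alpha,n}$ via Lemma~\ref{lem:tv_dist}, then use Lemmas~\ref{lem:drift_asy} and~\ref{lem:minor_asy} to verify $\lambda_n = O(1/n)$, $L_n = O(1)$, a bounded choice of $T_n$, and $\liminf_n \veps_n > 0$, all via eigenvalue bounds on principal submatrices and Schur complements of $S_W$. The only cosmetic differences are that the paper bounds $\Vert\hat{\Alpha}\Vert_F$ through the Frisch--Waugh--Lovell theorem applied to the regression of $Y$ on $[Z,X]$ rather than your direct bound $\Vert(Z^\tsp Q_X Z)^{-1}\Vert\,\Vert Z^\tsp Q_X Y\Vert_F$, and that you are somewhat more explicit than the paper about why the four asymptotic bounds force $\limsup_n \bar{\rho}_n < 1$ for a suitable $c$.
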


\begin{proof}
 By Lemma \ref{lem:tv_dist}, it is enough to prove that $\limsup_{n \to
 \infty}\bar{\rho}_n < 1$ almost surely for the $\bar{\rho}_n$ corresponding to
 $K_{\Alpha, n}$. Inspecting the definition of $\bar{\rho}_n$ in Theorem
 \ref{thm:Rosenthal} one sees that it suffices to show that Lemmas
 \ref{lem:drift_asy} and \ref{lem:minor_asy} apply and that the $\lambda =
 \lambda_n$, $L= L_n$, $T = T_n$, and $\veps = \veps_n$ they give almost surely
 satisfy, respectively: (i) $\limsup_{n \to \infty} \lambda_n < 1$, (ii)
 $\limsup_{n \to \infty} L_n < \infty$, (iii) $\limsup_{n \to \infty} T_n <
 \infty$, and (iv) $\liminf_{n \to \infty} \veps_n  > 0$. The prior $f(\alpha)$
 is bounded since $C$ is positive definite by (a), and (b) implies $[Y, Z, X]$
 has full column rank almost surely for all large enough $n$, so Lemma
 \ref{lem:drift_asy} applies for all large enough $n$ almost surely. Moreover,
 the Frisch--Waugh--Lovell theorem \citep[Section 2.4]{MacKinnon.Davidson2003}
 says $\hat{\Alpha}$ is the upper $qr\times r$ block in the least squares
 coefficient estimate in the regression of $Y$ on $[Z, X]$, i.e. $([Z, X]^\tsp
 [Z, X])^{-1}[Z, X]^\tsp Y$. Hence, with probability tending to one,
 \begin{align*}
  \Vert \hat{\Alpha}\Vert_F & \leq \Vert ([Z, X]^\tsp [Z, X])^{-1}[Z, X]^\tsp
    Y \Vert_F\\
  & \leq  \sqrt{r} \Vert ([Z, X]^\tsp [Z, X])^{-1} [Z, X]^\tsp
     \Vert \Vert Y \Vert \\
  & \leq  \sqrt{r} M,
 \end{align*}
  which follows from upper bounding the Frobenius norm by the spectral norm times
  $\sqrt{r}$ and using the Minimax Principle \citep[Corollary
  III.1.2]{Bhatia2012}; in particular, since $[Z, X]^\tsp [Z, X] / n$ and $Y^\tsp
  Y / n$ are the trailing and leading block of $S_W$, respectively, their
  eigenvalues must be bounded between the smallest and largest of $S_W$, and so
  also between $M^{-1}$ and $M$ almost surely as $n\to \infty$. Since we have
  shown $\Vert \hat{\Alpha}\Vert_F = O(1)$, it follows that $\lambda_n = O(1/n)$,
  so (i) holds. That $\lambda_n = O(1/n)$ and $\Vert Q_{[X, Z]}Y\Vert_F^2 =
  \tr(Y^\tsp Q_{[X, Z]}Y) \leq \sqrt{r}\Vert Y^\tsp Y\Vert \leq \sqrt{r}n M$ give
  $L_n = O(1)$, i.e. (ii) holds, and hence we can pick a sequence $T_n > 2L_n(1 -
  \lambda_n)$, $n = 1, 2, \dots$, such that (iii) holds. For (iv), we have with
  $\tau_j$ denoting the $j$th eigenvalue of $D / n + Y^\tsp Q_{[X, Z]} Y / n$,
 \[
  \veps_n = \prod_{j = 1}^r \left(\frac{\tau_j + T_n / n}{\tau_j}\right)^{-(n + a
- p - r - 1)/2}.
 \]
 Now, we have picked $T_n$ so that $\bar{T} = \limsup_{n\to \infty} T_n <
 \infty$, and since $Y^\tsp Q_{[Z, X]}Y$ is the Schur complement of $[Z, X]^\tsp
 [Z, X]$ in $W^\tsp W$, its eigenvalues are bounded between $n M^{-1}$ and $n M$
 almost surely as $n\to \infty$ \citep[Theorem 5]{Smith1992}. Thus,
 $\tau_j > M^{-1}/2$ (say) for all large enough $n$ almost surely, for every
 $j$. Thus, almost surely,
 \[
  \liminf_{n\to \infty} \veps_n \geq \lim_{n\to \infty} \left(\frac{M^{-1}/2 +
  \bar{T} / n}{M^{-1}/2}\right)^{- r (n + a - p - r - 1)/2} > 0,
 \]
 where the final inequality follows from that the fraction in parentheses is $1
 + (2^{-1}M^{-1}\bar{T}) / n$ and that the exponent is of the same order as $n$;
 this finishes the proof.
\end{proof}

If assumption (b) is relaxed to holding with probability tending to one instead
of almost surely, then the conclusion can be weakened accordingly to give the
following corollary. We omit the proof since it is essentially the same as the
proof of Theorem \ref{thm:asy_stab}, arguing that the necessary conditions hold
with probability tending to one.

\begin{corol}
 If $C\in \SPD{qr^2}$ and there exists an $M > 0$ such that $M^{-1} \leq
 \emin(S_W) \leq \emax(S_W) \leq M$ with probability tending to one as $n\to
 \infty$, then $\{K_{C, n}\}, \{K_{G, n}\}$, and $\{K_{\Alpha, n}\}$ are
 asymptotically geometrically ergodic in probability.
\end{corol}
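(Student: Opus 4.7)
The plan is to mirror the proof of Theorem \ref{thm:asy_stab} step for step, replacing the almost sure control of the spectrum of $S_W$ by control on a sequence of events whose probabilities tend to one. By Lemma \ref{lem:tv_dist} it suffices to construct upper bounds $\bar{\rho}_n$ from Theorem \ref{thm:Rosenthal} for the geometric convergence rate of $K_{\Alpha, n}$ satisfying $\pr(\bar{\rho}_n < 1) \to 1$; via Lemma \ref{lem:tv_dist} this then transfers to $\{K_{G, n}\}$ and $\{K_{C, n}\}$.

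Let $E_n = \{M^{-1} \leq \emin(S_W) \leq \emax(S_W) \leq M\}$, so that $\pr(E_n) \to 1$ by hypothesis. On $E_n$, the matrix $[Y, Z, X]$ has full column rank, hence Lemmas \ref{lem:drift_asy} and \ref{lem:minor_asy} apply and supply explicit expressions for $\lambda_n$, $L_n$, and $\veps_n$ as functions of the data. I would then repeat the algebraic manipulations from the proof of Theorem \ref{thm:asy_stab}: the Frisch--Waugh--Lovell representation of $\hat{\Alpha}$ combined with the Minimax Principle, which on $E_n$ forces the spectra of $Y^\tsp Y / n$ and $[Z, X]^\tsp [Z, X] / n$ into the interval $[M^{-1}, M]$, yields $\Vert \hat{\Alpha} \Vert_F \leq \sqrt{r} M$, and hence $\lambda_n = O(1/n)$ and $L_n = O(1)$ with deterministic implicit constants valid on $E_n$. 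One can then fix a bounded deterministic sequence $T_n > 2 L_n / (1 - \lambda_n)$, and the Schur-complement identity placing the spectrum of $Y^\tsp Q_{[Z, X]} Y$ in $[n / M, n M]$ on $E_n$ gives a strictly positive lower bound for $\veps_n$ depending only on $M$, $r$, and the limit of $T_n / n$.

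Combining these four bounds through the formula for $\bar{\rho}$ in Theorem \ref{thm:Rosenthal} produces a deterministic constant $\bar{\rho}^{\star} < 1$ and an integer $n_0$ such that $\{\bar{\rho}_n \leq \bar{\rho}^{\star}\} \supseteq E_n$ for all $n \geq n_0$; consequently $\pr(\bar{\rho}_n < 1) \geq \pr(E_n) \to 1$, which is the definition of asymptotic geometric ergodicity in probability. The main obstacle is purely bookkeeping: the almost sure arguments in Theorem \ref{thm:asy_stab} rest on finitely many algebraic inequalities, all of which follow deterministically from the single spectral bound $M^{-1} \leq \emin(S_W) \leq \emax(S_W) \leq M$, so replacing that bound by the same bound holding on an event of probability tending to one yields the conclusion with no new analytic ingredients.
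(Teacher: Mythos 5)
Your proposal is correct and matches the paper's intent exactly: the paper omits this proof precisely because it is "essentially the same as the proof of Theorem \ref{thm:asy_stab}, arguing that the necessary conditions hold with probability tending to one," which is what you carry out by working on the event $E_n$ and extracting a deterministic bound $\bar{\rho}^{\star} < 1$ valid on $E_n$ for large $n$. The only cosmetic point is that to fit the definition of asymptotic geometric ergodicity in probability one should set $\bar{\rho}_n = 1$ off $E_n$ (so that $\rho^\star_n \leq \bar{\rho}_n$ holds for every $n$), but this does not affect the argument.
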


\subsection{Example}

We illustrate the behavior of $\lambda = \lambda_n$, $L = L_n$, and $\veps =
\veps_n$ from Lemmas \ref{lem:drift_asy} and \ref{lem:minor_asy} in a stable
VARX of order $q = 1$; that is $\Alpha = \Alpha_1$ and $\Vert \Alpha\Vert < 1$
in \eqref{eq:varx}. Stability makes some the arguments in this example easy to
motivate formally, but we emphasize that it is not needed for the preceding
results. We take $X_t = 1$ for all $t$, $\Beta = 1_r^\tsp$, $\Sigma = \sigma^2
I_r$, and pick hyperparameters $C = I_{r^2}$, $m = 0$, $D = 0$, and $a = 0$. We
first examine $\lambda_n$, $L_n$, and $\veps_n$ analytically and then illustrate
those calculations using simulations.

In the present setting, the expressions for $\lambda_n$ and $L_n$ in Lemma
\ref{lem:drift_asy} simplify to
\[
 \lambda_n = \frac{r + \Vert \hat{\Alpha}\Vert_F^2}{n - 2r - 3} \quad
 \text{and}\quad L_n  = \frac{r + \Vert \hat{\Alpha}\Vert_F^2}{n - 2r - 3}
 \tr(Y^\tsp Q_{[Z, X]} Y).
\]
Because the VARX is stable, $\hat{\Alpha}$ and $Y^\tsp Q_{[Z, X]}Y / n$ are
consistent for $\Alpha$ and $\Sigma = \sigma^2 I_r$, respectively, as $n\to
\infty$ \citep{Lutkepohl2005}, which implies
\[
 \lambda_n = \frac{r + \Vert \Alpha\Vert_F^2}{n - 2r - 3} + o_\pr(n^{-1})
\]
and
\[
 L_n  = \frac{r + \Vert \Alpha\Vert_F^2}{n - 2r - 3} \tr(n\Sigma) + o_\pr(1) =
 r\sigma^2 (r + \Vert \Alpha\Vert_F^2) + o_\pr(1).
\]
Clearly, $\lambda_n = O_\pr(1/n)$ and $L_n = O_\pr(1)$. To get some intuition
for how $r$ affects $\lambda_n$ and $L_n$, let us ignore the stochastic terms
that are of lower order when $n$ grows with $r$ fixed; that is, consider the
approximations
\[
 \lambda_n \approx \tilde{\lambda}_n = \frac{r + \Vert \Alpha\Vert_F^2}{n - 2r -
 3} ~~ \text{and} ~~ L_n\approx \tilde{L}_n = r\sigma^2 (r + \Vert
 \Alpha\Vert_F^2).
\]
Because $\Vert \Alpha\Vert_F^2 \leq r \Vert \Alpha\Vert^2 \leq r$, it holds that
$\tilde{\lambda}_n < 1$ for large enough $n$ even if $r$ grows, as long as $r =
o(n)$. By contrast, $\tilde{L}_n$ is of  order $r^2$ if $r$ grows, regardless of
$n$. This suggests, at least informally, that $\lambda_n$ can stay below one if
$r$ grows with $n$ but that $L_n$ may not be bounded in such settings.

To investigate how $\veps_n$ behaves, note that we can take $T_n = \tilde{T}_n +
o_\pr(1)$ with $\tilde{T}_n = 2\tilde{L}_n =  2 r \sigma^2 (r + \Vert
\Alpha\Vert_F^2)$ and have $T_n > 2 L_n / (1 - \lambda_n)$ with probability
tending to one as $n$ grows with $r$ fixed. Thus, using that $Y^\tsp Q_{[Z,
X]}Y/n \to \Sigma = \sigma^2 I_r$ in probability and that the determinant is a
continuous mapping,
\begin{align*}
 \veps_n & = \left(\frac{\vert Y^\tsp Q_{[Z, X]}Y / n\vert}{\vert Y^\tsp Q_{[Z,
 X]}Y / n + I_r(\tilde{T}_n / n + o_\pr(n^{-1}))\vert}\right)^{(n - r - 2) / 2}\\
 & = \left(\frac{\sigma^2}{\sigma^2 + \tilde{T}_n / n}\right)^{r(n - r - 2) / 2}
     + o_\pr(1)\\
 & = \exp[-r^2(r + \Vert \Alpha\Vert_F^2)] + o_\pr(1).
\end{align*}
Consider the approximation $\veps_n \approx \tilde{\veps}_n = \exp[-r^2(r +
\Vert \Alpha\Vert_F^2)]$ and note, as argued previously, $\Vert \Alpha\Vert_F^2
\leq r$. Thus, if $r$ grows, then $\tilde{\veps}_n$ is of the order
$\exp(-r^3)$, regardless of $n$, and therefore we do not expect $\veps_n$ to
behave well if $r$ grows with $n$.

To investigate the finite sample behavior, we generate one sample path from the
VARX and compute $\lambda_n$, $L_n$, and $\veps_n$ using the $n$ first
observations in that sample path, for different values of $n$. We compare these
quantities to the corresponding ones from Lemmas \ref{lem:drift} and
\ref{lem:minor}, that is, those that are obtained in the small-$n$ setting. For
simplicity we also refer to the latter quantities as obtained using a
non-centered drift function. Code producing the results is available at
\url{https://github.com/koekvall/gibbs-bvarx}.

When generating data, we fix $r = 10$ and $\sigma^2 = 1$, so $\Sigma = I_{10}$.
We construct $\Alpha$ by letting $U\in \R{r \times r}$ have entries drawn
independently from the uniform distribution on $[-1/2, 1/2]$ and taking
\begin{equation*}
 \Alpha = (U + U^\tsp + I_r) / (\Vert U + U^\tsp + I_r \Vert + 0.1),
\end{equation*}
which ensures $\Vert \Alpha \Vert < 1$.

The first plot in Figure \ref{fig:sims_1} shows the $\lambda_n$ calculated using
the centered drift function from the large-$n$ setting, is less than $1$ when
$n$ is greater than 40 in our sample, but greater than one for smaller samples;
this illustrates the fact that our large-$n$ results do not control the
convergence rate in small (fixed) samples.

The second plot in Figure \ref{fig:sims_1} shows the observed values of $L_n$,
calculated using the centered drift function, appear to tend to their
probability limit
\[
 r\sigma^2(r + \Vert \Alpha\Vert_F^2) \approx 10 (10 + 2.6) = 126,
\]
where $\Vert \Alpha\Vert_F^2 \approx 2.6$ for the $\Alpha$ used to generate our
sample path. Figure \ref{fig:sims_1} also shows the behavior of $\lambda_n$ and
$L_n$ calculated using the centered drift function is, as $n$ changes, similar
to that observed if $\hat{\Alpha}$ and $Y^\tsp Q_{[Z, X]}Y / n$ are replaced by
$\Alpha$ and $\Sigma$. We have not plotted the $\lambda_n$ calculated using the
non-centered drift-function since it can be identically zero. However, the $L_n$
obtained from that drift function is plotted and is smaller (better) than that
for the centered drift function for every $n$ considered (second plot, Figure
\ref{fig:sims_1}).

The first plot in Figure \ref{fig:sims_2} indicates $\veps_n$ calculated using
the centered drift function decreases towards $\tilde{\veps}_n \approx
\exp[-100(10 + 2.6)] = \exp(-1260)$ as $n$ increases, and that there is decent
agreement with the approximation obtained by replacing $\hat{\Alpha}$ and
$Y^\tsp Q_{[Z, X]}Y / n$ by $\Alpha$ and $\Sigma$. In comparison to $\lambda_n$
and $L_n$, it seems a much larger $n$ is needed for $\veps_n$ to get close to
its probability limit. Note also that, unlike for $\lambda_n$ and $L_n$, larger
values of $\veps_n$ correspond to better convergence rate bounds.

The second plot in Figure \ref{fig:sims_2} shows the $\veps_n$ from Lemma
\ref{lem:minor}, calculated using the non-centered drift function. That
$\veps_n$ appears to tend to zero at an exponential rate (linear for its
logarithm). This rate is straightforward to verify analytically by noting that,
in the expression for $\veps_n$ in Lemma \ref{lem:minor}, the exponent is of
order $n$ and what is inside the exponent tends in probability to a number
between zero and one in the current setting. Hence, unlike the $\veps_n$ from
Lemma \ref{lem:minor_asy}, it does not provide any control over the convergence
rate asymptotically.

In summary, this example shows the drift parameters $\lambda_n$ and $L_n$ are
typically smaller (better) for the non-centered drift function used in the
small-$n$ setting, but the minorization parameter $\veps_n$ obtained using that
drift function tends to zero as $n$ tends to infinity.

\begin{figure}[!htb]
 \caption{Drift parameters $\lambda$ and $L$ with data from a stable vector
 autoregression}\label{fig:sims_1}
 \centering
 \includegraphics[width = \textwidth]{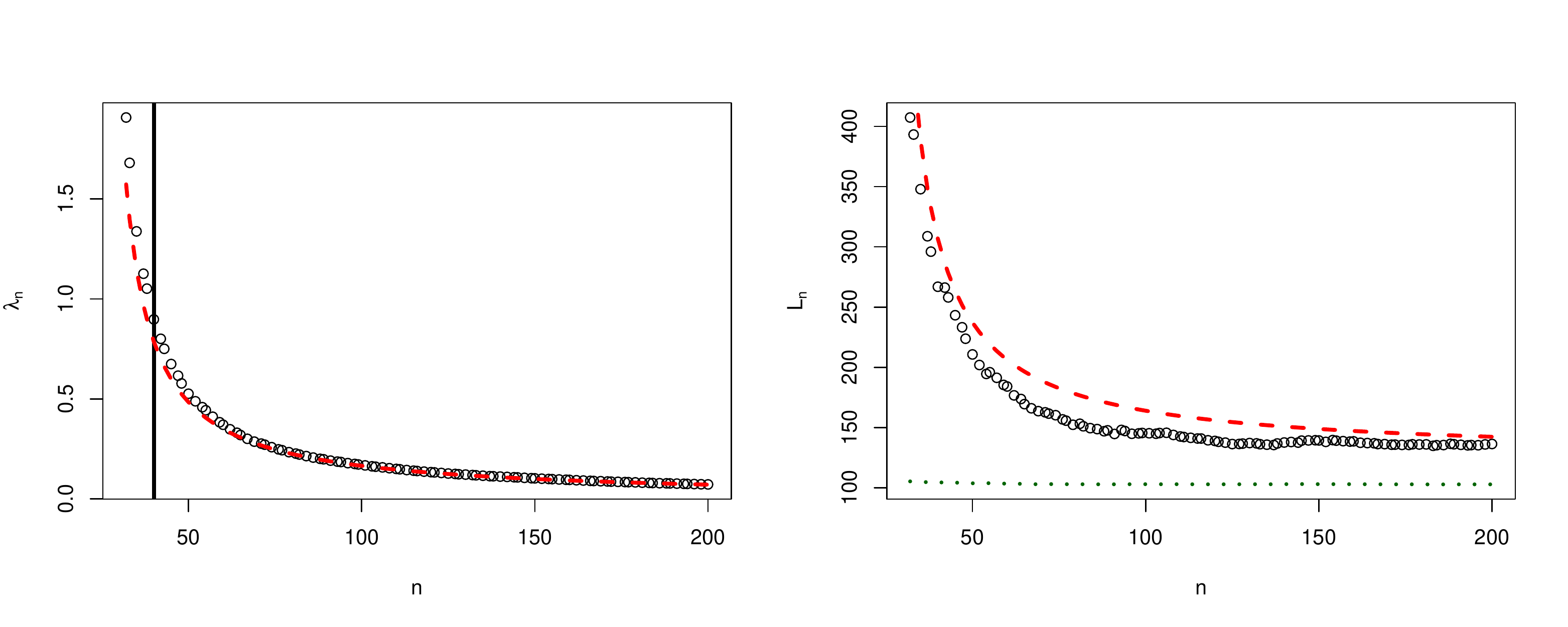}
 \floatfoot{NOTE: Circles correspond to observed $\lambda_n$ and $L_n$
 calculated as in Lemma \ref{lem:drift_asy}. Dashed lines are those $\lambda_n$
 and $L_n$ but with $\hat{\Alpha}$ and $Y^\tsp Q_{[Z, X]}Y / n$ replaced by
 $\Alpha$ and $\Sigma$. The vertical line indicates $n = 40$, the smallest $n$
 for which $\lambda_n < 1$. The dotted line is $L_n$ calculated as in Lemma
 \ref{lem:drift}; the $\lambda_n$ from that lemma is not shown because it can be
 taken to be $0$ for all $n$.}.
\end{figure}

\begin{figure}[!htb]
 \caption{Minorization parameter $\veps$ with data from a stable vector
 autoregression} \label{fig:sims_2}
 \centering
 \includegraphics[width = \textwidth]{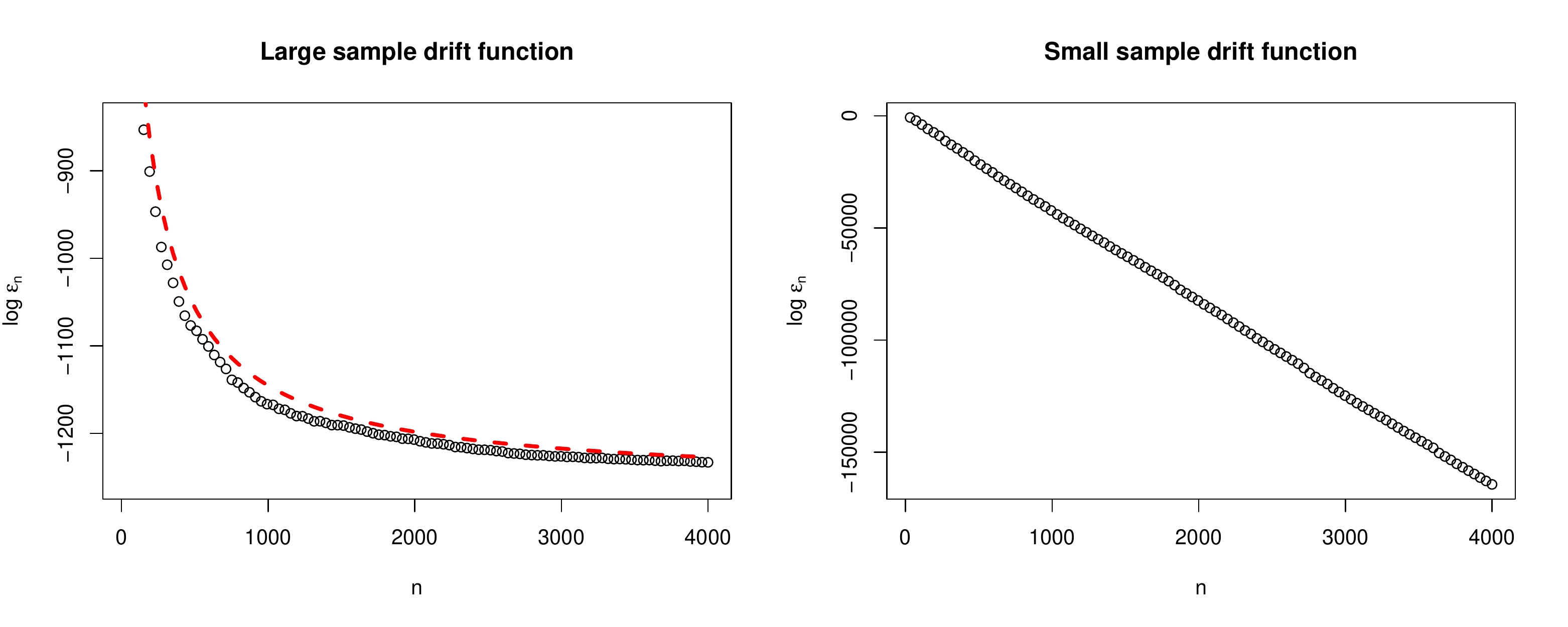}
 \floatfoot{NOTE: Circles correspond to observed $\log \veps_n$, in the left
 plot calculated as in Lemma \ref{lem:minor_asy}, with $T_n = 2 L_n (1 -
 \lambda_n) + 10^{-6}$, and in the right plot as in Lemma \ref{lem:minor} with
 $T_n = L_n + 10^{-6}$. The dashed line is $\log \veps_n$ with $\hat{\Alpha}$
 replaced by $\Alpha$ and $Y^\tsp Q_{[Z, X]}Y / n$ replaced by $\Sigma$.}.
\end{figure}

\section{Discussion} \label{sec:var_disc}

Markov chain Monte Carlo is used in a wide range of problems, including but not
limited to the Bayesian settings considered here. However, the theoretical
properties of algorithms used by practitioners are not always well understood.
We have focused on the case of Bayesian vector autoregressions with predictors.
This is one of the most common models in time series, and in particular in the
analysis and forecasting of macroeconomic time series. Moreover, due to
similarities of the likelihoods of vector autoregressions and multivariate
linear models, our results apply also to the latter. The Gibbs sampler has been
suggested for exploring the posterior distribution of the parameters $\Alpha$
and $\Sigma$ when there are no predictors \citep{Karlsson2013}, but there has
been a lack of theoretical support. We have addressed this by proposing a
collapsed Gibbs sampler that handles predictors and studying its convergence
properties. Since our algorithm simplifies to the usual Gibbs sampler when there
are no predictors, our results apply also in that setting.

We have proven that our algorithm generates a geometrically ergodic Markov chain
under reasonable assumptions (Theorem \ref{thm:geo_erg}). This result is
applicable both in classical settings where the sample size is large (but fixed)
in comparison to the number of parameters, and in large VARXs where the
dimension of the process or the lag length is large in comparison to the number
of observations. Though we have not emphasized it, this geometric
ergodicity holds also if the model is misspecified. Indeed, once the data are
fixed and the posterior specified, it is of no importance for the convergence
rate how the data were actually generated, as long as they satisfy the
conditions laid out in the relevant lemmas and theorem. Thus, with the
algorithm we consider, characteristics of the posterior distribution can be
reasonably estimated using principled approaches to ensuring the simulation
results are trustworthy \citep{Flegal.etal2008,Jones.Hobert2001,Vats.etal2019}.
Our asymptotic analysis, or convergence complexity analysis, indicates our
algorithm should perform well also in large samples; we have proven that, as the
sample size tends to infinity, the geometric ergodicity of the sequence of
transition kernels corresponding to our algorithm is asymptotically stable. This
result is one of the first of its kind for practically relevant MCMC algorithms.
As with our results for small samples, our asymptotic results hold under
almost arbitrary model mispecification, as specified in Theorem
\ref{thm:asy_stab}.

Avenues for future research include convergence complexity analysis of cases
where the dimension of the process or the lag length tends to infinity, either
together with the sample size or for a fixed sample size. Our proof uses the
intuition that the posterior concentrates near the least squares estimator, and
it may be possible to formalize this intuition also in settings where $r$, $p$,
or $q$ grows with $n$. However, it is likely that the drift function would need
to be adjusted, perhaps by using a different norm: the Frobenius norm has
convenient properties that we used in our proofs but is often less useful in
high-dimensional settings \citep{Tropp2015}. If the sample size is fixed or
grows slowly in comparison to other quantities, one would likely have to use a
different drift function altogether or move to an approach that avoids the use
of the minorization condition \citep{Qin.Hobert2019a, Qin.Hobert2019b}.

\subsection*{\centering Acknowledgements}
The authors thank an Associate Editor, two reviewers, and Dootika Vats for
suggestions and insightful comments that improved the paper. Ekvall gratefully
acknowledges support by the FWF (Austrian Science Fund,
\url{https://www.fwf.ac.at/en/}) [P30690-N35].

\bibliographystyle{abbrv}
\bibliography{var_gibbs.bib}

\begin{thebibliography}{10}

\bibitem{Abrahamsen.Hobert2017}
T.~Abrahamsen and J.~P. Hobert.
\newblock Convergence analysis of block {{Gibbs}} samplers for {{Bayesian}}
  linear mixed models with {$p > N$}.
\newblock {\em Bernoulli}, 23:459--478, 2017.

\bibitem{Backlund.Hobert2020}
G.~Backlund and J.~P. Hobert.
\newblock A note on the convergence rate of {MCMC} for robust {B}ayesian
  multivariate linear regression with proper priors.
\newblock {\em Computational and Mathematical Methods}, 2, 2020.

\bibitem{Banbura.etal2009}
M.~Ba{\'n}bura, D.~Giannone, and L.~Reichlin.
\newblock Large {{Bayesian}} vector autoregressions.
\newblock {\em Journal of Applied Econometrics}, 25:71--92, 2009.

\bibitem{Bhatia2012}
R.~Bhatia.
\newblock {\em Matrix {{Analysis}}}.
\newblock {Springer New York}, 2012.

\bibitem{Chan.Geyer1994}
K.~S. Chan and C.~J. Geyer.
\newblock Comment on ``{{Markov}} chains for exploring posterior
  distributions''.
\newblock {\em The Annals of Statistics}, 22:1747--1758, 1994.

\bibitem{Doss.etal2014}
C.~R. Doss, J.~M. Flegal, G.~L. Jones, and R.~C. Neath.
\newblock Markov chain {{Monte Carlo}} estimation of quantiles.
\newblock {\em Electronic Journal of Statistics}, 8:2448--2478, 2014.

\bibitem{Flegal.etal2008}
J.~M. Flegal, M.~Haran, and G.~L. Jones.
\newblock Markov chain {{Monte Carlo}}: {{Can}} we trust the third significant
  figure?
\newblock {\em Statistical Science}, 23:250--260, 2008.

\bibitem{Flegal.Jones2010}
J.~M. Flegal and G.~L. Jones.
\newblock Batch means and spectral variance estimators in {{Markov}} chain
  {{Monte Carlo}}.
\newblock {\em The Annals of Statistics}, 38:1034--1070, 2010.

\bibitem{Flegal.Jones2011}
J.~M. Flegal and G.~L. Jones.
\newblock Implementing {{MCMC}}: {{Estimating}} with confidence.
\newblock In S.~Brooks, A.~Gelman, X.-L. Meng, and G.~L. Jones, editors, {\em
  Handbook of Markov Chain Monte Carlo}. {Chapman \& Hall}, {Boca Raton}, 2011.

\bibitem{Geyer1992}
C.~J. Geyer.
\newblock Practical {{Markov}} chain {{Monte Carlo}} (with discussion).
\newblock {\em Statistical Science}, 7:473--511, 1992.

\bibitem{Ghosh.etal2018}
S.~Ghosh, K.~Khare, and G.~Michailidis.
\newblock High-dimensional posterior consistency in {{Bayesian}} vector
  autoregressive models.
\newblock {\em Journal of the American Statistical Association}, pages 1--14,
  2018.

\bibitem{Harville1997}
D.~A. Harville.
\newblock {\em Matrix {{Algebra From}} a {{Statistician}}'s {{Perspective}}}.
\newblock {Springer New York}, 1997.

\bibitem{Hobert.etal2018}
J.~P. Hobert, Y.~J. Jung, K.~Khare, and Q.~Qin.
\newblock Convergence analysis of {{MCMC}} algorithms for {{Bayesian}}
  multivariate linear regression with non-{{Gaussian}} errors.
\newblock {\em Scandinavian Journal of Statistics}, 2018.

\bibitem{Horn.Johnson1991}
R.~A. Horn and C.~R. Johnson.
\newblock {\em Topics in {{Matrix Analysis}}}.
\newblock {Cambridge University Press}, {New York}, 1991.

\bibitem{Jarner.Hansen2000}
S.~F. Jarner and E.~Hansen.
\newblock Geometric ergodicity of {{Metropolis}} algorithms.
\newblock {\em Stochastic Processes and Their Applications}, 85:341--361, 2000.

\bibitem{Johndrow.etal2018}
J.~E. Johndrow, A.~Smith, N.~Pillai, and D.~B. Dunson.
\newblock {{MCMC}} for imbalanced categorical data.
\newblock {\em Journal of the American Statistical Association}, pages 1--10,
  2018.

\bibitem{Johnson.Jones2015}
A.~A. Johnson and G.~L. Jones.
\newblock Geometric ergodicity of random scan {{Gibbs}} samplers for
  hierarchical one-way random effects models.
\newblock {\em Journal of Multivariate Analysis}, 140:325--342, 2015.

\bibitem{Jones2004}
G.~L. Jones.
\newblock On the {{Markov}} chain central limit theorem.
\newblock {\em Probability Surveys}, 1:299--320, 2004.

\bibitem{Jones.etal2006}
G.~L. Jones, M.~Haran, B.~S. Caffo, and R.~Neath.
\newblock Fixed-width output analysis for {{Markov}} chain {{Monte Carlo}}.
\newblock {\em Journal of the American Statistical Association},
  101:1537--1547, 2006.

\bibitem{Jones.Hobert2001}
G.~L. Jones and J.~P. Hobert.
\newblock Honest exploration of intractable probability distributions via
  {{Markov}} chain {{Monte Carlo}}.
\newblock {\em Statistical Science}, 16:312--334, 2001.

\bibitem{Jones.Hobert2004}
G.~L. Jones and J.~P. Hobert.
\newblock Sufficient burn-in for {{Gibbs}} samplers for a hierarchical random
  effects model.
\newblock {\em The Annals of Statistics}, 32:784--817, 2004.

\bibitem{Kadiyala.Karlsson1997}
K.~R. Kadiyala and S.~Karlsson.
\newblock Numerical methods for estimation and inference in {{Bayesian
  VAR}}-models.
\newblock {\em Journal of Applied Econometrics}, 12:99--132, 1997.

\bibitem{Karlsson2013}
S.~Karlsson.
\newblock Forecasting with {{Bayesian}} vector autoregression.
\newblock In {\em Handbook of economic forecasting}, pages 791--897.
  {Elsevier}, 2013.

\bibitem{Koop2013}
G.~M. Koop.
\newblock Forecasting with medium and large {{Bayesian VARs}}.
\newblock {\em Journal of Applied Econometrics}, 28:177--203, 2013.

\bibitem{Korobilis2008}
D.~Korobilis.
\newblock Forecasting in vector autoregressions with many predictors.
\newblock In {\em Bayesian econometrics}, pages 403--431. {Emerald Group
  Publishing Limited}, 2008.

\bibitem{Latuszynski.etal2013}
K.~{\L}atuszy{\'n}ski, B.~Miasojedow, and W.~Niemiro.
\newblock Nonasymptotic bounds on the estimation error of {{MCMC}} algorithms.
\newblock {\em Bernoulli}, 19:2033--2066, 2013.

\bibitem{Liu1994}
J.~S. Liu.
\newblock The collapsed {{Gibbs}} sampler in {{Bayesian}} computations with
  applications to a gene regulation problem.
\newblock {\em Journal of the American Statistical Association}, 89:958--966,
  1994.

\bibitem{Lutkepohl2005}
H.~L{\"u}tkepohl.
\newblock {\em New {{Introduction}} to {{Multiple Time Series Analysis}}}.
\newblock {Springer Berlin Heidelberg}, 2005.

\bibitem{MacKinnon.Davidson2003}
J.~G. MacKinnon and R.~Davidson.
\newblock {\em Econometric {{Theory}} and {{Methods}}}.
\newblock {Oxford University Press}, 2003-10-11, 2003.

\bibitem{Magnus.Neudecker2002}
J.~R. Magnus and H.~Neudecker.
\newblock {\em Matrix {{Differential Calculus With Applications}} in
  {{Statistics}} and {{Econometrics}}}.
\newblock {Wiley John \& Sons}, 2002-07-11, 2002.

\bibitem{Meyn.Tweedie2011}
S.~Meyn and R.~L. Tweedie.
\newblock {\em Markov {{Chains}} and {{Stochastic Stability}}}.
\newblock {Cambridge University Press}, 2011.

\bibitem{Qin.Hobert2019}
Q.~Qin and J.~P. Hobert.
\newblock Convergence complexity analysis of {{Albert}} and {{Chib}}'s
  algorithm for {{Bayesian}} probit regression.
\newblock {\em The Annals of Statistics}, 47:2320--2347, 2019.

\bibitem{Qin.Hobert2019a}
Q.~Qin and J.~P. Hobert.
\newblock Geometric convergence bounds for {{Markov}} chains in {{Wasserstein}}
  distance based on generalized drift and contraction conditions.
\newblock {\em arXiv e-prints}, page arXiv:1902.02964, 2019.

\bibitem{Qin.Hobert2019b}
Q.~Qin and J.~P. Hobert.
\newblock Wasserstein-based methods for convergence complexity analysis of
  {{MCMC}} with applications.
\newblock {\em arXiv e-prints}, 2019.

\bibitem{Rajaratnam.Sparks2015}
B.~Rajaratnam and D.~Sparks.
\newblock {{MCMC}}-{{Based}} inference in the era of big data: {{A}}
  fundamental analysis of the convergence complexity of high-dimensional
  chains.
\newblock {\em arXiv e-prints}, page arXiv:1508.00947, 2015.

\bibitem{Robert1995}
C.~P. Robert.
\newblock Convergence control methods for {{Markov}} chain {{Monte Carlo}}
  algorithms.
\newblock {\em Statistical Science}, 10(3):231--253, 1995.

\bibitem{Roberts.Rosenthal2001}
G.~O. Roberts and J.~S. Rosenthal.
\newblock Markov chains and de-initializing processes.
\newblock {\em Scandinavian Journal of Statistics}, 28:489--504, 2001.

\bibitem{Roberts.Tweedie1996}
G.~O. Roberts and R.~L. Tweedie.
\newblock Geometric convergence and central limit theorems for multidimensional
  {{Hastings}} and {{Metropolis}} algorithms.
\newblock {\em Biometrika}, 83:95--110, 1996.

\bibitem{Robertson.etal2019}
N.~Robertson, J.~M. Flegal, D.~Vats, and G.~L. Jones.
\newblock Assessing and visualizing simultaneous simulation error.
\newblock {\em Journal of Computational and Graphical Statistics {\rm (to
  appear)}}, 2020.

\bibitem{Rosenthal1995}
J.~S. Rosenthal.
\newblock Minorization conditions and convergence rates for {{Markov}} chain
  {{Monte Carlo}}.
\newblock {\em Journal of the American Statistical Association}, 90:558--566,
  1995.

\bibitem{Rosenthal1996}
J.~S. Rosenthal.
\newblock Analysis of the {{Gibbs}} sampler for a model related to
  {{James}}--{{Stein}} estimators.
\newblock {\em Statistics and Computing}, 6:269--275, 1996.

\bibitem{Schott2005}
J.~R. Schott.
\newblock {\em Matrix {{Analysis}} for {{Statistics}}}.
\newblock {Wiley}, {Hoboken, New Jeresy}, second edition, 2005.

\bibitem{Smith1992}
R.~L. Smith.
\newblock Some interlacing properties of the {{Schur}} complement of a
  {{Hermitian}} matrix.
\newblock {\em Linear Algebra and its Applications}, 177:137--144, 1992.

\bibitem{Tiao.Zellner1964}
G.~C. Tiao and A.~Zellner.
\newblock On the {{Bayesian}} estimation of multivariate regression.
\newblock {\em Journal of the Royal Statistical Society. Series B
  (Methodological)}, 26:277--285, 1964.

\bibitem{Tropp2015}
J.~A. Tropp.
\newblock An introduction to matrix concentration inequalities.
\newblock {\em Foundations and Trends in Machine Learning}, 8(1-2):1--230,
  2015.

\bibitem{Vats2017}
D.~Vats.
\newblock Geometric ergodicity of {{Gibbs}} samplers in {{Bayesian}} penalized
  regression models.
\newblock {\em Electronic Journal of Statistics}, 11:4033--4064, 2017.

\bibitem{Vats.etal2018}
D.~Vats, J.~M. Flegal, and G.~L. Jones.
\newblock Strong consistency of multivariate spectral variance estimators in
  {{Markov}} chain {{Monte Carlo}}.
\newblock {\em Bernoulli}, 24:1860--1909, 2018.

\bibitem{Vats.etal2019}
D.~Vats, J.~M. Flegal, and G.~L. Jones.
\newblock Multivariate output analysis for {{Markov}} chain {{Monte Carlo}}.
\newblock {\em Biometrika}, 106:321--337, 2019.

\bibitem{Yang.Rosenthal2019}
J.~Yang and J.~S. Rosenthal.
\newblock Complexity results for {{MCMC}} derived from quantitative bounds.
\newblock {\em arXiv e-prints}, 2019.

\bibitem{Yang.etal2016}
Y.~Yang, M.~J. Wainwright, and M.~I. Jordan.
\newblock On the computational complexity of high-dimensional {{Bayesian}}
  variable selection.
\newblock {\em The Annals of Statistics}, 44:2497--2532, 2016.

\end{thebibliography}
\appendix

\section{Preliminaries} \label{app:prel}

\begin{lem}\label{lem:partition_rank}
 If $X_i \in \R{n \times m_i}$, $m_i \in \{1, 2, \dots\}$, $i = 1, 2, 3$, and $X
 = [X_1, X_2, X_3]$ $\in \R{n \times (m_1 + m_2 + m_3)}$ has full column rank,
 then with $\tilde{X}_i = Q_{X_2}X_i$, $i = 1, 2, 3$,
 \begin{enumerate}
  \item $X_1^\tsp Q_{X_2} X_1$ is invertible,
  \item $X_1^\tsp Q_{[X_2, X_3]}X_1$ is invertible, and
  \item $X_1 ^\tsp Q_{[X_2, X_3]} X_1 = \tilde{X}_1^\tsp Q_{\tilde{X}_3}
        \tilde{X}_1$.
 \end{enumerate}
\end{lem}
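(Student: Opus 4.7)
The plan is to handle the three parts in order, reducing everything to elementary facts about symmetric idempotents and Gram matrices. Parts (1) and (2) are essentially the same argument. Since $Q_{X_2}$ and $Q_{[X_2, X_3]}$ are symmetric and idempotent, the matrices in question are Gram matrices:
\[
 X_1^\tsp Q_{X_2} X_1 = (Q_{X_2} X_1)^\tsp(Q_{X_2} X_1), \qquad X_1^\tsp Q_{[X_2, X_3]} X_1 = (Q_{[X_2, X_3]} X_1)^\tsp(Q_{[X_2, X_3]} X_1),
\]
so each is invertible if and only if the corresponding residual matrix has full column rank. I would argue this by contradiction: if $Q_{X_2} X_1 v = 0$ for some nonzero $v$, then $X_1 v$ lies in the column space of $X_2$, i.e.\ $X_1 v = X_2 u$ for some $u$, giving a nontrivial null combination of the columns of $X = [X_1, X_2, X_3]$ and contradicting full column rank. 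The argument for (2) is identical with $[X_2, X_3]$ in place of $X_2$.

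For (3) the plan is to invoke the Frisch--Waugh--Lovell decomposition at the level of projections. Part (1), applied with $X_3$ in the role of $X_1$, ensures that $\tilde{X}_3 = Q_{X_2} X_3$ has full column rank, so $P_{\tilde{X}_3}$ is well defined. Since the column space of $\tilde{X}_3$ is orthogonal to that of $X_2$ and together they span the column space of $[X_2, X_3]$, one has
\[
 P_{[X_2, X_3]} = P_{X_2} + P_{\tilde{X}_3}, \qquad Q_{[X_2, X_3]} = Q_{X_2} - P_{\tilde{X}_3}.
\]
Substituting yields $X_1^\tsp Q_{[X_2, X_3]} X_1 = X_1^\tsp Q_{X_2} X_1 - X_1^\tsp P_{\tilde{X}_3} X_1$. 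The first summand already equals $\tilde{X}_1^\tsp \tilde{X}_1$ by idempotency of $Q_{X_2}$. For the second, I would use that the column space of $\tilde{X}_3$ lies in the column space of $Q_{X_2}$, which gives the absorption identity $P_{\tilde{X}_3} = Q_{X_2} P_{\tilde{X}_3} Q_{X_2}$, so $X_1^\tsp P_{\tilde{X}_3} X_1 = \tilde{X}_1^\tsp P_{\tilde{X}_3} \tilde{X}_1$. Combining the two terms collapses the right-hand side to $\tilde{X}_1^\tsp (I - P_{\tilde{X}_3}) \tilde{X}_1 = \tilde{X}_1^\tsp Q_{\tilde{X}_3} \tilde{X}_1$.

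There is no real obstacle here; the only step that deserves a sentence of care is verifying the absorption identity $P_{\tilde{X}_3} = Q_{X_2} P_{\tilde{X}_3} Q_{X_2}$, which is immediate from the fact that every column of $\tilde{X}_3$ is annihilated by $P_{X_2}$. Everything else is routine manipulation of projections plus one linear-independence argument inherited from the full column rank of $X$.
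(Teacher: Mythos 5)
Your proof is correct, and parts (1) and (2) coincide with the paper's argument (a null vector of $X_1^\tsp Q_{X_2}X_1$ would put $X_1 v$ in the column space of $X_2$, contradicting the full column rank of $X$). For part (3) the paper simply cites the Frisch--Waugh--Lovell theorem for the identity $Q_{\tilde{X}_3}\tilde{X}_1 = Q_{[X_2,X_3]}X_1$, whereas you derive the same fact from the orthogonal decomposition $P_{[X_2,X_3]} = P_{X_2} + P_{\tilde{X}_3}$ together with the absorption identity $P_{\tilde{X}_3} = Q_{X_2}P_{\tilde{X}_3}Q_{X_2}$; this is a correct, self-contained substitute for the citation.
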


\begin{proof}
 We start with 1. Suppose for contradiction that there exists $v \in
 \R{m_1}\setminus\{0\}$ such that $X_1^\tsp Q_{X_2}X_1 v = 0$, which is
 equivalent to $Q_{X_2}X_1 v = 0$. This can happen either if $X_1 v = 0$, which
 contradicts the full column rank of $X$, or if $w = X_1 v$ is a non-zero vector
 in the column space of $X_1$ that also lies in the column space of $X_2$, which
 again contradicts the full column rank of $X$. The proof for 2 is exactly the
 same as that of 1 but with $[X_2, X_3]$ in place of $X_2$. Point 3 is an
 immediate consequence the Frisch--Waugh--Lovell theorem \citep[Section
 2.4]{MacKinnon.Davidson2003}, which says among other things that
 $Q_{\tilde{X}_3} \tilde{X_1} = Q_{[X_2, X_3]}X_1$.
\end{proof}

\begin{lem}\label{lem:sum_pd_psd}
 For any $A \in \SPD{n}$, $B\in \SPSD{n}$, and invertible $C \in \R{n \times n}$,
 \begin{enumerate}
  \item $\tr(C^\tsp [A + B]^{-1} C) \leq \tr(C^\tsp A^{-1}C)$,
  \item $\Vert C^\tsp (A + B)^{-1} C\Vert \leq \Vert C^\tsp A^{-1} C\Vert$,
  \item $\vert C^\tsp (A + B) C\vert \geq \vert C^\tsp A C\vert$, and
  \item $\vert C^\tsp (A + B)^{-1} C\vert \leq \vert C^\tsp A^{-1} C\vert$.
 \end{enumerate}
\end{lem}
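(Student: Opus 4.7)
The plan is to reduce all four inequalities to two elementary facts about the L\"owner (semi)definite order: (i) if $A \in \SPD{n}$ and $B \in \SPSD{n}$, then $A + B \in \SPD{n}$ and $(A + B)^{-1} \preceq A^{-1}$; (ii) congruence preserves the L\"owner order, i.e.\ $M \preceq N$ implies $C^\tsp M C \preceq C^\tsp N C$ for any conformable $C$. Fact (ii) is immediate from the definition, so the first step is to establish (i). I would write
\[
 A + B = A^{1/2}\bigl(I_n + A^{-1/2} B A^{-1/2}\bigr) A^{1/2},
\]
note that $A^{-1/2} B A^{-1/2}$ is SPSD, so $I_n + A^{-1/2} B A^{-1/2}$ is SPD with every eigenvalue at least one, and then invert and sandwich to obtain
\[
 (A + B)^{-1} = A^{-1/2}\bigl(I_n + A^{-1/2} B A^{-1/2}\bigr)^{-1} A^{-1/2} \preceq A^{-1/2} I_n A^{-1/2} = A^{-1}.
\]

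Having (i) and (ii), claims 1, 2, and 4 follow by one application of (ii) with the given $C$: the matrix $E := C^\tsp A^{-1} C - C^\tsp (A + B)^{-1} C$ is SPSD. For claim 1 I would take the trace of $E$, which is nonnegative. For claim 2 I would use that $C^\tsp A^{-1} C$ and $C^\tsp(A + B)^{-1} C$ are both SPSD (and in fact SPD since $C$ is invertible), so their spectral norms equal their maximum eigenvalues, and $M \preceq N$ on the SPSD cone gives $\lambda_{\max}(M) \le \lambda_{\max}(N)$ by the min–max characterization. For claim 4 I would use the monotonicity of the determinant on SPD matrices: if $0 \prec M \preceq N$, then $\vert M\vert \le \vert N\vert$, which one sees by factoring $N = M^{1/2}(I_n + M^{-1/2}(N-M)M^{-1/2})M^{1/2}$ and observing that the middle factor has all eigenvalues $\geq 1$.

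Claim 3 is the only one that does not pass through (i). Here I would write
\[
 C^\tsp (A + B) C = C^\tsp A C + C^\tsp B C,
\]
where $C^\tsp A C$ is SPD (since $C$ is invertible and $A$ is SPD) and $C^\tsp B C$ is SPSD. Then the same factorization trick,
\[
 C^\tsp (A + B) C = (C^\tsp A C)^{1/2}\bigl(I_n + (C^\tsp A C)^{-1/2}(C^\tsp B C)(C^\tsp A C)^{-1/2}\bigr)(C^\tsp A C)^{1/2},
\]
together with the fact that the bracketed matrix has all eigenvalues at least one, gives $\vert C^\tsp(A+B)C\vert \geq \vert C^\tsp A C\vert$ on taking determinants.

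There is no real obstacle; the main thing to be careful about is tracking which matrices are SPD versus merely SPSD so that inverses and square roots are legal (which is why $C$ is assumed invertible and $A$ SPD), and to use the congruence step (ii) before invoking the scalar monotonicity of trace, maximum eigenvalue, and determinant on the SPSD cone.
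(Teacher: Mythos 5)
Your proof is correct, but it takes a genuinely different route from the paper's. The paper first reduces everything to the case $C = I_n$ (absorbing $C$ into $A$ and $B$ through the same congruence identities you use), and then establishes the eigenvalue-wise domination $\gamma_i(A + B) \geq \gamma_i(A)$ for every $i$ via Weyl's inequalities; the trace, spectral norm, and determinant claims then all follow at once by summing, maximizing, or multiplying the (reciprocals of the) eigenvalues. You instead work with the L\"owner order: you prove the operator anti-monotonicity $(A+B)^{-1} \preceq A^{-1}$ by the square-root factorization $A + B = A^{1/2}(I_n + A^{-1/2}BA^{-1/2})A^{1/2}$, push the order through the congruence by $C$, and then invoke monotonicity of the trace, the largest eigenvalue, and the determinant on the positive semidefinite cone. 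The two arguments are close cousins --- your fact (i) is an immediate consequence of the paper's eigenvalue domination, and the min--max principle recovers eigenvalue domination from your L\"owner inequality --- but yours is fully self-contained (no appeal to Weyl's inequalities or an external reference) at the cost of proving determinant monotonicity by hand, while the paper's eigenvalue bookkeeping dispatches all four conclusions uniformly in one line once Weyl is cited. Both are complete and correct.
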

\begin{proof}
 All claims can be reduced to the case where $C = I_n$ by either writing $C^\tsp
 (A + B)^{-1}C = (C^{-1}AC^{-\tsp} + C^{-1}BC^{-\tsp}])^{-1}$ and replacing $A$
 and $B$ by $C^{-1}AC^{-\tsp}$ and $C^{-1}BC^{-\tsp}$, respectively, or by
 writing $C^\tsp (A + B)C = C^\tsp A C + C^\tsp  B C$ and replacing $A$ and $B$
 by $C^\tsp A C$ and $C^\tsp B C$, respectively. Assume thus that $C = I_n$.
 Since $A + B$ is SPD, the eigenvalues of $(A + B)^{-1}$ are the reciprocals of
 those of $A + B$. But, letting $\gamma_i(A)$ denote the $i$th eigenvalue in,
 say, decreasing order, Weyl's inequalities \citep[][Corollary
 III.2.2]{Bhatia2012} say $\gamma_i(A + B) \geq \gamma_i(A) + \gamma_n(B) =
 \gamma_i(A) + \emin(B) \geq \lambda_i(A)$, and hence $\tr([A  + B]^{-1}) =
 \sum_{i = 1}^n 1 / \gamma_i(A + B) \leq \sum_{i = 1}^n 1 / \gamma_i(A) =
 \tr(A^{-1})$, which proves the first claim. The remaining claims follow
 similarly since the spectral norm is the maximum eigenvalue for SPSD matrices
 and the determinant is the product of eigenvalues.
\end{proof}

\begin{lem} \label{lem:small_coef}
 For any $X \in \R{n \times p}$, $y \in\R{n}$, and $c > 0$,
 \[
  \Vert (I_p c + X^\tsp X)^{-1}X^\tsp y\Vert \leq \Vert (X^\tsp X)^g
    X^\tsp y\Vert,
 \]
\end{lem}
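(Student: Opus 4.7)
The plan is to reduce to the Moore--Penrose pseudo-inverse on the right-hand side and then verify the resulting scalar inequality coordinatewise via the singular value decomposition of $X$.

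First I would argue that it suffices to prove the stronger inequality $\Vert (I_p c + X^\tsp X)^{-1} X^\tsp y\Vert \leq \Vert (X^\tsp X)^{+} X^\tsp y\Vert$, where $(X^\tsp X)^{+}$ is the Moore--Penrose pseudo-inverse. The reason is that $\mathrm{range}(X^\tsp) = \mathrm{range}(X^\tsp X)$, so $X^\tsp y$ lies in the range of $X^\tsp X$ and the equation $(X^\tsp X) z = X^\tsp y$ is consistent. For any generalized inverse $(X^\tsp X)^g$ the defining identity $(X^\tsp X)(X^\tsp X)^g (X^\tsp X) = X^\tsp X$ implies that $(X^\tsp X)^g X^\tsp y$ is a solution of this equation, and the Moore--Penrose pseudo-inverse returns the minimum-norm solution. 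Hence $\Vert (X^\tsp X)^{+} X^\tsp y\Vert \leq \Vert (X^\tsp X)^g X^\tsp y\Vert$ for every generalized inverse, and it is enough to bound the left-hand side by the Moore--Penrose expression.

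Next I would introduce the (thin) SVD $X = U\Sigma V^\tsp$ with nonzero singular values $\sigma_1, \dots, \sigma_r$, so that $X^\tsp X = V \operatorname{diag}(\sigma_1^2, \dots, \sigma_r^2, 0, \dots, 0) V^\tsp$ and $(X^\tsp X)^{+} = V \operatorname{diag}(\sigma_1^{-2}, \dots, \sigma_r^{-2}, 0, \dots, 0) V^\tsp$. Writing $\xi = U^\tsp y$ and computing both sides in the basis $V$, one obtains
\[
 \Vert (I_p c + X^\tsp X)^{-1}X^\tsp y\Vert^2 = \sum_{i=1}^{r} \frac{\sigma_i^2 \xi_i^2}{(c + \sigma_i^2)^2}, \qquad \Vert (X^\tsp X)^{+} X^\tsp y\Vert^2 = \sum_{i=1}^{r} \frac{\xi_i^2}{\sigma_i^2}.
\]
The only step left is the termwise comparison: for each $i$, the inequality $\sigma_i^2/(c+\sigma_i^2)^2 \leq 1/\sigma_i^2$ is equivalent to $\sigma_i^4 \leq (c + \sigma_i^2)^2$, which is immediate from $c > 0$ and $\sigma_i > 0$.

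There is no real obstacle here; the only mildly subtle point is ensuring that the inequality is indeed meant to hold uniformly over all choices of generalized inverse, which is handled by the Moore--Penrose reduction in the first step. Everything else is a routine SVD calculation.
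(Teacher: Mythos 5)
Your proof is correct, but it takes a genuinely different route from the paper's. The paper argues variationally: it considers the ridge objective $g_c(b) = \Vert y - Xb\Vert^2 + c\Vert b\Vert^2$, observes that $b_1 = (X^\tsp X)^g X^\tsp y$ minimizes $g_0$ for any generalized inverse while $b_2 = (I_p c + X^\tsp X)^{-1}X^\tsp y$ is the unique minimizer of $g_c$, and concludes by comparing objective values that $\Vert b_2\Vert \le \Vert b_1\Vert$. You instead first pass to the Moore--Penrose inverse using the minimum-norm property of $(X^\tsp X)^{+}$ on the consistent system $(X^\tsp X)z = X^\tsp y$, and then verify the remaining inequality explicitly in the singular value basis, reducing it to the scalar fact $\sigma_i^4 \le (c + \sigma_i^2)^2$. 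Both arguments rest on the same key observation that $(X^\tsp X)^g X^\tsp y$ solves the normal equations regardless of which generalized inverse is chosen; the paper cites Harville for this, while you derive it from $\mathrm{range}(X^\tsp) = \mathrm{range}(X^\tsp X)$. The paper's approach is coordinate-free and slightly shorter, and it shows directly that the ridge solution is norm-dominated by \emph{every} least-squares solution without ever singling out the Moore--Penrose inverse. Your computation is more explicit: it produces the exact values of both norms, so the slack in the inequality is visible, and the termwise comparison makes clear exactly where positivity of $c$ enters. Either proof is complete and would serve in the paper.
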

where superscript $g$ denotes an arbitrary generalized inverse.
\begin{proof}
 Consider the optimization problem of minimizing $g_c:\R{p} \to [0, \infty)$
defined by
 \[
  g_c(b):=\Vert y - Xb\Vert^2 + c \Vert b\Vert^2.
 \]
 If $c = 0$, then any $b$ such that $X^\tsp X b = X^\tsp y$ is a solution. Thus,
 for any generalized inverse, $b_1 = (X^\tsp X)^g X^\tsp y$ solves the problem
 \citep[Theorem 9.1.2]{Harville1997}. On the other hand, if $c > 0$ then since
 $Ic + X^\tsp X$ has full rank, the unique solution is $b_2 = (cI + X^\tsp
 X)^{-1}X^\tsp y$. Now a contradiction arises if for some $c > 0$, $\Vert b_1
 \Vert < \Vert b_2\Vert$, which finishes the proof.
\end{proof}

\begin{lem}\label{lem:g_inv_prod}
  For $A \in \R{n \times n}$ and $B \in \SPD{n}$, we have that $B^{-1}A^gB^{-1}$
  is a generalized inverse of $BAB$, where superscript $g$ indicates a generalized
  inverse.
\end{lem}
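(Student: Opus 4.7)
The plan is to verify the defining property of a generalized inverse, namely that $M^g$ is a generalized inverse of $M$ if and only if $M M^g M = M$. Thus it suffices to show that
\[
 (BAB)\bigl(B^{-1}A^g B^{-1}\bigr)(BAB) = BAB.
\]

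First I would use associativity to regroup the product as $B A \bigl(B B^{-1}\bigr) A^g \bigl(B^{-1} B\bigr) A B$. Since $B \in \SPD{n}$ is in particular invertible, $B B^{-1} = B^{-1} B = I_n$, so this collapses to $B \bigl(A A^g A\bigr) B$. By the definition of $A^g$ as a generalized inverse of $A$, we have $A A^g A = A$, so the expression reduces to $B A B$, which is what was needed.

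There is no real obstacle here: the argument is a direct three-line computation relying only on the invertibility of $B$ and the defining identity of $A^g$. Note in particular that symmetry or positive-definiteness of $B$ beyond invertibility is not used, so the statement actually holds for any invertible $B$; the hypothesis $B \in \SPD{n}$ is simply what is needed elsewhere in the paper.
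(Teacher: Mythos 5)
Your proof is correct and is essentially identical to the paper's: both verify the defining identity $(BAB)(B^{-1}A^gB^{-1})(BAB) = BAB$ by cancelling $BB^{-1}$ and invoking $AA^gA = A$. Your remark that only invertibility of $B$ is needed is accurate but does not change the argument.
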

\begin{proof}
 We check the definition, namely that $BABB^{-1}A^gB^{-1}BAB = BAB$. Indeed,
 using that $AA^gA = A$, $ BAB B^{-1}A^gB^{-1} BAB = BAA^gAB = BAB$.
\end{proof}

\section{Main results} \label{app:main}

\begin{proof}[Proof Lemma \ref{lem:joint_dens}]
 Let us suppress conditioning on the parameters for simplicity. We have
 \begin{align*}
  f(Y, X) & = f(X)f(Y\mid X)\\
          & = f(X)f(y_1, \dots, y_n \mid X)\\
          & = f(X)f(y_n\mid y_1, \dots, y_{n - 1}, X)f(y_1, \dots,
              y_{n - 1}\mid X)\\
          & =\ \vdots\\
          & = f(X)\prod_{t = 1}^nf(y_t\mid y_1, \dots, y_{t - 1}, X).
 \end{align*}
 Consider an arbitrary term in the product. We have $Y_t = \Alpha^\tsp Z_t +
 \Beta^\tsp X_t + U_t$. Since $Z_t$ is a function of $Y_1, \dots, Y_{t - 1}$,
 both $Z_t$ and $X_t$ are fixed when conditioning on $X$ and $Y_1, \dots, Y_{t -
 1}$. Thus, the distribution of $Y_t \mid X, Y_1, \dots, Y_{t - 1}$ is
 determined by that of $U_t \mid X, Y_1, \dots, Y_{t - 1}$. But $Y_1, \dots,
 Y_{t - 1}$ are functions of $X_1, \dots, X_{t - 1}$ and $U_1, \dots, U_{t -
 1}$, and $\{U_t\}$ is an i.i.d. sequence independent of $\{X_t\}$, and hence
 $X$. Thus, $U_t \mid X, Y_1, \dots, Y_{t - 1} \sim \rN(0, \Sigma)$, and,
 consequently, $Y_t \mid X, Y_1, \dots, Y_{t - 1} \sim \rN(\Alpha^\tsp Z_t +
 \Beta^\tsp X_t, \Sigma)$. Now the result follows by straightforward algebra and
 the fact that the distribution of $\{X_t\}$ does not depend on the model
 parameters.
\end{proof}

\begin{proof}[Proof Proposition~\ref{prop:prop_post}]
 Assuming the posterior is proper, the given expression for the density, up to
 scaling, follows from routine calculations. We prove the posterior is indeed
 proper under either of the two sets of conditions. Since
 \[
  f(Y, X \mid \Alpha, \Beta, \Sigma) = f(Y\mid \Alpha, \Beta, \Sigma, X)f(X\mid
  \Alpha, \Beta, \Sigma) = f(Y\mid \Alpha, \Beta, \Sigma, X)f(X),
 \]
 only the conditional density $f(Y\mid \Alpha, \Beta, \Sigma, X)$ matters when
 deriving the posterior. Under either of the two sets of conditions, $X$ has
 full column rank so $X^\tsp X$ is invertible and we may define $H_X = (X^\tsp
 X)^{-1}X^\tsp$, $P_X = X H_X$, and $Q_X = I_n - P_X$. Let also $E = Y -
 Z\Alpha$ and use $Q_X + P_X = I_n$ to write
 \begin{align} \label{eq:joint_dens}
   & f(Y \mid \Alpha, \Beta, \Sigma, X) \notag \\
   & \propto \vert \Sigma\vert^{-\frac{n}{2}} \etr\left(-\frac{1}{2}
      [X\Beta - E]^\tsp[X\Beta - E]\Sigma^{-1}\right)\notag\\
   & = \vert \Sigma\vert^{-\frac{n}{2}}\etr\left(-\frac{1}{2}[X\Beta - E]^\tsp
      (Q_X + P_X)[X\Beta - E]\Sigma^{-1}\right)\notag\\
   & = \vert \Sigma\vert^{-\frac{n}{2}}\etr\left(-\frac{1}{2} E^\tsp Q_X
      E\Sigma^{-1} \right) \etr\left(-\frac{1}{2}[\Beta - H_X E]^\tsp X^\tsp
      X [\Beta - H_X E]\Sigma^{-1}\right).
 \end{align}
 The right-most term is a kernel of a matrix normal density for $\Beta$ with
 mean $H_X E$ and scale matrices $(X^\tsp X)^{-1}$ and $\Sigma$. Thus,
 integrating with respect to $\Beta$ gives,
 \begin{align*}
  \int f(Y\mid \Alpha, \Beta, \Sigma, X) \, \dd \Beta & \propto \vert
  \Sigma\vert^{-\frac{n}{2}}\etr\left(-\frac{1}{2} E^\tsp Q_X E\Sigma^{-1}
  \right) (2\pi)^{rp/2}\vert X^\tsp X\vert^{-r}\vert \Sigma\vert^{p}\\
  & \propto \vert \Sigma\vert^{-\frac{n - p}{2}}\etr\left(-\frac{1}{2} E^\tsp
  Q_X E\Sigma^{-1} \right).
 \end{align*}
 Thus, to show that $f(Y\mid \Alpha, \Beta, \Sigma, X)f(\alpha)f(\Sigma)$ can be
 normalized to a proper posterior, we need only show that
 \begin{equation} \label{eq:finite_post}
  \iint \vert \Sigma\vert^{-\frac{n - p}{2}}\etr\left(-\frac{1}{2}
  E^\tsp Q_X E\Sigma^{-1} \right)f(\alpha)f(\Sigma)\,\dd\alpha\,\dd\Sigma
  < \infty.
 \end{equation}
 Let us consider the two sets of conditions separately, starting with the first.
 Since \[\tr(E^\tsp Q_X E \Sigma^{-1}) = \tr(\Sigma^{-1/2}E^\tsp Q_X
 E\Sigma^{-1/2}) \geq 0,\] we can upper bound the integrand in
 \eqref{eq:finite_post} by
 \[
  \vert \Sigma\vert^{-\frac{n - p}{2}} f(\alpha)f(\Sigma) =
    \vert \Sigma\vert^{-\frac{n + a - p}{2}}
    \etr\left(-\frac{1}{2}\Sigma^{-1}D\right)f(\alpha),
 \]
 which since we are assuming that $n - p + a - r - 1 > r - 1$, i.e. that $n + a
 > 2r + p$ and that $D$ is SPD, is the product of a proper inverse Wishart and a
 proper density for $\alpha$. This finishes the proof for the first set of
 conditions.

 For the second set of conditions, notice that for \eqref{eq:finite_post} it
 suffices, since $D$ is SPSD, and hence $f(\Sigma)$ and $f(\alpha)$ both
 bounded, to show that
 \[
  \iint \vert \Sigma\vert^{-\frac{n + a - p}{2}}\etr\left(-\frac{1}{2} E^\tsp
  Q_X E\Sigma^{-1} \right)\,\dd\alpha\,\dd\Sigma < \infty.
 \]
 Let $\tilde{Y} = Q_X Y$ and $\tilde{Z} = Q_X Z$ so that $Q_X E = \tilde{Y} -
 \tilde{Z}\Alpha$. Using the same decomposition as before we have for the last
 integrand
 \begin{align*}
   & \vert \Sigma\vert^{-\frac{n + a - p}{2}}\etr\left(-\frac{1}{2}
    E^\tsp Q_X E\Sigma^{-1} \right)\\
   & =  \vert \Sigma\vert^{-\frac{n + a - p - qr}{2}}\etr\left(-\frac{1}{2}
    \tilde{Y}^\tsp Q_{\tilde{Z}}\tilde{Y}\Sigma^{-1} \right)
    \vert \Sigma\vert^{-\frac{qr}{2}} \\
   & \quad \times\etr\left(-\frac{1}{2} [\Alpha - H_{\tilde{Z}}\tilde{Y}]^\tsp
    \tilde{Z}^\tsp \tilde{Z} [\Alpha - H_{\tilde{Z}}\tilde{Y}]\Sigma^{-1}\right).
 \end{align*}
 Under the second set of assumptions, the last line is proportional to the
 product of an inverse Wishart density for $\Sigma$ with scale matrix
 $\tilde{Y}^\tsp Q_{\tilde{Z}}\tilde{Y}$ and $n + a - p - qr - r - 1$ degrees of
 freedom and a matrix normal density for $\Alpha$ with mean
 $H_{\tilde{Z}}\tilde{Y}$ and scale matrices $(\tilde{Z}^\tsp \tilde{Z})^{-1}$
 and $\Sigma$, and hence integrable. The assumption that $[Y, X, Z]$ has full
 column ensures that, by Lemma \ref{lem:partition_rank}, $\tilde{Y}^\tsp
 Q_{\tilde{Z}}\tilde{Y}$ and $\tilde{Z}^\tsp \tilde{Z}$ are positive definite
 matrices.
\end{proof}

\begin{proof}[Proof of Lemma \ref{lem:collapse_dist}]
 The full conditional distribution of $\Beta$ is immediate from dropping terms
 not depending on $\Beta$ in \eqref{eq:joint_dens}. Consider next the integrand
 in \eqref{eq:finite_post}. The first term in the exponential is
 \begin{align*}
  \tr([Y - Z\Alpha]^\tsp Q_X [Y - Z\Alpha]\Sigma^{-1})
  & = \Vert Q_X(Y - Z\Alpha) \Sigma^{-1/2}\Vert_F^2\\
  & = \Vert(\Sigma^{-1/2}\otimes I_n)(\vecop(Q_X Y) -
    \vecop(Q_X Z\Alpha))\Vert^2\\
  & = \Vert(\Sigma^{-1/2}\otimes I_n)(\vecop(Q_X Y) - [I_r \otimes Q_XZ]\alpha)
    \Vert^2.
 \end{align*}
  Thus, the log of the integrand is quadratic as a function of $\alpha$, with
  Hessian $-B = -\Sigma^{-1}\otimes Z^\tsp Q_X Z - C$ and gradient
  $-(\Sigma^{-1} \otimes Z^\tsp Q_X) \vecop(Q_X Y) -Cm$, which implies the
  desired distribution for $\alpha \mid \Sigma, Y$. Finally, the distribution of
  $\Sigma\mid \alpha, Y$ is immediate from dropping terms in the integrand in
  \eqref{eq:finite_post} not depending on $\Sigma$.
\end{proof}

\begin{proof}[Proof Lemma \ref{lem:tv_dist}]
 Assume $\alpha^h, \Beta^h, \Sigma^h$, $h = 1, 2, \dots$ are generated by the
 collapsed Gibbs sampler in Algorithm \ref{alg:collapsed} started at some point
 $\theta^0 \in \Theta$. The equality follows from showing that $\xi^h =
 (\alpha^h, \Sigma^h)$ and $\theta^h$ are co-de-initializing Markov chains
 \citep[Corollary 1]{Roberts.Rosenthal2001}. That they are both Markov chains is
 clear from the construction of the updates in Algorithm \ref{alg:collapsed}.
 That $\theta^h$ is de-initializing for $\xi^h$, i.e. that the distribution of
 $\xi^h \mid \theta^h, \xi^0$ does not depend on $\xi^0$, is immediate from that
 $\xi^h$ is a function (coordinate projection) of $\theta^h$. The other
 direction, that $\xi^h$ is de-initializing for $\theta^h$, is by construction
 of the algorithm: since $\xi^h$ is a coordinate projection of $\theta^h$, the
 distribution of $\theta^h \mid \xi^h, \theta^0$ is determined by that of
 $\Beta^h \mid \xi^h, \theta^0$, and the distribution from which this value is
 drawn (line 4, Algorithm \ref{alg:collapsed}) does not depend on $\theta^0$.
 Similarly, notice that the distribution of $\xi^h \mid \xi^{h - 1}$ is the same
 as $\xi^h \mid \alpha^{h - 1}$ by construction of the algorithm. Thus,
 $\alpha^{h - 1}$ is de-initializing for $\xi^h$ and the inequality follows
 \citep[Theorem 1]{Roberts.Rosenthal2001}.
\end{proof}

\subsection{Inadequacy of Drift Function in Theorem~\ref{thm:geo_erg}}
\label{app:inadequacy}

\begin{prop} \label{prop:L}
 For the $L = L_n(Y, X)$ defined in Lemma \ref{lem:drift} it holds for some $c_1
 > 0$ and $c_2 > 0$, depending on the hyperparameters but not the data, that
 \[\Vert \hat{\alpha}\Vert_F^2 \leq L_n(Y, X) \leq c_1 \Vert
 \hat{\alpha}\Vert_F^2+ c_2,\] and hence $L_n(Y, X) = O_\pr(1)$ if and only if
 $\hat{\alpha} = O_\pr(1)$.
\end{prop}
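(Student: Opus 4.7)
The plan is to bound $L_n(Y,X)$ from above and below by affine functions of $\Vert\hat{\alpha}\Vert^2=\Vert\hat{\alpha}\Vert_F^2$ using only elementary norm inequalities, and then read off the equivalence $L_n=O_\pr(1)\iff\hat{\alpha}=O_\pr(1)$. Recall from Lemma~\ref{lem:drift} that
\[
 L_n(Y,X) = \left(\Vert C^{-1}\Vert\,\Vert Cm\Vert + \Vert C^{-1/2}\Vert\,\Vert C^{1/2}\hat{\alpha}\Vert\right)^2 + \tr(C^{-1}),
\]
and note that everything in this expression besides $\hat{\alpha}$ is a fixed hyperparameter quantity.

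For the lower bound, I would first observe that $\Vert C^{-1/2}\Vert=\lambda_{\min}(C)^{-1/2}=\Vert C^{-1}\Vert^{1/2}$ and that $\Vert C^{1/2}\hat{\alpha}\Vert\ge \lambda_{\min}(C)^{1/2}\Vert\hat{\alpha}\Vert=\Vert C^{-1}\Vert^{-1/2}\Vert\hat{\alpha}\Vert$. Multiplying these gives $\Vert C^{-1/2}\Vert\,\Vert C^{1/2}\hat{\alpha}\Vert\ge\Vert\hat{\alpha}\Vert$, so dropping the nonnegative $\Vert C^{-1}\Vert\,\Vert Cm\Vert$ term and the $\tr(C^{-1})$ term yields $L_n(Y,X)\ge\Vert\hat{\alpha}\Vert^2=\Vert\hat{\alpha}\Vert_F^2$.

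For the upper bound, I would apply $(a+b)^2\le 2a^2+2b^2$ and then bound $\Vert C^{1/2}\hat{\alpha}\Vert^2\le\Vert C\Vert\,\Vert\hat{\alpha}\Vert^2$ using submultiplicativity. This gives
\[
 L_n(Y,X) \le 2\Vert C^{-1}\Vert\,\Vert C\Vert\,\Vert\hat{\alpha}\Vert^2 + 2\Vert C^{-1}\Vert^2\Vert Cm\Vert^2 + \tr(C^{-1}),
\]
so one may take $c_1=2\Vert C^{-1}\Vert\,\Vert C\Vert$ and $c_2=2\Vert C^{-1}\Vert^2\Vert Cm\Vert^2+\tr(C^{-1})$, both strictly positive (since $C\in\SPD{qr^2}$) and depending only on the hyperparameters.

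The concluding equivalence $L_n(Y,X)=O_\pr(1)\iff\hat{\alpha}=O_\pr(1)$ then follows immediately from the two-sided bound together with $\Vert\hat{\alpha}\Vert=\Vert\hat{\alpha}\Vert_F$. There is essentially no obstacle here; the only thing to be careful about is tracking which norm inequalities go in which direction and using the identification $\Vert C^{-1/2}\Vert^2=\Vert C^{-1}\Vert$ to cancel against $\lambda_{\min}(C)$ in the lower bound.
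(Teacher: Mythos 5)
Your proof is correct and follows essentially the same route as the paper's: drop the $\Vert C^{-1}\Vert\,\Vert Cm\Vert$ and $\tr(C^{-1})$ terms and use $\Vert C^{-1/2}\Vert^2\,\Vert C^{1/2}\hat{\alpha}\Vert^2 = \emin(C)^{-1}\hat{\alpha}^\tsp C\hat{\alpha} \ge \Vert\hat{\alpha}\Vert^2$ for the lower bound, and $(a+b)^2\le 2a^2+2b^2$ together with $\hat{\alpha}^\tsp C\hat{\alpha}\le\emax(C)\Vert\hat{\alpha}\Vert^2$ for the upper bound. The constants you extract match those implicit in the paper's argument, so there is nothing to add.
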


\begin{proof}
 Since $C^{-1}$ is SPD, the term $\tr(C^{-1})$ is positive, and so dropping it
 and the term $\Vert C^{-1}\Vert \Vert Cm\Vert$ in the expression for $L_n(Y,
 X)$ gives
 \[
  L_n(Y, X) > \Vert C ^{-1/2}\Vert^2 \Vert C^{1/2}\hat{\alpha}\Vert^2 =
  \emin(C)^{-1}\Vert C^{1/2}\hat{\alpha}\Vert^2.
\]
 On the other hand, using that $(r_1 + r_2)^2 \leq 2r_1^2 + 2r_2^2$ for any real
 numbers $r_1$ and $r_2$,
 \[
  L_n(Y, X) \leq 2 \Vert C^{-1}\Vert^2 \Vert Cm\Vert^2 + 2 \Vert C^{-1}\Vert
  \Vert C^{1/2}\hat{\alpha}\Vert^2 + \tr(C^{-1}).
 \]
 Now notice that $C - \emin(C) I_{qr^2}$ and $\emax(C)I_{qr^2} - C$ are both
SPSD, and therefore
 \[
  \emin(C) \Vert \hat{\alpha}\Vert^2 \leq \hat{\alpha}^\tsp C \hat{\alpha} \leq
  \emax(C) \Vert \hat{\alpha}\Vert^2.
 \]
 Thus, since $0 < \emin(C) < \emax(C) < \infty$ and $\hat{\alpha} =
 \vecop(\hat{\Alpha})$, we are done.
\end{proof}

\begin{prop}\label{prop:eps_vanish}
 If, almost surely as $n\to \infty$,
 \[
  n \Vert Q_X Z\Vert^2/\Vert Q_{[Z, X]} Y \Vert^2 \to \infty,
 \]
 then the $\veps = \veps_n$ in Theorem \ref{thm:geo_erg} tends to zero almost
surely as $n\to \infty$. In particular, $\veps_n \to 0$ almost surely if $n^{-1}
Y^\tsp Q_{[Z, X]}Y$ and $n^{-1} Z^\tsp Q_X Z$ have positive definite limits
almost surely.
\end{prop}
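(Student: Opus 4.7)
The plan is to show $\log \veps_n \to -\infty$ almost surely by bounding the determinant ratio inside $\veps_n$ and showing that, even as it tends to $1$, it does so slowly enough to be overwhelmed by the exponent $(n+a-p-r-1)/2$, which grows linearly in $n$. First I would note that the $\veps_n$ of Theorem \ref{thm:geo_erg} comes from applying Theorem \ref{thm:Rosenthal} with some $T_n > 2L_n/(1-\lambda_n)$. Since Lemma \ref{lem:drift} gives $L_n \geq \tr(C^{-1}) > 0$, the quantity $T_n$ is bounded below by the positive constant $T_\ast := 2\tr(C^{-1})$, independent of the data.

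The next step is to control the numerator and denominator separately. Let $u_n := \Vert Q_{[Z,X]}Y\Vert^2$ and $v_n := \Vert Q_X Z\Vert^2$. The PSD ordering $Y^\tsp Q_{[Z,X]}Y \preceq u_n I_r$ together with Lemma \ref{lem:sum_pd_psd} gives $\vert D + Y^\tsp Q_{[Z,X]}Y\vert \leq \prod_{i=1}^r (d_i + u_n)$, where $d_1,\dots,d_r$ are the eigenvalues of $D$. For the denominator, expanding the square and using $\Vert Q_X Y\Vert \geq \Vert Q_{[Z,X]}Y\Vert$ yields $(\Vert Q_X Y\Vert + \Vert Q_X Z\Vert\sqrt{T_n})^2 \geq u_n + v_n T_n$, so $\vert D + I_r(\Vert Q_X Y\Vert + \Vert Q_X Z\Vert\sqrt{T_n})^2\vert \geq \prod_{i=1}^r (d_i + u_n + v_n T_n)$. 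Applying $\log(1-x) \leq -x$ to each of the $r$ resulting factors then gives
\[
 \log \veps_n \leq -\frac{(n+a-p-r-1)\,r}{2}\cdot \frac{v_n T_n}{\Vert D\Vert + u_n + v_n T_n}.
\]

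It remains to show the right-hand side diverges to $-\infty$. Using $T_n \geq T_\ast > 0$, I would argue by cases: if $v_n T_n / u_n$ stays bounded away from $0$, then the fraction above is bounded below by a positive constant and the prefactor of $n$ closes the argument; otherwise $u_n$ dominates $v_n T_n$, the fraction is of the order $v_n T_n/u_n \geq T_\ast v_n/u_n$, and $n v_n/u_n \to \infty$ almost surely by hypothesis. The main obstacle is the borderline regime in which $u_n$ itself tends to zero: then the constant $\Vert D\Vert$ in the denominator can prevent $n v_n T_n/(\Vert D\Vert + u_n + v_n T_n)$ from diverging. This pathological configuration does not arise under the regimes of genuine interest, notably the one treated by the ``in particular'' claim, but handling it in full generality may require a sharper bound on the denominator (for instance, keeping the cross term $2\Vert Q_X Y\Vert\Vert Q_X Z\Vert\sqrt{T_n}$) or an additional hypothesis that rules out degenerate $Y$.

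The ``in particular'' assertion is then immediate: if $n^{-1}Y^\tsp Q_{[Z,X]}Y \to A$ and $n^{-1}Z^\tsp Q_X Z \to B$ almost surely with $A, B$ SPD, then $u_n = \Vert Y^\tsp Q_{[Z,X]}Y\Vert$ and $v_n = \Vert Z^\tsp Q_X Z\Vert$ are both of order $n$ almost surely, so $n v_n/u_n$ is of order $n$ and tends to infinity, placing us in the non-degenerate case of the first part.
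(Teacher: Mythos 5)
Your argument is essentially the paper's own proof: both bound the numerator determinant above by $\prod_j (d_j + \Vert Q_{[Z,X]}Y\Vert^2)$ and the denominator below by $\prod_j (d_j + \Vert Q_{[Z,X]}Y\Vert^2 + T_n\Vert Q_X Z\Vert^2)$ using Lemma \ref{lem:sum_pd_psd} and $\Vert Q_X Y\Vert \geq \Vert Q_{[Z,X]}Y\Vert$, then show the $n$th power of the resulting scalar ratio vanishes; your logarithmic bookkeeping versus the paper's normalization $\zeta_n = \veps_n^{2n/c}$ is cosmetic, and your explicit lower bound $T_n \geq 2\tr(C^{-1})$ makes precise something the paper leaves implicit. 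The borderline regime you flag (where $\Vert Q_{[Z,X]}Y\Vert^2 \to 0$ while $D \neq 0$) is glossed over in the paper as well---its final step dismisses it with the phrase ``since $d_j$ is fixed''---so you have not missed any idea that the authors supply.
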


\begin{proof}
 Recall from Lemma \ref{lem:minor} the definition of $\veps = \veps_n$, $c = n +
a - p - r - 1$, and $c_1 = (\Vert Q_X Y\Vert + \Vert Q_XZ\Vert \sqrt{T})^2$. It
suffices to show that $\zeta_n := \veps_n^{2 n/ c} \to 0$ almost surely since
$2n/c \to 2$. We have
 \[
  \zeta_n = \left[ \frac{\vert D + Y^\tsp Q_{[Z, X]}Y\vert}{\vert D + I_r
  c_1\vert }\right]^n.
 \]
 By Lemma \ref{lem:sum_pd_psd}, $\vert D + Y^\tsp Q_{[Z, X]}Y\vert \leq \vert D
 + I_r \Vert Q_{[Z, X]} Y\Vert^2 \vert$. Thus,
 \begin{align*}
  \zeta_n \leq \left[ \frac{\vert D + I_r \Vert Q_{[Z, X]}Y\Vert^2\vert}{\vert D
  + I_r c_1\vert }\right]^n.
 \end{align*}
 Now using that $c_1 \geq \Vert Q_X Y\Vert^2 + \Vert Q_X Z\Vert^2 T$ and $\Vert
 Q_X Y\Vert \geq \Vert Q_{[Z, X]} Y\Vert$ in another application of Lemma
 \ref{lem:sum_pd_psd},
 \[
  \zeta_n \leq \left[ \frac{\vert D + I_r \Vert Q_{X}Y\Vert^2\vert}{\vert D +
  I_r \Vert Q_XY\Vert^2 + I_r \Vert Q_X Z\Vert^2 T \vert }\right]^n,
 \]
 which can be written as a product of $r$ terms, the $j$th of which is
 \[
  \left(1 + \frac{d_j + T \Vert Q_X Z\Vert^2}{d_j + \Vert Q_{[Z, X]} Y\Vert^2
  }\right)^{-n},
 \]
 where $d_j$ is the $j$th eigenvalue of $D$. Since $r$ is fixed, the product of
 $r$ terms tends to is zero if and only if one of the terms does, which happens
 unless $T \Vert Q_X Z\Vert^2 / \Vert Q_{[Z, X]}Y\Vert^2 = O(1/n)$ since $d_j$
 is fixed.
\end{proof}

\end{document}